\newtheorem{thm}{Theorem}[section]
\newtheorem{lem}[thm]{Lemma}
\newtheorem{proposition}[thm]{Proposition}
\newtheorem{cor}[thm]{Corollary}
\theoremstyle{remark}
\newtheorem{rem}[thm]{Remark}
\newtheorem{example}[thm]{Example}
\newcommand{\bone}{\mathbbm{1}}
\newcommand{\Lg}{\widehat{g}}
\newcommand{\Lu}{\widehat{u}}
\newcommand{\Lv}{\widehat{v}}
\newcommand{\Lw}{\widehat{w}}
\newcommand{\Ltw}{\widetilde{w}}
\newcommand{\field}[1]{{\mathbb{#1}}}
\newcommand{\C}{\field{C}}
\newcommand{\N}{\field{N}}
\newcommand{\R}{\field{R}}
\newcommand{\Fcal}{\mathcal{F}}
\newcommand{\Pcal}{\mathcal{P}}
\newcommand{\bs}{\boldsymbol} 
\newcommand{\bfd}{{\bs d}}
\newcommand{\bfg}{{\bs g}}
\newcommand{\bfh}{{\bs h}}
\newcommand{\bfp}{{\bs p}} 
\newcommand{\bfx}{{\bs x}}
\newcommand{\bfy}{{\bs y}} 
\newcommand{\bfz}{{\bs z}}
\newcommand{\bfP}{{\bs P}}
\newcommand{\bfnu}{{\bs\nu}}
\newcommand{\bfzeta}{{\bs\zeta}}
\newcommand{\dx}{\, \dif \bfx}
\newcommand{\ds}{\, \dif s}
\newcommand{\dt}{\, \dif t}
\newcommand{\dtheta}{\, \dif \theta}
\newcommand{\rmi}{\mathrm{i}} 
\newcommand{\rme}{\mathrm{e}}
\newcommand{\boldsymb}{ }
\newcommand{\cqb}{\boldsymb b}
\newcommand{\cqg}{\boldsymb g}
\newcommand{\cqC}{\boldsymb C}
\newcommand{\cqK}{\boldsymb K}
\newcommand{\cqH}{\boldsymb H}
\newcommand{\cqL}{\boldsymb L}
\newcommand{\cqW}{\boldsymb W}
\newcommand{\cqX}{\boldsymb X}
\newcommand{\cqY}{\boldsymb Y}
\newcommand{\vtn}{\underline{t_n}}
\DeclareMathOperator*{\argmin}{arg\,min}
\DeclareMathOperator{\acosh}{acosh}
\renewcommand{\div}{\mathrm{div}}
\newcommand{\tri}{\bigtriangleup}
\newcommand{\ui}{u^i}
\newcommand{\Lap}{\mathcal{L}}
\numberwithin{equation}{section}
\DeclareMathOperator{\dif}{d\!}  
\DeclareMathOperator{\real}{Re}
\DeclareMathOperator{\Sop}{\mathrm{S}}
\DeclareMathOperator{\Dop}{\mathrm{D}}
\DeclareMathOperator{\Vop}{\mathrm{V}}
\begin{document}

\title{The temporal domain derivative in inverse acoustic obstacle scattering} 
\author{Marvin
  Kn\"oller\footnote{Department of Mathematics and Statistics, University of Helsinki,
    00014 Helsinki, Finland
    {\tt marvin.knoller@helsinki.fi}} \; and\;
 J\"org Nick\footnote{Seminar f\"ur Angewandte Mathematik, ETH Z\"urich, CH-8092 Z\"urich, Switzerland {\tt  joerg.nick@math.ethz.ch}}}

\maketitle

\begin{abstract}
 This work describes and analyzes the domain derivative for a time-dependent acoustic scattering problem.
We study the nonlinear operator that maps a sound-soft scattering object to the solution of the time-dependent wave equation evaluated at a finite number of points away from the obstacle.
The Fr\'echet derivative of this operator with respect to variations of the scatterer coincides with point evaluations of the temporal domain derivative. The latter is the solution to another time-dependent scattering problem, for which a well-posedness result is shown under sufficient temporal regularity of the incoming wave.
Applying convolution quadrature to this scattering problem gives a stable and provably convergent semi-discretization in time, provided that the incoming wave is sufficient regular. 
Using the discrete domain derivative in a Gauss--Newton method, we describe an efficient algorithm to reconstruct the boundary of an unknown scattering object from time domain measurements in a few points away from the boundary. Numerical examples for the acoustic wave equation in two dimensions demonstrate the performance of the method. 
 \end{abstract}

{
\small\noindent
  Mathematics subject classifications (MSC2020): 
  35R30, 
 65N21 
  \\\noindent 
  Keywords: inverse scattering, wave equation, temporal domain derivative, convolution quadrature
  \\\noindent
Short title: The temporal domain derivative in inverse scattering
}

\section{Introduction}
Understanding the effects of boundary perturbations on measurements of a solution to a partial differential equation is a key difficulty in many applications, such as inverse scattering and shape optimization. When the magnitude of boundary perturbation becomes small, their impacts are, at leading order, described by the domain derivative.
The domain derivative is typically 
expressed as a solution to a partial differential equation that leaves the original equation in the relevant domains unaffected, whereas boundary or transmission conditions change to a term that depends on the solution of the unperturbed problem and the
perturbation's normal component.
In the context of time-harmonic wave phenomena, an extensive literature studying the domain derivative has been developed in the last decades, starting from the pioneering work about sound-soft scattering for the Helmholtz equation in \cite{Kirsch93}. 
Afterwards, different boundary conditions for the scatterer were studied, such as e.g.\@ Robin boundary conditions in \cite{Hett95},\cite{Het98e}, nonlinear impedance boundary conditions in \cite{FiHe24} or transmission conditions in \cite{Hett95, Hett99}. 
An efficient reconstruction method for three-dimensional sound-soft obstacles based on domain derivatives has been proposed in \cite{HaHo07}.
Domain derivatives for the time-harmonic Maxwell's equations featuring the perfect conductor boundary condition as well as transmission boundary conditions were studied in \cite{Hag19, Hagetal19, Het12}.
A different approach in developing domain derivatives for scattering problems is to represent waves using single and double layer potentials and to derive these representations with respect to variations of the domain.
For the time-harmonic Helmholtz equation this was done in \cite{HadKre04, HoScho98, Pott94, Pott96}, for time-harmonic Maxwell's equations see \cite{CosLou12, HadKre04, Pott96b}.
\begin{figure}[tbp]
    \centering
    \begin{minipage}{0.34\textwidth}
        \centering
        \includegraphics[width=\linewidth]{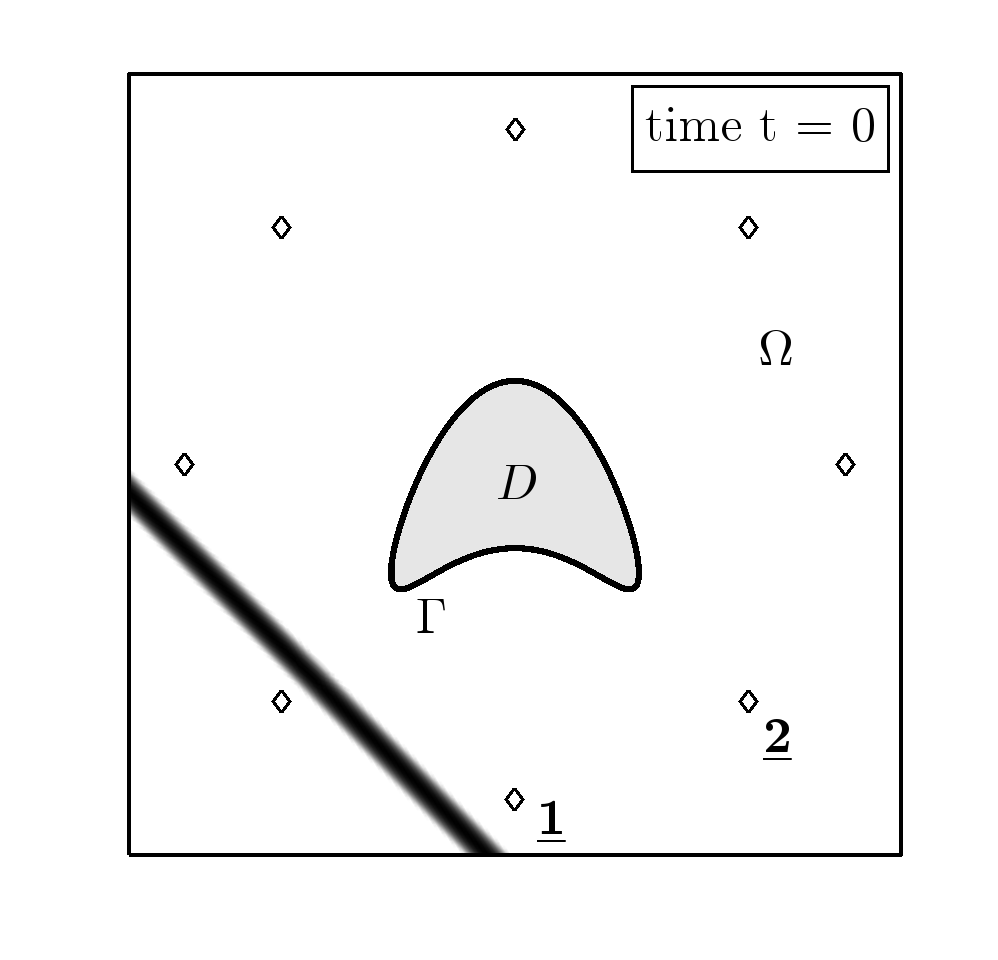}
    \end{minipage}
    \begin{minipage}{0.34\textwidth}
        \centering
        \includegraphics[width=\linewidth]{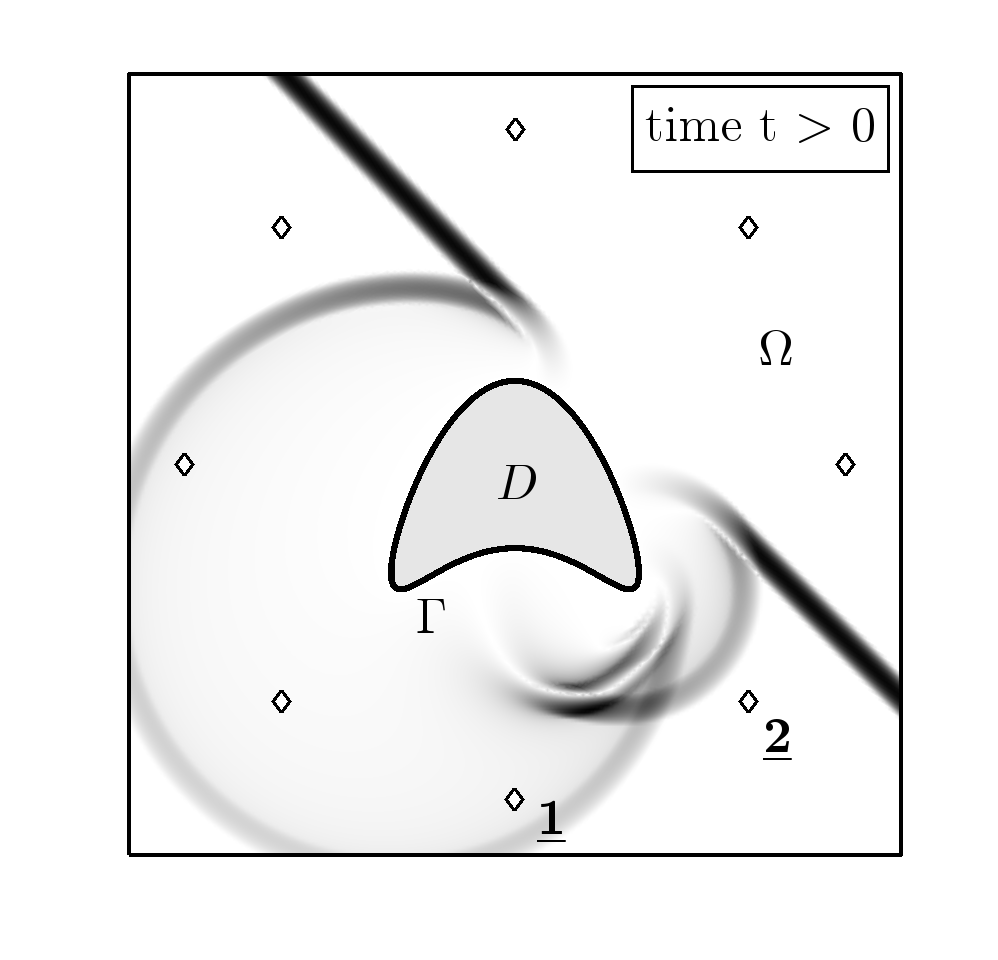}
    \end{minipage}
    \begin{minipage}{0.305\textwidth}
        \centering
        \includegraphics[width=\linewidth]{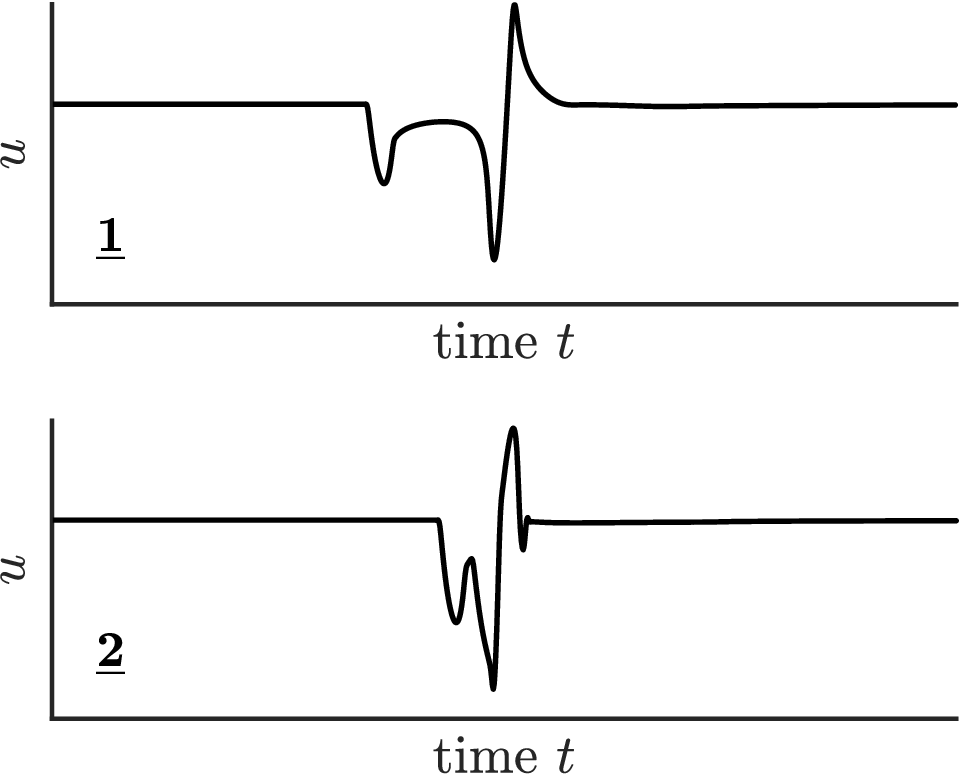}
    \end{minipage}
    \caption{Scattering from a kite-shaped obstacle $D$. In the left and middle plot the solid black strip represents the support of the incoming wave $\ui$. Diamonds represent receivers that detect the scattered wave $u$. Visualizations of $u$ at the receivers $1$ and $2$ are found in the right plot.}
    \label{fig:sketch}
\end{figure}
In this paper, we study a well-posedness result of the time-dependent domain derivative related to the exterior sound-soft scattering problem. 
In order to introduce this problem, let $\Omega = \R^d \setminus \overline{D}$ for the dimensions $d\in \{2,3\}$ denote the complement of a bounded scatterer $D$, having a $C^2$ boundary $\Gamma =\partial\Omega $. 
The acoustic wave equation in the exterior connected domain $\Omega$ in a finite time interval $[0,T]$ with final time $T>0$ reads
\begin{align}\label{eq:scatwave-1}
	\partial_t^2 u - \Delta u \, = \, 0 \quad &\text{in } \Omega \times [0,T] \, .
\end{align}
Let $\ui$ be an incident wave, which fulfills the acoustic wave equation in the entire space $\R^d$ and for all times. Furthermore, the support of $\ui$ at time $t=0$ shall not intersect with the boundary of the obstacle $\Gamma$. 
This condition guarantees vanishing initial conditions for the scattered wave $u$ and its derivatives.
The boundary condition formulated for the scattered wave $u$ in presence of a sound-soft scattering object reads
\begin{align}\label{eq:scatwave-2}
	u(\bfx,t) \, = \, -\ui(\bfx,t) \quad &\text{on } \Gamma \times [0,T]\, .	
\end{align}
A sketch of the scattering problem is found in Figure~\ref{fig:sketch}. The initial support of the incoming wave $\ui$ is visualized as the solid black strip that is found in the left plot. As time proceeds, $\ui$ impinges on the obstacle and produces the scattered wave $u$ seen in the middle plot. Receivers that measure the scattered wave are indicated by diamonds. Visualizations of the measurements at the receivers $1$ and $2$ are found in the right plot.

We understand the direct problem as the task of computing the scattered wave $u$ at selected points in $\Omega$, given the incident wave $\ui$ and the obstacle $D$.
Regarding Figure~\ref{fig:sketch}, this refers to the determination of $u$ at the receivers, as depicted in the right plot.
In contrast, the inverse problem that we study consists of the challenge to reconstruct the scattering object $D$ from given measurements of the scattered wave $u$ at some receivers in $\Omega$ and given knowledge on the incoming wave $\ui$. 

For shape identification methods in the time-domain,
qualitative methods for the wave equation such as the linear sampling and factorization method were studied in \cite{CakHadLech19, CakMonSel21, ChenHadLechMon10, GuoMonCol13, HadLechMar14, HadLiu20}.
For time-dependent Maxwell's equations a linear sampling method has been studied in \cite{LaMonSel22}.
Recently, efforts have been made to use iterative shape reconstruction methods for time-dependent scattering problems.
In \cite{ZhaoDongMa21} and \cite{ZhaoDongMa22} shape reconstructions are considered for the acoustic wave equation and the elastic wave equation, respectively. 
The methods presented in these works use a discrete number of frequencies, which are generated from the convolution quadrature method to formulate several frequency-domain scattering problems.
Afterwards, frequency-domain boundary integral operators are linearized with respect to the two-dimensional boundary's parametrization to construct an iterative domain reconstruction method. At its core, the initial time-domain scattering problem is replaced by several frequency-domain problems.

In order to derive the temporal domain derivative we here apply the Laplace transform to the wave equation \eqref{eq:scatwave-1} and utilize results similar to those, which were established for the time-harmonic Helmholtz equation in \cite{Kirsch93}.
Tracking dependencies on powers of the complex-valued wave number and the norm of the incoming wave, an inverse Laplace transform provides requirements on the time regularity of the incoming time-dependent wave necessary to guarantee the existence of the temporal domain derivative.
Moreover, we conclude that the temporal domain derivative is the unique solution to a time-dependent scattering problem featuring a boundary condition inherited from the Laplace domain.
Similar formulations for this time-dependent problem are also found in the literature as shape derivatives to wave equations (see \cite{CagZol98, CagZol99, SokZol92}).
Discretizing this scattering problem yields numerical schemes for the temporal domain derivative. 
We discretize this scattering problem in time by the convolution quadrature method based on Runge--Kutta multistage methods. 

We understand the inverse shape reconstruction problem as the determination of the boundary $\Gamma$ from several point evaluations of the corresponding time-dependent scattered wave.
The temporal domain derivative provides the Fr\'echet derivative of the functional corresponding to point evaluations of the scattered wave.
Moreover, by using the framework of temporal convolution operators and boundary integral equations, we obtain a well-posedness result for the discretization in time for this Fr\'echet derivative. 
Finally, we set up a Gau\ss--Newton method for the iterative reconstruction of a two-dimensional sound-soft scattering object and provide numerical examples.

The paper is structured in the following way.
In Section~2, we recall the Laplace domain setting of the scattering problem. 
Crucial frequency explicit estimates for the solution to the scattering problem are shown in Proposition~\ref{prop:H2-bound} and subsequently for the domain derivative in Proposition~\ref{prop:th-dom-deriv-bounds} and Proposition~\ref{prop:th-domain-derivative}. In Section~3, we use these results to obtain well-posedness results and bounds for the temporal domain derivative, formulated in Theorem~\ref{th:td-dd-bound}. Subsequently in Section~4, a convolution quadrature time discretization based on a Runge--Kutta time stepping method is employed, which yields a time-discrete approximation to the domain derivative. 
General convolution quadrature approximation results and the bounds for the Laplace domain problem of Section~2 give the error estimates formulated in Theorem~\ref{th:cq-conv}.
In the last section, we use the temporal domain derivative in a Gau\ss--Newton method for reconstructing two-dimensional scattering objects. Numerical experiments demonstrate the feasibility of the method for different configurations of the scattering problem.

\section{Frequency dependent estimates in the Laplace domain}
The wave equation \eqref{eq:scatwave-1} together with the Dirichlet boundary condition \eqref{eq:scatwave-2} has a unique solution. The time and space regularity of $u$ depends on the regularity of the incoming wave on the boundary. Precise formulations of these regularities require adequate Sobolev spaces and thus, discussions on the well-posedness of \eqref{eq:scatwave-1}-\eqref{eq:scatwave-2} are found in the next chapter, more precisely, in Corollary~\ref{cor:wp}.

Our study starts in the Laplace domain, in which we derive frequency explicit results applicable for the Helmholtz equation - the frequency domain pendant to the acoustic wave equation.
For any causal function $g$, we define the Laplace transform $\mathcal{L}$ as
\begin{align*}
\mathcal{L}\{g\} (s) \, := \, \int_0^\infty \rme^{-st} g(t) \dt \, \quad \text{for any } s \in \C_+ \, := \, \{z \in \C \, | \, \real z > 0 \} \, .
\end{align*}
The Laplace transform of temporal solutions of the acoustic wave equation constitute a solution of the Helmholtz equation in the exterior domain $\Omega = \R^d\setminus \overline{D}$. For $s\in\mathbb C$ with $\real s > 0 $, the formulation for the 
scattered field $\Lu=\Lap\{u\}$ in the Laplace domain reads
\begin{subequations}\label{eq:HHeq}
\begin{align}
 s^2 \Lu -\Delta\Lu \,=\, 0  \quad &\text{in } \Omega \label{eq:HHeq1} \, ,
 \\
 \Lu \, = \, -\Lu^i \quad &\text{on } \Gamma\, ,  \label{eq:HHeq2}
\end{align}
 \end{subequations}
where $\Lu^i = \Lap\{\ui\}$ is the Laplace transform of the incident field. Since $\real s>0$, it suffices to demand an integrability condition for the uniqueness of the formulation, such as $\widehat u(s)\in H^1(\Omega).$ Alternatively, the strong formulation can be completed by an appropriate asymptotic condition for $|\bfx|\rightarrow\infty$. Both $\Lu$ and $\Lu^i$ depend on $s\in \C_+$ and $\bfx \in \Omega$. 
\begin{rem}[Notational convention]
    Throughout the manuscript, we denote functions that are Laplace transforms of a temporal function of interest with a hat (such as $\widehat u$) and omit the omnipresent argument $s$. In contrast, operators in the Laplace domain are denoted by capital letters and are accompanied by the Laplace domain argument $s$, such as $\Sop(s)$, as is typical in view of the Heaviside notation of operational calculus.
\end{rem}
\subsection{Potential and boundary integral operators} 
We recall the potential and boundary operators associated to the Helmholtz problem, as described for example in \cite{BanSay22, Say16}.
First, we introduce the fundamental solution $\Phi_s(\bfx,\bfy)$ given by
\begin{align*}
\Phi_s(\bfx,\bfy) \, := \, 
\begin{dcases*}
\frac{\rmi}{4} H_0^{(1)}(\rmi s |\bfx - \bfy|)\, , \quad &d = 2\,, \\
\frac{\rme^{-s |\bfx-\bfy|}}{4\pi |\bfx-\bfy|} \, , \quad &d = 3\,,
\end{dcases*}
\qquad \text{for } \bfx, \bfy \in \R^d\, ,\; \bfx \neq \bfy \, , 
\end{align*}
where $H_0^{(1)}$ denotes the Hankel function of the first kind and order $0$.
The single-layer potential is then defined by $\Sop(s): H^{-1/2}( \Gamma) \to H_\Delta^{1}(\Omega)$ with
\begin{align*}
	(\Sop(s)\varphi)(\bfx) \, := \, \int_{\Gamma} \Phi_s(\bfx,\bfy) \varphi(\bfy) \ds(\bfy) \, , \quad \bfx \in \Omega \, ,
\end{align*}
where $ H_\Delta^{1}(\Omega) = \{ u \in H^1(\Omega) \, : \, \Delta u \in L^2(\Omega) \}$. The double-layer potential is defined as \newline $\Dop(s) :H^{1/2}(\Gamma) \to H_\Delta^1(\Omega)$ with
\begin{align*}
	(\Dop(s)\psi)(\bfx) \, := \, \int_{\Gamma} \partial_{\bfnu_{\bfy}}
	\Phi_s(\bfx,\bfy) \psi(\bfy) \ds(\bfy) \, , \quad \bfx \in \Omega\, ,
\end{align*}
where $\bfnu_{\bfy}$ denotes the exterior unit normal vector at $\bfy\in \Gamma$. 

Any solution $\widehat u$ to the Helmholtz equation \eqref{eq:HHeq1} fulfills the representation formula 
\begin{align}\label{eq:representation}
	\Lu(\bfx)
\, = \,  - \left(\Sop(s) \partial_\bfnu \Lu\right)(\bfx) +\left(\Dop(s)  \gamma \Lu\right)(\bfx) \, , \quad \bfx \in \Omega \, ,
\end{align}
which is the Laplace domain pendant to Kirchhoff's formula (see \cite[Sec.\@ 4.8]{BanSay22}),
where $\gamma \Lu$ and $\partial_\bfnu \Lu$ denote the trace from $H^1(\Omega)$ to $H^{1/2}(\Gamma)$ and the normal trace from $H_\Delta^1(\Omega)$ to $H^{-1/2}(\Gamma)$, respectively.
We define $H_0^1(\Omega) = \{u \in H^1(\Omega) \, : \, \gamma u = 0\}$.
The boundary integral operator associated to $\Sop(s)$ is given by $\Vop(s): H^{-1/2}(\Gamma) \to H^{1/2}( \Gamma)$ with
\begin{align*}
	\left(\Vop(s)\varphi\right)(\bfx) \, &:= \, \int_{\Gamma} \Phi_s(\bfx,\bfy) \varphi(\bfy) \ds(\bfy)\, , \quad \bfx \in \Gamma \, .
\end{align*}
Problem \eqref{eq:HHeq} is uniquely solved by a single-layer approach as outlined below. Suppose that 
	$\Lu(\bfx) = (\Sop(s)\widehat{\varphi})(\bfx)$ for $\bfx \in \Omega$ and
for a density $\widehat{\varphi} \in H^{-1/2}(\Gamma)$ yet to be determined.
Applying the trace operator $\gamma$ to both sides and using the boundary condition \eqref{eq:HHeq2} gives that
	$-\Lu^i(\bfx) \,=\, (\Vop(s)\widehat{\varphi})(\bfx)$ for $\bfx \in \Gamma$. 
By the coercivity of the single-layer operator, it follows that $\Vop^{-1}(s): H^{1/2}(\Gamma) \to H^{-1/2}(\Gamma)$ is bounded and 
\begin{align}\label{bound-V-m-1}
	\left\Vert \Vop^{-1}(s) \right\Vert_{H^{-1/2}(\Gamma)\leftarrow H^{1/2}(\Gamma)} \, \leq \, \frac{C_\sigma}{\real s} |s|^2\, , \quad \real s \geq \sigma > 0 \, 
\end{align}
(see \cite[Thm.\@ 4.6]{BanSay22}).
Thus, the density $\widehat{\varphi}$ may be represented via $\widehat{\varphi} = -\Vop^{-1}(s)\gamma\Lu^i$. We obtain the solution $\Lu$ by using this density in the representation from above, i.e.
\begin{align}\label{eq:SV-1-th}
	\Lu(\bfx) \,=\, -(\Sop(s)\Vop^{-1}(s)\gamma\Lu^i)(\bfx)\, , \quad \bfx \in \Omega \, .
\end{align}
In our special case, in which we assume $\Lu^i|_D$ to be a solution to the Helmholtz equation, i.e., $ s^2 \Lu^i - \Delta\Lu^i  = 0$ in $D$, the density $\widehat{\varphi}$ possesses a special representation, namely $\widehat{\varphi} = -\partial_\bfnu(\Lu + \Lu^i)$. To see this, we first apply Green's formula to $\Lu^i$ and afterwards to $\Phi_s(\bfx,\cdot)$, what shows that 
\begin{align}\label{eq:rep2}
	- (\Sop(s) \partial_\bfnu \Lu^i)(\bfx) +(\Dop(s)  \gamma \Lu^i)(\bfx)
\, = \, 0  \, , \quad \bfx \in \Omega \, 
\end{align}
(cf. \cite[Thm.\@ 5.39]{KiHe15}). Adding \eqref{eq:representation} and \eqref{eq:rep2} and using \eqref{eq:HHeq2} yields that
\begin{align}\label{eq:reptotal}
-\left( \Sop(s) \left( \partial_\bfnu (\Lu + \Lu^i)\right)\right)(\bfx) \, = \, \Lu(\bfx) \quad \text{for } \bfx \in \Omega\,.
\end{align}
Applying the trace operator $\gamma$ to both sides of \eqref{eq:reptotal} yields that 
\begin{align}\label{eq:obtNeumann}
\Vop(s) \left( \partial_\bfnu (\Lu + \Lu^i)\right) \, = \, \gamma \Lu^i \, \quad\text{or equivalently, } \;   -\partial_\bfnu (\Lu + \Lu^i) \, = \, -\Vop^{-1}(s)\gamma \Lu^i \, = \, \widehat{\varphi} \, .
\end{align}
For $\Lu$ as defined in \eqref{eq:SV-1-th}, it holds that (see also \cite[Prop.\@~1]{Bam86a})
\begin{align}\label{eq:wpin}
	\left\Vert \Lu \right\Vert_{|s|,\Omega} \, := \, \left(\left\Vert \nabla \Lu \right\Vert_{L^2(\Omega)}^2 +  \left\Vert s\Lu \right\Vert_{L^2(\Omega)}^2\right)^{1/2}  \, \leq \, \frac{C_\sigma}{\real s} |s|^{3/2} \left\Vert \gamma\Lu^i \right\Vert_{H^{1/2}(\Gamma)} \,, \quad \real s \geq \sigma > 0\, .
\end{align}
The norm $\Vert \cdot \Vert_{|s|,\Omega}$ is equivalent to the natural norm on $H^1(\Omega)$, since (see also \cite[Sec.\@ 4.4]{BanSay22})
\begin{align}\label{eq:equiv}
\min\{1,|s|\} \Vert \Lu \Vert_{H^1(\Omega)} \, \leq \, \Vert \Lu \Vert_{|s|,\Omega} \, \leq \, \max\{1,|s|\} \Vert \Lu \Vert_{H^1(\Omega)} \, .
\end{align}
The scalar product that induces $\Vert \cdot \Vert_{|s|,\Omega}$ is given by
\begin{align}\label{eq:skpS}
\langle \Lu,\Lv \rangle_{|s|,\Omega} \, := \, \langle \nabla \Lu, \nabla \Lv \rangle_{L^2(\Omega)} + |s|^2\langle \Lu, \Lv \rangle_{L^2(\Omega)} \, .
\end{align}
In this work we require pointwise bounds on the scattered wave $\Lu$, which follow from applying a duality argument directly to the integral form of the potential operators. This is found in \cite[Lem.\@ 7]{BLM11} for the single layer operator in $d=3$. The proof is directly applicable to the double-layer potential for $d=3$. 
For dimension $d=2$ the proof can be done similarly by noting that $(H_0^{(1)})'= -H_1^{(1)}$ and using the estimate (see e.g.\@ \cite[Eq.\@ 9.2.3]{AbSte64})
\begin{align*}
|H_\nu^{(1)}(\rmi s |\bfx-\bfy|)| \, \leq \, C  |s|^{-1/2}|\bfx-\bfy|^{-1/2} \rme^{-\real s |\bfx-\bfy|} \quad \text{for } s \in \C_+ \text{ with } \real s \geq \sigma\, \; \text{and fixed } \nu\, ,
\end{align*}
with $\bfx \in \Omega$ and $\bfy \in D$.
We therefore obtain the estimates
\begin{subequations} \label{eq:pwbound}
\begin{align}
\left| (\Sop(s)\varphi) (\bfx) \right| &\, \le \, C(\sigma,\mathrm{dist}(\bfx,\Gamma))|\left|s\right|^{{(d-1)}/{2}} e^{-\mathrm{dist}(\bfx,\Gamma)\real s} \left\Vert \varphi \right\Vert_{H^{-1/2}(\Gamma)} \, ,
&&\varphi \in H^{-1/2}(\Gamma)\, ,\label{eq:pwbound1}\\
 \left| (\Dop(s)\psi) (\bfx) \right| & \, \le \, C(\sigma,\mathrm{dist}(\bfx,\Gamma))\left|s\right|^{{(d-1)}/{2}} e^{-\mathrm{dist}(\bfx,\Gamma)\real s} \left\Vert \psi \right\Vert_{H^{1/2}(\Gamma)} \, ,  &&\psi \in H^{1/2}(\Gamma)\, ,
\end{align}
\end{subequations}
where the constant $C(\sigma,\mathrm{dist}(\bfx,\Gamma))$ depends on $\sigma^{-1}$ and on $\mathrm{dist}(\bfx,\Gamma)^{-(d-1)/2}$ for both dimensions $d=2,3$.

We study the frequency dependence of $H^2$ solutions to $s^2 \Lu - \Delta \Lu =0$ in the next proposition. 
The proof can be done as the proof of \cite[Thm.\@ 8.8, Thm.\@ 8.12]{GilTru77} by additionally tracking dependencies on powers of $|s|$.
\begin{proposition}\label{prop:H2-bound}
Let $D$ be a bounded $C^2$-domain and let $\Omega = \R^d\setminus \overline{D}$ with boundary $\partial \Omega=\Gamma$. Further, let $\Lg \in H^{3/2}(\Gamma)$ and let $\mathcal{O} \in \{D,\Omega\} $.
Then, the unique solution $\Lv\in H^1(\mathcal{O})$ of
\begin{subequations}\label{eq:HHeqg}
\begin{align}
 s^2 \Lv-\Delta\Lv\,=\, 0  \quad &\text{in } \mathcal{O} \, ,
 \\
 \Lv \, = \, \Lg \quad &\text{on } \Gamma\, 
\end{align}
\end{subequations}
is also in $H^2(\mathcal{O})$ and satisfies
\begin{align}\label{eq:propRes}
\Vert \Lv \Vert_{H^2(\mathcal{O})} \, \leq \, C_\sigma \frac{|s|^{5/2}}{(\real s)^{1/2}} \Vert \Lg \Vert_{H^{3/2}(\Gamma)} \, , \quad \real s \geq \sigma > 0\,
\end{align}
with a constant $C_\sigma>0$ that does not depend on the frequency $s$.
\end{proposition}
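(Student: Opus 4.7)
The strategy combines a frequency-explicit $H^1$-type estimate for $\Lv$ with classical elliptic $H^2$-regularity, tracking the $|s|$-dependence at each step.

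The first step is to establish, for both $\mathcal{O} \in \{D,\Omega\}$, the bound
\begin{equation*}
  \|\Lv\|_{|s|, \mathcal{O}} \,\le\, \frac{C_\sigma |s|^{3/2}}{\real s}\, \|\Lg\|_{H^{3/2}(\Gamma)}\,.
\end{equation*}
For $\mathcal{O} = \Omega$ this is \eqref{eq:wpin}, applied with $\Lg$ in place of $-\gamma \Lu^i$, together with the continuous inclusion $H^{3/2}(\Gamma) \hookrightarrow H^{1/2}(\Gamma)$. For $\mathcal{O} = D$, the same single-layer ansatz $\Lv = \Sop(s)\Vop^{-1}(s)\Lg$ represents the unique $H^1(D)$-solution of the interior problem (unique solvability follows from coercivity of $\Vop(s)$ for $\real s > 0$), and the derivation of \eqref{eq:wpin} carries over verbatim: it depends only on \eqref{bound-V-m-1} and on $|s|$-norm continuity of $\Sop(s)$, both available when the potential is evaluated in $D$. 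From the definition of $\|\cdot\|_{|s|,\mathcal{O}}$ I then extract the component estimates
\begin{equation*}
  \|\Lv\|_{L^2(\mathcal{O})} \le \tfrac{1}{|s|}\|\Lv\|_{|s|, \mathcal{O}}, \qquad \|\Lv\|_{H^1(\mathcal{O})} \le C_\sigma \|\Lv\|_{|s|, \mathcal{O}} \quad \text{for } \real s \ge \sigma.
\end{equation*}

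The second step is to upgrade to $H^2$. Rewriting \eqref{eq:HHeqg} as the Poisson problem $-\Delta \Lv = -s^2\Lv$, with right-hand side in $L^2(\mathcal{O})$ and Dirichlet datum $\Lg \in H^{3/2}(\Gamma)$, classical elliptic $H^2$-regularity, as in \cite[Thm.\@ 8.8, Thm.\@ 8.12]{GilTru77}, yields
\begin{equation*}
  \|\Lv\|_{H^2(\mathcal{O})} \,\le\, C\bigl( \|\Lv\|_{H^1(\mathcal{O})} + |s|^2 \|\Lv\|_{L^2(\mathcal{O})} + \|\Lg\|_{H^{3/2}(\Gamma)} \bigr),
\end{equation*}
with a constant $C$ depending only on the $C^2$-geometry of $\Gamma$ and, crucially, independent of $s$. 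The dominant contribution on the right is $|s|^2\|\Lv\|_{L^2(\mathcal{O})} \le |s|\,\|\Lv\|_{|s|,\mathcal{O}} \le \tfrac{C_\sigma |s|^{5/2}}{\real s}\|\Lg\|_{H^{3/2}(\Gamma)}$, hence $\|\Lv\|_{H^2(\mathcal{O})} \le \tfrac{C_\sigma |s|^{5/2}}{\real s}\|\Lg\|_{H^{3/2}(\Gamma)}$. The stated form \eqref{eq:propRes} is then an immediate consequence of the elementary inequality $\tfrac{1}{\real s} \le \tfrac{1}{\sigma^{1/2}(\real s)^{1/2}}$, valid whenever $\real s \ge \sigma$.

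The main obstacle I anticipate is verifying the $s$-independence of the constant $C$ in the Gilbarg--Trudinger bound. Their proof proceeds by Nirenberg difference quotients in a finite $C^2$-atlas of $\Gamma$, combined with interior regularity on the complement; because the Helmholtz term $s^2\Lv$ has been absorbed into the right-hand side, every step of the difference-quotient argument involves only the Laplacian and the chart data, and no hidden $s$-factor enters the multiplicative constant. For the unbounded exterior case $\mathcal{O} = \Omega$, the $H^1$-regularity of $\Lv$ provides the decay at infinity needed to patch the boundary-layer estimate (Theorem~8.12) with the interior regularity result (Theorem~8.8) into a global $H^2$-bound via a partition of unity. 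The work is thus one of careful bookkeeping through \cite{GilTru77} rather than any substantively new estimation.
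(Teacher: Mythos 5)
Your proposal is correct and follows essentially the same route as the paper: both reduce the problem to the frequency-explicit $H^1$-bound \eqref{eq:wpin} (valid for interior and exterior via the single-layer ansatz) and then push through the Gilbarg--Trudinger $H^2$-regularity argument while tracking powers of $|s|$, with the zeroth-order term $s^2\Lv$ treated as an $L^2$ right-hand side. The only cosmetic difference is that the paper first lifts the boundary datum to $\varphi=\eta\Lg\in H^2(\mathcal{O})$ and applies the homogeneous-data regularity result to $\Lv+\varphi\in H_0^1(\mathcal{O})$, whereas you invoke the inhomogeneous-Dirichlet form of the estimate directly; the resulting power bookkeeping is identical.
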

\begin{proof}
We only give a sketch of the proof.
Due to our assumption that $D$ is a $C^2$ domain, the trace operator $\gamma : H^2(\mathcal{O}) \to H^{3/2}(\Gamma)$ has a bounded right inverse denoted by $\eta : H^{3/2}(\Gamma) \to H^2(\mathcal{O})$. Since we assume $\Lg$ to be in $H^{3/2}(\Gamma)$, the function $\varphi := \eta \Lg\in H^2(\mathcal{O})$ is well-defined. Moreover, we define the function $\widehat{w}:= \widehat{v} + \varphi \in H_0^1(\mathcal{O})$, where $\Lv \in H^1(\mathcal{O})$ is the unique weak solution to \eqref{eq:HHeqg}.
Proceeding as in the proofs of \cite[Thm.\@ 8.8, Thm.\@ 8.12]{GilTru77} but tracking dependencies on powers of $|s|$ yields that $\widehat{w} \in H^2(\mathcal{O})$ and 
\begin{align}\label{eq:giltru}
\Vert \widehat{w}\Vert_{H^2(\mathcal{O})} \, \leq \, C
\left(\max\{1,|s|\} \Vert \widehat{w}\Vert_{|s|,\mathcal{O}}+ 
\max\{1,|s|^2\}\Vert \varphi \Vert_{H^2(\mathcal{O})}\right)\,
\end{align}
with a constant $C>0$ that does not depend on the frequency $s$. The $s$-dependent norm on the right hand side of
\eqref{eq:giltru} is defined in \eqref{eq:wpin}. 
Using the estimate in \eqref{eq:wpin} with $\gamma \Lu^i$ replaced by $\Lg$ and the boundedness of $\eta$, i.e., $\Vert \varphi \Vert_{H^2(\mathcal{O})} \leq C\Vert \Lg \Vert_{H^{3/2}(\Gamma)}$ yields the estimate \eqref{eq:propRes}.
\end{proof}

\subsection{The domain derivative in the Laplace domain}\label{sec:th-dom-deriv}
Throughout the paper, for two open and bounded sets $D_1,D_2 \subset \R^d$ the notation $D_1 \subset \subset D_2$ means that $D_1 \subset D_2$ and $\mathrm{dist}(D_1,\partial D_2)>0$.
For the bounded $C^2$ domain $D$ we consider a vector field $\bfh \in C^1(\Gamma,\R^d)$ that is supposed to deform $\partial D$. 
Let $R>0$ be sufficiently large such that $D \subset\subset B_R(0)$. Then there exists an extension of $\bfh$ supported in a neighborhood of $\Gamma$, again denoted by $\bfh$, such that $\Vert \bfh \Vert_{C^1(B_R(0))} \leq C \Vert \bfh \Vert_{C^1(\Gamma)}$ for some $C>0$ (see e.g.\@ \cite[Thm.\@ 1.5]{Ho99}).
For sufficiently small $\Vert \bfh \Vert_{C^1(\Gamma)}$, the deformation
\begin{align}\label{eq:phi-diff}
	\varphi: B_R(0) \to B_R(0), \qquad \varphi(\bfx) \, := \, \bfx + \bfh(\bfx)
\end{align}
is a diffeomorphism and
maps $D$ to $D_{\bfh} := \varphi(D)$.
A variation of the boundary of the scattering object $\Gamma$ by $\bfh$ via $\Gamma_\bfh = (I+\bfh)\Gamma$ affects the solution of the Helmholtz equation \eqref{eq:HHeq} through the perturbation of its underlying geometry.
Let $\bfz_j \in \R^d$ for $j=1,\dots,M$ be some discrete observation points located away from all scattering objects under consideration.
For a fixed incoming wave $\Lu^i$, we define the nonlinear boundary-to-solution operator $\widehat{X}: \Gamma \mapsto (\Lu(\bfz_j))_{j=1,\dots,M}$, where $\Lu$ solves \eqref{eq:HHeq}.
By $D(\widehat{F}_\Gamma)$ we denote a neighborhood of the zero function in $C^1(\Gamma,\R^d)$ that is so small that
\begin{align}\label{eq:domainder}
\widehat{F}_\Gamma : D(\widehat{F}_\Gamma) \subset C^1(\Gamma, \R^d) \to \C^M\, , \qquad \widehat{F}_\Gamma(\bfh) \, := \, \widehat{X}(\Gamma_\bfh) \, 
\end{align}
is well-defined.
The next proposition, which is similar to \cite[Thm.\@ 2.1]{Kirsch93}, guarantees the existence of the Fr\'echet derivative $\widehat{F}_\Gamma'(0) : C^1(\Gamma,\R^d) \to \C^M$ and furthermore, provides a characterization in terms of a solution to the Helmholtz equation. The proof that we provide here follows the proofs of \cite[Thm. 2.1]{Kirsch93} and \cite[Ch.\@ 2.2]{Hett99}. Here, however, we perform estimates explicit in terms of powers of $|s|$ and the norm of the incident wave $\ui$ as we require this in our analysis later on. Moreover, we do not have to truncate the domain as in the proof of \cite[Thm. 2.1]{Kirsch93} since the functions that we have to deal with are globally in $H^1$.

\begin{proposition}\label{prop:th-dom-deriv-bounds}
	Let $D$ be a bounded $C^2$-domain, $\partial D = \Gamma$ and let $\Lu\in H^2(\Omega)$ be the unique solution of \eqref{eq:HHeq}. 
	Then, the operator $\widehat{F}_\Gamma$ from \eqref{eq:domainder} is Fr\'echet differentiable at zero and $\widehat{F}_\Gamma'(0)\bfh$ is given by the solution $\Lu'\in  H^1(\Omega)$ of
	\begin{align}\label{eq:HHUp}
		\begin{split}
			 s^2 \Lu'-\Delta \Lu' \,=\, 0  \quad &\text{in } \Omega \\
			\Lu' \, = \, -\left(\bfh \cdot \bfnu\right)\partial_\bfnu (\Lu+\Lu^i) \quad &\text{on } \Gamma,
		\end{split}
	\end{align}
	evaluated at $\bfz_j$, $j=1,\dots,M$, i.e.\@ $\widehat{F}'_{\Gamma}(0) \bfh = \left(\Lu'(\bfz_j)\right)_{j=1,\cdots,M}$. The function $\Lu'$ is called the domain derivative in the Laplace domain.
	Moreover, let $h_0>0$ and let $D_0\subset \R^d$ be a bounded domain such that $D_\bfh \subset \subset D_0$ for all $\Vert \bfh \Vert_{C^1(\Gamma)}\leq h_0$ and $\bfz_j \notin D_0$ for all $j=1,\dots,M$. Then, with $d_{\mathrm{min}} := \min\{\mathrm{dist}(\bfz_j,\partial D_0)\, : \, j=1,\dots,M\}$ it holds that
\begin{align}\label{eq:ddest}
\left| \widehat{F}_\Gamma(\bfh) - \widehat{F}_\Gamma(0) - \widehat{F}_\Gamma'(0) \bfh  \right|  \,
 \leq \, C(\sigma,d_{\mathrm{min}}) e^{-d_{\mathrm{min}}\real s}   \frac{|s|^{(d+5)/2}}{(\real s)^3} \left\Vert \Lu^i \right\Vert_{H^{1}(D_1\setminus \overline{D})} \Vert \bfh \Vert_{C^1(\Gamma)}^2\, ,
\end{align} 
for some bounded domain $D_1$ with $D_0\subset \subset D_1$, where the constant $C(\sigma,\xi)$ depends on $\sigma^{-1}$ and on $d_{\mathrm{min}}^{-(d-1)/2}$ for the dimensions $d=2,3$.
\end{proposition}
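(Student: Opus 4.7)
The approach I would follow is that of \cite{Kirsch93} and \cite{Hett99}, rewritten entirely within the single-layer framework of Section~2 so as to keep every estimate explicit in $s$. The first step is to pull the perturbed problem on $\Omega_\bfh$ back to the reference exterior $\Omega$ via the diffeomorphism $\varphi = I + \bfh$ from \eqref{eq:phi-diff}. Setting $\widetilde u_\bfh := \widehat u_\bfh \circ \varphi$, a change of variables transforms \eqref{eq:HHeq} posed on $\Omega_\bfh$ into a second-order elliptic problem on the fixed domain $\Omega$ with coefficients of the form $I + O(\|\bfh\|_{C^1})$ and $1+O(\|\bfh\|_{C^1})$ and Dirichlet data $-\widehat u^i \circ \varphi$ on $\Gamma$. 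Taylor-expanding these coefficients to second order in $\bfh$ and applying the Helmholtz estimate \eqref{eq:wpin} to the resulting linear problems yields an $O(\|\bfh\|_{C^1}^2)$-control of the second-order remainder of $\widetilde u_\bfh$ in the norm $\|\cdot\|_{|s|,\Omega}$, with the $s$-factors inherited from \eqref{eq:wpin}.

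To identify the linear term, I would differentiate the boundary identity $\widehat u_\bfh = -\widehat u^i$ on $\Gamma_\bfh$ along the flow $\varphi$; the identity $(\widehat u + \widehat u^i)|_\Gamma = 0$ cancels the tangential contribution and leaves $\widehat u' = -(\bfh\cdot\bfnu)\partial_\bfnu(\widehat u + \widehat u^i)$ as the Dirichlet datum on $\Gamma$. Proposition~\ref{prop:H2-bound}, applied separately to $\widehat u$ on $\Omega$ and to $\widehat u^i$ on an auxiliary $C^2$ subdomain containing $\Gamma$, yields $\partial_\bfnu(\widehat u + \widehat u^i)\in H^{1/2}(\Gamma)$; multiplication by $\bfh\cdot\bfnu\in C^1$ preserves this space. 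Thus \eqref{eq:HHUp} falls into the framework of \eqref{eq:HHeq} and is uniquely solved in $H^1(\Omega)$ by the single-layer ansatz \eqref{eq:SV-1-th}, with $H^1$-bounds again explicit in $|s|$ and $\real s$.

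For the remainder estimate I would introduce the defect $\widehat r_\bfh := \widehat u_\bfh - \widehat u - \widehat u'$, which is well-defined and $H^1$-integrable on $\R^d\setminus\overline{D_0}$ for every admissible $\bfh$ and solves $s^2\widehat r_\bfh - \Delta\widehat r_\bfh = 0$ there. The representation formula \eqref{eq:representation} applied on $\partial D_0$ then gives
\begin{equation*}
\widehat r_\bfh(\bfz_j) = -(\Sop(s)\partial_\bfnu\widehat r_\bfh)(\bfz_j) + (\Dop(s)\gamma\widehat r_\bfh)(\bfz_j),\qquad j = 1,\dots,M,
\end{equation*}
and, since $\mathrm{dist}(\bfz_j,\partial D_0)\ge d_{\min}$, the pointwise layer-potential bounds \eqref{eq:pwbound} directly supply the factors $e^{-d_{\min}\real s}$ and $|s|^{(d-1)/2}$. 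The remaining traces on $\partial D_0$ are controlled by the $H^2$-norm of $\widehat r_\bfh$ on the annular region between $D\cup D_\bfh$ and $\partial D_0$, which is exactly where the second-order Taylor remainder from the first step lives; pulling $\widehat r_\bfh$ back to $\Omega$ through $\varphi$ and invoking Proposition~\ref{prop:H2-bound} on that annulus converts this into a bound of the form $\|\bfh\|_{C^1(\Gamma)}^2\,\|\widehat u^i\|_{H^1(D_1\setminus\overline D)}$.

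The principal difficulty is the $s$-explicit bookkeeping in this last step. Because the boundary data of the remainder problem are quadratic in $\bfh$ but involve second derivatives of $\widehat u$ and $\widehat u_\bfh$, the $H^2$-bound \eqref{eq:propRes} has to be invoked at its full $|s|^{5/2}(\real s)^{-1/2}$ rate, a further factor $|s|^2(\real s)^{-1}$ enters through \eqref{bound-V-m-1} in the single-layer representation of $\widehat u'$, and the pointwise bounds \eqref{eq:pwbound} contribute the final $|s|^{(d-1)/2}$; combining these with an $s$-explicit interior regularity step that converts $\|\widehat u^i\|_{H^1(D_1\setminus\overline D)}$ into $H^{3/2}(\Gamma)$-data is what produces the advertised rate $|s|^{(d+5)/2}/(\real s)^3$ in \eqref{eq:ddest}. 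Once this estimate is in hand, the Fr\'echet differentiability of $\widehat F_\Gamma$ at zero and the identification $\widehat F'_\Gamma(0)\bfh = (\widehat u'(\bfz_j))_{j=1,\dots,M}$ are immediate consequences.
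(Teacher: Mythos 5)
Your overall architecture --- pull back to the reference domain via $\varphi$, Taylor-expand the transformed coefficients to second order, identify the linear term, and transport the quadratic remainder to the observation points through the representation formula on $\partial D_0$ together with the pointwise layer-potential bounds \eqref{eq:pwbound} --- is the paper's. But the $s$-explicit bookkeeping in your final step, which is the whole point of this proposition beyond Kirsch's classical result, is not right. The paper never needs an $H^2$ bound on the remainder, and $\Vop^{-1}(s)$ does not enter the remainder estimate at all: the quadratic remainder $\Ltw_\bfh - \Lw - W$ is controlled purely variationally in the $\Vert\cdot\Vert_{|s|,\Omega}$-norm (estimate \eqref{eq:matder}, rate $|s|^3/(\real s)^3$), obtained by applying the inverse-operator bound $\Vert T_\bfh^{-1}\Vert\le C|s|/\real s$ three times in succession. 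Since the remainder solves the Helmholtz equation outside $D_0$, its Dirichlet and Neumann traces on $\partial D_0$ lie in $H^{1/2}$ and $H^{-1/2}$ and are controlled by this $H^1$-type norm, which is all \eqref{eq:pwbound} requires; the factor $|s|^{(d-1)/2}$ then gives exactly $(d-1)/2+3=(d+5)/2$. Your route --- invoking \eqref{eq:propRes} at rate $|s|^{5/2}(\real s)^{-1/2}$ on an annulus plus a factor $|s|^2(\real s)^{-1}$ from \eqref{bound-V-m-1} --- overshoots the powers of $|s|$ and is not available anyway: Proposition~\ref{prop:H2-bound} needs $H^{3/2}(\Gamma)$ Dirichlet data for the remainder, which the variational expansion does not supply, and the defect of the remainder only involves first derivatives of $\Ltw_\bfh$ and $\Lw$ paired against the Taylor remainders of $\det(J_\varphi)$ and $J_\varphi^{-1}(J_\varphi^{-1})^\top$, so no second derivatives of $\Lu$ or $\Lu_\bfh$ appear. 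Also, \eqref{eq:ddest} is stated in terms of $\Vert\Lu^i\Vert_{H^1(D_1\setminus\overline D)}$; no conversion to $H^{3/2}(\Gamma)$ data occurs.

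A second, smaller gap: identifying the linear term by ``differentiating the boundary identity along the flow'' is only the formal derivation. The rigorous step is to define the material derivative $W$ by the variational problem \eqref{eq:a2}, prove the quadratic estimate for $\Ltw_\bfh - \Lw - W$, and then establish the algebraic identity $W=\Lu'+\bfh\cdot\nabla\Lw$ via the divergence identity and Green's formula --- this is where the $H^2(\Omega)$ regularity of $\Lw$ from Proposition~\ref{prop:H2-bound} is genuinely used. Because $\bfh$ is supported near $\Gamma$ and $\varphi=I$ near the $\bfz_j$, the convection term $\bfh\cdot\nabla\Lw$ vanishes at the observation points and $\Ltw_\bfh(\bfz_j)=\Lu_\bfh(\bfz_j)$, which is what lets one pass from the pulled-back remainder to $\Lu_\bfh(\bfz_j)-\Lu(\bfz_j)-\Lu'(\bfz_j)$; your introduction of the defect $\widehat r_\bfh$ on $\R^d\setminus\overline{D_0}$ uses this tacitly but never establishes it.
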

\begin{rem}
    We note that formulas as \eqref{eq:HHUp} are well-known in the literature surrounding shape derivatives (see e.g. \cite[Sec.\@ 3.1]{SokZol92} for the shape derivative for the Poisson equation).
\end{rem}
\begin{proof}
Let $\chi  \in C^\infty(\R^d)$ be a cut-off function such that $\chi = 1 $ in $D_0$ and $\chi = 0$ in $\R^d \setminus \overline{D_1}$.
	The weak formulation of \eqref{eq:HHeq} can be formulated as the task to find $\Lw= \Lu+\Lu^i\chi \in H_0^1(\Omega)$ such that $a(\Lw,v) = f(v)$ for all $v \in H_0^1(\Omega)$ where
	\begin{align}\label{eq:andf}
	a(\Lw,v) \,:=\, \int_{\Omega} \nabla \Lw \cdot \overline{\nabla v} + s^2 \Lw \overline{v} \dx \, \quad \text{and} \quad  
	f(v) \, := \, \int_{ D_1\setminus \overline{D_0}} \nabla \chi \cdot ( \Lu^i\overline{\nabla v} - \overline{v} \nabla \Lu^i) \dx\, .
	\end{align}
    We start with a basic bound on $\Lw$. Testing $a(\Lw,v) = f(v)$ with $v=s\Lw$ and taking the real part on both sides yields
    \begin{align*}
        \real s \| \Lw \|_{|s|,\Omega}^2=\real s \, a(\Lw,\Lw) 
        =\real  f(s\Lw) \le C |s|\| \Lw \|_{|s|,\Omega}\left\Vert \Lu^i \right\Vert_{H^{1}(D_1\setminus \overline{D})} \,.
    \end{align*}
    Rearranging yields 
    \begin{align}\label{est:w-hat}
        \| \Lw \|_{|s|,\Omega} \le C\dfrac{|s|}{\real s}\left\Vert \Lu^i \right\Vert_{H^{1}(D_1\setminus \overline{D})} \, .
    \end{align}
	The weak formulation of \eqref{eq:HHeq} with $D$ and $\Lu$ replaced by $D_\bfh$ and $\Lu_\bfh$, respectively, can be formulated as the task to find $\Lw_\bfh = \Lu_\bfh + \Lu^i\chi$ such that $a_\bfh(\Ltw_\bfh,v) = f(v)$ for all $v \in H_0^1(\Omega)$ where $\Ltw_\bfh = \Lw_\bfh \circ \varphi$ and
	\begin{align}\label{eq:ah}
	a_\bfh(\Ltw_\bfh,v) \,:=\, \int_{\Omega} \left(\nabla \Ltw_\bfh \cdot J_\varphi^{-1}(J_\varphi^{-1})^\top\overline{\nabla v} + s^2 \Ltw_\bfh \overline{v}\right) \text{det}(J_\varphi) \dx \, .
	\end{align}
The term $J_\varphi$ denotes the Jacobian of $\varphi$. The Riesz representation theorem shows that there is a well-defined boundedly invertible linear operator 
	$T: H_0^1(\Omega) \to H_0^1(\Omega)$ such that $\langle T w, v \rangle_{|s|,\Omega} = a(w,v)$ for all $w,v \in H_0^1(\Omega)$ with the scalar product defined in \eqref{eq:skpS}.
In the same way, there is a bounded linear operator $T_\bfh: H_0^1(\Omega) \to H_0^1(\Omega)$ such that 
$\langle T_\bfh w, v \rangle_{|s|,\Omega} = a_\bfh(w,v)$ for all $w,v \in H_0^1(\Omega)$.
	
By \cite[Lem.\@ 2.2]{Hett99} (see also \cite[Lem.\@ 3.2]{Hag19}) it holds that
	\begin{subequations}\label{eq:esth}
	\begin{align}
	\left\Vert \text{det}(J_\varphi) -1 -\div(\bfh) \right\Vert_{L^\infty(\Omega)} \, &\leq \, C \left\Vert \bfh \right\Vert_{C^1(\Gamma)}^2 \, , \\
	\left\Vert J_\varphi^{-1}(J_\varphi^{-1})^\top \text{det}(J_\varphi) - I +J_\bfh +  J_\bfh^\top - \div(\bfh) \right\Vert_{L^\infty(\Omega)} \, &\leq \, C \left\Vert \bfh \right\Vert_{C^1(\Gamma)}^2 \, .
	\end{align}
	\end{subequations}	
	Moreover, since $(J_\varphi^{-1})^\top= I - J_\bfh^{\top} + \mathcal{O}(\Vert \bfh \Vert_{C^1(\Gamma)}^2)$, we find for $\Vert \bfh \Vert_{C^1(\Gamma)} < h_0$ with $h_0>0$ sufficiently small that
	\begin{align*}
	c_1 |\bfx| \, \leq \, |(J_\varphi^{-1})^\top \bfx|  \, \leq c_2 |\bfx| \quad \text{and} \quad  
	 c_3 \, \leq \, \text{det}(J_\varphi) \, \leq \, c_4\, \quad \text{with } c_j>0\; \text{for } 1\, \leq\,  j \leq 4\, , \; \bfx \in \R^d\, .
	\end{align*}
	Thus, for any $w \in H_0^1(\Omega)$,
	\begin{align*}
	\min\{c_1^2 c_3, c_3 \}\real s \Vert w \Vert_{|s|,\Omega}^2 
	\, &=  \, \min\{c_1^2 c_3, c_3 \} \real s \int_\Omega |\nabla w|^2 + |s|^2 |w|^2 \dx \\
	\, &\leq \, \real s \int_\Omega \left( |J_\varphi^{-\top}\nabla w|^2 + |s|^2 |w|^2 \right)\text{det}(J_\varphi) \dx
	 \, = \, \real \left( a_\bfh(w,sw) \right)  \\
	 \, &\leq \, | a_\bfh(w,sw) | 
	 \, = \, |s| |\langle T_\bfh w, w \rangle_{|s|,\Omega}| \, \leq \, |s| \Vert T_\bfh w \Vert_{|s|,\Omega} \Vert w \Vert_{|s|,\Omega} \, ,
	\end{align*}
	implying that $T_\bfh^{-1}$ exists for $\Vert \bfh \Vert_{C^1(\Gamma)}<h_0$ and 
	\begin{align}\label{boundTh}
	\Vert T_\bfh^{-1} \Vert_{H_0^1(\Omega) \leftarrow H_0^1(\Omega)} \leq C \frac{|s|}{\real s}
	\end{align}
	 for a constant $C>0$ that does not depend on $\Vert \bfh \Vert_{C^1(\Gamma)}$ and $s$.
	Moreover,
	\begin{multline*}
	\Vert (T_\bfh - T) w \Vert_{|s|,\Omega}^2 \, = \, \left| a_\bfh(w, (T_\bfh - T) w) - a(w,(T_\bfh - T) w) \right| \\ 
	\, = \, 
	\left|\int_\Omega \nabla w \cdot \left( J_\varphi^{-1}(J_\varphi^{-1})^\top \text{det}(J_\varphi) - I \right) \nabla\big( \overline{\left( T_\bfh - T \right) w}\big) + s^2 w \left( \text{det}(J_\varphi) -1 \right) \overline{\left( T_\bfh - T \right) w} \dx \right| \, 
	\end{multline*}
	and by \eqref{eq:esth}, combined with the Cauchy--Schwarz inequality, this yields
	\begin{equation}\label{boundThmT}
	\Vert (T_\bfh - T) w \Vert_{|s|,\Omega} \, \leq \, C \Vert w \Vert_{|s|,\Omega}\left\Vert \bfh \right\Vert_{C^1(\Gamma)}\, .
	\end{equation}
	By the Riesz representation theorem, the weak formulations to find $\Lw,\widetilde{w}_\bfh$ such that $a(\Lw,v) = f(v)$ and $a_\bfh(\widetilde{w}_\bfh,v) = f(v)$ for all $v \in H_0^1(\Omega)$ are equivalent to determine $\Lw,\widetilde{w}_\bfh$ such that $T \Lw = F$ and $T_\bfh \widetilde{w}_\bfh = F$ for a $F \in H_0^1(\Omega)$.
	Thus,
	$T_\bfh (\widetilde{w}_\bfh-\Lw) \, = \, (T-T_\bfh) \Lw$ and using \eqref{boundTh}, \eqref{boundThmT} and \eqref{est:w-hat} implies that
	\begin{align}\label{eq:contih}
	\Vert \Ltw_\bfh - \Lw \Vert_{|s|,\Omega} \, \leq \, C \frac{|s|^{2}}{(\real s)^2} \left\Vert \Lu^i \right\Vert_{H^{1}(D_1\setminus \overline{D})} \Vert \bfh \Vert_{C^1(\Gamma)} \, .
	\end{align}
	Next, we show that there is a function $W \in H_0^1(\Omega)$ such that
	\begin{align}\label{eq:matder}
	\frac{1}{\Vert \bfh \Vert_{C^1(\Gamma)}} \left\Vert \Ltw_\bfh - \Lw - W \right\Vert_{|s|,\Omega} \, \leq \, C \frac{|s|^{3}}{(\real s)^{3}} \left\Vert \Lu^i \right\Vert_{H^{1}(D_1\setminus \overline{D} )}\Vert \bfh \Vert_{C^1(\Gamma)} \, .
	\end{align}
	Using $a(\Lw,v) = f(v)$ for all $v \in H_0^1(\Omega)$ with $a$ and $f$ from \eqref{eq:andf} together with \eqref{eq:ah} yields
	\begin{align}\label{eq:a1}
	a(\Ltw_\bfh - \Lw, v) \, = \, - \bigg( \int_\Omega \nabla \Ltw_\bfh \cdot \left( J_\varphi^{-1}(J_\varphi^{-1})^\top \text{det}(J_\varphi) - I \right) \overline{\nabla v} + s^2 \Ltw_\bfh ( \text{det}(J_\varphi) -1 ) \overline{v} \dx \bigg)
	\end{align}
	for all $v \in H_0^1(\Omega)$. We define $W \in H_0^1(\Omega)$ to be the unique solution of 
	\begin{align}\label{eq:a2}
	a(W,v) \, = \, \int_\Omega \nabla \Lw \cdot \left( J_\bfh + J_\bfh^\top - \div(\bfh)I \right) \overline{\nabla v} - s^2 \Lw \div(\bfh) \overline{v} \dx \, \quad \text{for all } v \in H_0^1(\Omega) \, .
	\end{align}
Then, by using \eqref{eq:a1}, \eqref{eq:a2}, \eqref{eq:esth}, the Cauchy--Schwarz inequality, \eqref{eq:contih} and \eqref{est:w-hat}, we find that 
\begin{align*}
&\frac{1}{\Vert \bfh \Vert_{C^1(\Gamma)}} \left| a\left( \Ltw_\bfh - \Lw - W, v \right)\right| \\ 
&\, \leq \, 
\frac{1}{\Vert \bfh \Vert_{C^1(\Gamma)}} \bigg| \int_\Omega \nabla (\Ltw_\bfh-\Lw) \cdot \left( J_\varphi^{-1}(J_\varphi^{-1})^\top \text{det}(J_\varphi) - I \right) \overline{\nabla v} + s^2 (\Ltw_\bfh-\Lw) ( \text{det}(J_\varphi) -1 ) \overline{v} \dx \\
&\phantom{\,\leq \, \frac{1}{\Vert \bfh \Vert_{C^1(\Gamma)}} \bigg|} + 
\int_\Omega \bigg( \nabla \Lw \cdot \left(J_\varphi^{-1}(J_\varphi^{-1})^\top \text{det}(J_\varphi) - I  +  J_\bfh + J_\bfh^\top - \div(\bfh)I \right) \overline{\nabla v} \\
&\phantom{\,\leq \,\frac{1}{\Vert \bfh \Vert_{C^1(\Gamma)}} \bigg|+ 
\int_\Omega }+ s^2 \Lw \left( \text{det}(J_\varphi) -1 - \div(\bfh) \right) \overline{v} \bigg) \dx \bigg| \\
&\, \leq \, C\left(\left\Vert  \Ltw_\bfh - \Lw \right\Vert_{|s|, \Omega} \Vert v \Vert_{|s|,\Omega} +  
\left\Vert  \Lw \right\Vert_{|s|, \Omega} \Vert v \Vert_{|s|,\Omega} \Vert \bfh \Vert_{C^1(\Gamma)}\right) \\
& \, \leq \, C\frac{|s|^{2}}{(\real s)^2} \left\Vert \Lu^i \right\Vert_{H^{1}(D_1\setminus \overline{D} )} \Vert v \Vert_{|s|,\Omega}\Vert \bfh \Vert_{C^1(\Gamma)}\, .
\end{align*}
Using $v = \Ltw_\bfh - \Lw - W$ and $\real s \Vert v \Vert_{|s|,\Omega} \leq |s| |a(v,v)|$ (see also \cite[Proof of Lem.\@ 4.9]{BanSay22}) shows \eqref{eq:matder}.

It is left to show that the function $W$ defined by \eqref{eq:a2} has the representation $W = \Lu' + \bfh \cdot \nabla \Lw$, where $\Lw = \Lu + \Lu^i\chi  \in H_0^1(\Omega)$ with $\Lu$ as the unique solution to \eqref{eq:HHeq} and $\Lu' \in H^1(\Omega)$ is the domain derivative defined by \eqref{eq:HHUp}.
Due to our assumption that $D$ is a $C^2$-domain, we have that $\Lw \in H^2(\Omega)$ (see Proposition~\ref{prop:H2-bound}).
For all $v \in H^2(\Omega) \cap H_0^1(\Omega)$ it holds that (see also \cite[p.\@ 87]{Kirsch93})
\begin{multline*}
\nabla \Lw \cdot \left( J_\bfh + J_\bfh^\top - \text{div}(\bfh)I \right) \nabla v  \\
\, = \, \text{div}\left( (\bfh \cdot \nabla v) \nabla \Lw + (\bfh \cdot \nabla \Lw) \nabla v - ( \nabla v \cdot \nabla \Lw)\bfh \right) - (\bfh \cdot \nabla \Lw ) \Delta v - (\bfh \cdot \nabla v ) \Delta \Lw
\end{multline*}
and the use of Green's formula gives that
\begin{multline}\label{eq:aWv}
a(W,v) \, = \, - \left(\int_\Omega  ( \bfh \cdot \nabla \Lw) \overline{\Delta v} + (\bfh \cdot \overline{\nabla v}) \Delta \Lw + s^2 \div(\bfh) \Lw \overline{v}  \dx\right) \\
+ \left(\int_{\Gamma} (\bfh\cdot \overline{\nabla v}) (\bfnu \cdot \nabla \Lw) + (\bfh\cdot \nabla \Lw) (\bfnu \cdot \overline{\nabla v}) - (\overline{\nabla v} \cdot \nabla \Lw) (\bfnu \cdot \bfh)  \ds(\bfx)\right)
\end{multline}
for all $v \in H^2(\Omega) \cap H_0^1(\Omega)$. For any $v \in H^2(\Omega)\cap H_0^1(\Omega)$ it holds that $\gamma(\nabla v) = (\bfnu \cdot \gamma(\nabla v))\bfnu$ and thus, using Green's formula in \eqref{eq:aWv} together with the fact that $( \bfh \cdot \overline{\nabla v})\Delta \Lw = s^2 ( \bfh \cdot \overline{\nabla v})\Lw$ thus give 
\begin{align*}
a(W,v) \, &= \, \int_\Omega \overline{\nabla v} \cdot \nabla( \bfh \cdot \nabla \Lw) - ( \bfh \cdot \overline{\nabla v}) \Delta \Lw - s^2 \Lw \overline{v} \div(\bfh) \dx \\
&= \, \int_\Omega \overline{\nabla v} \cdot \nabla( \bfh \cdot \nabla \Lw) - s^2\left( \div(\Lw \overline{v} \bfh) - (\bfh \cdot \nabla \Lw) \overline{v} \right) \dx \\
&= \, \int_\Omega \overline{\nabla v} \cdot \nabla( \bfh \cdot \nabla \Lw) + s^2(\bfh \cdot \nabla \Lw) \overline{v} \dx  \, = \, a( \bfh \cdot \nabla \Lw, v) 
\end{align*}
and thus, $a(W-\bfh \cdot \nabla \Lw ,v) = 0$ for all $v \in H^2(\Omega) \cap H_0^1(\Omega)$.
Since the domain derivative $\Lu'\in H^1(\Omega)$ defined in \eqref{eq:HHUp} also satisfies $a(\Lu' ,v) = 0$ for all $v \in H^2(\Omega) \cap H_0^1(\Omega)$ and additionally, $\gamma\left(W-\bfh \cdot \nabla \Lw\right) = -(\bfh \cdot \bfnu)\partial_\bfnu \Lw = \gamma \Lu'$, we conclude that $\Lu' = W - \bfh\cdot \nabla \Lw$.

Finally, let $\bfz \in \Omega$ and let $h_0>0$ be such that $D_\bfh \subset \subset D_0$ for all $\Vert \bfh \Vert_{C^1(\Gamma)} \leq h_0$ and $\bfz \notin D_0$. Then, using that $\Lu_\bfh(\bfz) - \Lu(\bfz) = \widetilde{w}_\bfh(\bfz) - \Lw(\bfz)$, the equality $\Lu'(\bfz) = W(\bfz)$, the representation formula in \eqref{eq:representation} with both integral operators $S(s)$ and $D(s)$ integrating over $\partial D_0$, the pointwise estimates in \eqref{eq:pwbound}, \eqref{eq:equiv} and the bound in \eqref{eq:matder}, we find that
\begin{align*}
&|\Lu_\bfh(\bfz) - \Lu(\bfz) - \Lu'(\bfz)|  \, = \, \left|-(S(s) \left( \partial_\bfnu \left( \widetilde{w}_\bfh - \Lw - W \right) \right))(\bfz) + (D(s) \left(\gamma\left(  \widetilde{w}_\bfh - \Lw - W \right)\right))(\bfz)  \right| \\
\, &\leq \, C(\sigma,\text{dist}(\bfz,\partial D_0)) |s|^{(d-1)/2} e^{-\mathrm{dist}(\bfz,\partial D_0)\real s} \left\Vert \widetilde{w}_\bfh - \Lw - W\right\Vert_{H^1(\R^d \setminus \overline{D_0})} \\
\, &\leq \, C(\sigma,\text{dist}(\bfz,\partial D_0)) |s|^{(d-1)/2}\max\{1,|s|^{-1} \}e^{-\mathrm{dist}(\bfz,\partial D_0)\real s} \left\Vert \widetilde{w}_\bfh - \Lw - W\right\Vert_{|s|,\Omega} \\
\, & \leq \, C(\sigma,\text{dist}(\bfz,\partial D_0)) e^{-\mathrm{dist}(\bfz,\partial D_0)\real s}  \frac{|s|^{(d+5)/2}}{(\real s)^3} \left\Vert \Lu^i \right\Vert_{H^{1}(D_1\setminus \overline{D} )} \Vert \bfh \Vert_{C^1(\Gamma)}^2 \, .
\end{align*}
This ends the proof.
\end{proof}
\begin{rem}
The regularity assumption on $D$ to be a $C^2$ domain together with $\bfh \in C^1(\Gamma,\R^d)$ are sufficient to guarantee the existence of the domain derivative in the Laplace domain, even though $D_\bfh = \varphi(D)$ with $\varphi$ from \eqref{eq:phi-diff} would not be $C^2$ anymore. 
In applications such as, for example, an iterative shape optimization, higher regularities, e.g. $\bfh \in C^2(\Gamma,\R^d)$, are required for the next iterate to be a $C^2$ domain and consequently for the next domain derivative to exist.
\end{rem}

\subsection{Frequency-explicit bounds for the domain derivative}
 
 Applying the bounds for the potential and boundary operators yields the following results.
 
 \begin{proposition}\label{prop:th-domain-derivative}
	Let $\real s \geq \sigma >0$, the boundary $\Gamma=\partial D$ at least $C^2$ and $\bfh\in C^1(\Gamma,\mathbb R^d)$. 
	Let further $\Lu'\in H^1(\Omega)$ denote the domain derivative from  \eqref{eq:HHUp}. 
	Then, the following bound holds in the natural $H^1$-norm
	\begin{align}\label{eq:esth1}
		\left\Vert \Lu' \right\Vert_{H^1(\Omega)} \, \le \,  C_\sigma \frac{|s|^4}{(\real s)^{3/2}} \left\Vert \gamma \Lu^i \right\Vert_{H^{3/2}(\Gamma)} \, .
	\end{align}
 Moreover, we have the following bound with respect to the $L^2$-norm 
  \begin{align}\label{eq:estL2}
		\left\Vert \Lu' \right\Vert_{L^2(\Omega)} \, \le \, C_\sigma \frac{|s|^3}{(\real s)^{3/2}} \left\Vert  \gamma\Lu^i \right\Vert_{H^{3/2}(\Gamma)} \, .
	\end{align}
	The constant $C_\sigma$ in those estimates depends only on the boundary $\Gamma$ and polynomially on $\sigma^{-1}$.
Finally, for any point $\bfz \in \Omega$ away from the boundary, we have the pointwise estimate
 	\begin{align}\label{eq:estpw}
 	\left| \Lu' (\bfz) \right|
	\, \le \,  C(\sigma,\mathrm{dist}(\bfz,\Gamma)) |s|^{(d+8)/2}	e^{-\mathrm{dist}(\bfz,\Gamma )\real s} \left\Vert \gamma\Lu^i \right\Vert_{H^{{3}/2}(\Gamma)} \, . 
\end{align}
The constant $C(\sigma,\mathrm{dist}(\bfz,\Gamma))$ depends polynomially on $\sigma^{-1}$ and on $\mathrm{dist}(\bfz,\Gamma)^{-(d-1)/2}$ for the dimensions $d=2,3$.
\end{proposition}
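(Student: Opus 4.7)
The plan is to reduce all three estimates to bounds on the Dirichlet data $\widehat g := \gamma \Lu' = -(\bfh\cdot\bfnu)\,\partial_\bfnu(\Lu+\Lu^i) \in H^{1/2}(\Gamma)$ and then combine them with three already established tools: the well-posedness estimate \eqref{eq:wpin}, the representation \eqref{eq:SV-1-th} paired with the pointwise layer potential bound \eqref{eq:pwbound1} and the inverse single-layer bound \eqref{bound-V-m-1}.

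The first step, which is the main technical step, is a frequency-explicit $H^{1/2}(\Gamma)$-bound on $\widehat g$. Since $\bfh\cdot\bfnu \in C^1(\Gamma)$ acts as a bounded multiplier on $H^{1/2}(\Gamma)$ with constant independent of $s$, it suffices to bound $\partial_\bfnu(\Lu+\Lu^i)$ in $H^{1/2}(\Gamma)$. I would apply Proposition~\ref{prop:H2-bound} twice: once with $\mathcal{O}=\Omega$, $\Lg = -\gamma \Lu^i$, giving $\|\Lu\|_{H^2(\Omega)}$, and once with $\mathcal{O}=D$ and $\Lg = \gamma \Lu^i$ (using that $\Lu^i$ solves the Helmholtz equation globally, hence in particular on $D$), giving $\|\Lu^i\|_{H^2(D)}$. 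The $H^2$ trace theorem then produces
\begin{align*}
\bigl\|\partial_\bfnu(\Lu+\Lu^i)\bigr\|_{H^{1/2}(\Gamma)} \,\le\, C_\sigma \frac{|s|^{5/2}}{(\real s)^{1/2}}\, \|\gamma \Lu^i\|_{H^{3/2}(\Gamma)},
\end{align*}
and consequently the same bound (up to a $\|\bfh\|_{C^1(\Gamma)}$ factor absorbed in $C_\sigma$) for $\widehat g$.

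The $H^1$ and $L^2$ estimates then follow from \eqref{eq:wpin} applied to the scattering problem \eqref{eq:HHUp}, which by linearity is the same as \eqref{eq:HHeq} with $-\gamma \Lu^i$ replaced by $\widehat g$:
\begin{align*}
\|\Lu'\|_{|s|,\Omega} \,\le\, \frac{C_\sigma}{\real s}\,|s|^{3/2}\,\|\widehat g\|_{H^{1/2}(\Gamma)} \,\le\, C_\sigma \frac{|s|^4}{(\real s)^{3/2}}\,\|\gamma \Lu^i\|_{H^{3/2}(\Gamma)}.
\end{align*}
Since $\real s \ge \sigma>0$ implies $\min\{1,|s|\}^{-1}\le \max\{1,\sigma^{-1}\}$, \eqref{eq:equiv} immediately gives \eqref{eq:esth1}; and the definition \eqref{eq:wpin} of the weighted norm yields $|s|\,\|\Lu'\|_{L^2(\Omega)} \le \|\Lu'\|_{|s|,\Omega}$, producing \eqref{eq:estL2}.

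For the pointwise bound \eqref{eq:estpw}, I would use the indirect single-layer representation from \eqref{eq:SV-1-th} (applied with $\Lu^i$ replaced by $-\widehat g$), namely $\Lu' = \Sop(s)\Vop^{-1}(s)\widehat g$. Chaining the pointwise estimate \eqref{eq:pwbound1} with the inverse single-layer bound \eqref{bound-V-m-1} gives
\begin{align*}
|\Lu'(\bfz)| \,\le\, C(\sigma,\mathrm{dist}(\bfz,\Gamma))\,|s|^{(d-1)/2} e^{-\mathrm{dist}(\bfz,\Gamma)\real s}\,\frac{|s|^2}{\real s}\,\|\widehat g\|_{H^{1/2}(\Gamma)},
\end{align*}
and substituting the bound on $\|\widehat g\|_{H^{1/2}(\Gamma)}$ collapses the powers of $|s|$ to $|s|^{(d-1)/2+2+5/2} = |s|^{(d+8)/2}$, while all negative powers of $\real s$ are absorbed into the polynomial dependence of $C(\sigma,\mathrm{dist}(\bfz,\Gamma))$ on $\sigma^{-1}$. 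The main obstacle is ensuring that every constant hidden in the trace theorem and in the multiplier estimate for $\bfh\cdot\bfnu$ is genuinely $s$-independent, so that the $|s|$ powers in Proposition~\ref{prop:H2-bound}, in \eqref{eq:wpin}, and in \eqref{bound-V-m-1} are the only sources of frequency growth; once that bookkeeping is done, the three bounds follow essentially by composition.
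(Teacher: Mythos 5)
Your argument is correct and reproduces exactly the right powers of $|s|$ and $\real s$, but it is organized differently from the paper's proof. The paper establishes \eqref{eq:esth1} by an energy argument: Green's first identity tested with $s\Lu'$, combined with the Neumann-trace bound $\|\partial_\bfnu\Lu'\|_{H^{-1/2}(\Gamma)}\le C_\sigma |s|^2(\real s)^{-1}\|\gamma\Lu'\|_{H^{1/2}(\Gamma)}$ from \cite[Thm.\@ 4.16]{BanSay22} and with Proposition~\ref{prop:H2-bound} for the boundary data. You instead bound the Dirichlet datum $\widehat g=-(\bfh\cdot\bfnu)\partial_\bfnu(\Lu+\Lu^i)$ once and for all in $H^{1/2}(\Gamma)$ (Proposition~\ref{prop:H2-bound} on both $\Omega$ and $D$, the $H^2$ trace theorem, and the $C^1$ multiplier property) and then feed it into the generic well-posedness estimate \eqref{eq:wpin}; this is legitimate, since the paper itself invokes \eqref{eq:wpin} with general Dirichlet data inside the proof of Proposition~\ref{prop:H2-bound}, and it gives the same $|s|^{3/2+5/2}(\real s)^{-3/2}=|s|^4(\real s)^{-3/2}$. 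For the pointwise bound the paper uses the direct Kirchhoff representation \eqref{eq:representation}, which requires separate estimates for $\partial_\bfnu\Lu'$ (via \cite[Lem.\@ 4.5]{BanSay22}) and for $\gamma\Lu'$; you use the indirect single-layer representation $\Lu'=\Sop(s)\Vop^{-1}(s)\widehat g$ and chain \eqref{eq:pwbound1} with \eqref{bound-V-m-1}, which avoids the Neumann-trace lemma entirely and again lands on $|s|^{(d-1)/2+2+5/2}=|s|^{(d+8)/2}$ after absorbing the negative powers of $\real s$ into the $\sigma$-dependent constant. Your route is somewhat more economical — a single nontrivial input (the $H^{1/2}$ bound on $\widehat g$) drives all three estimates — whereas the paper's energy argument re-derives the stability of the exterior Dirichlet problem for the specific datum at hand; both are sound and yield identical final bounds.
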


 \begin{proof}
We start by applying the first Green identity to $\Lu'\in H_\Delta^1(\Omega)$, which implies that for any $ v \in H^1(\Omega)$ the representation
\begin{align}\label{eq:G1}
	\int_{\Omega} \nabla \Lu' \cdot \overline{\nabla v} +  \Delta \Lu'  \overline{v}  \, \dx
	\,=\, -\int_\Gamma (\partial_\bfnu\Lu')( \overline{\gamma v})  \ds(\bfx)\,
\end{align}  
holds true.
In \eqref{eq:G1} we set $ v = s\Lu'$, insert the Helmholtz equation for $\Lu'$ as well as the boundary condition from \eqref{eq:HHUp} and find that
\begin{align*}
	\int_{\Omega} \overline{s} \left| \nabla \Lu'\right|^2 +  s  \left|s \Lu'\right|^2    \dx
	\,=\, 
	-\overline{s}\int_\Gamma (\partial_{\bfnu}\Lu')  (\overline{\gamma \Lu'})  \ds(\bfx)
	 		\,=\,
	\overline{s}\int_\Gamma (\partial_{\bfnu}\Lu')  \left(\bfh \cdot \bfnu\right)\overline{\partial_{\bfnu} (\Lu + \Lu^i)}  \ds(\bfx) \,.
\end{align*}  	
We continue by estimating the real part by the modulus from above to obtain
 	\begin{align}\label{prop:intermediate}
 		\real s	\int_{\Omega}  \left| \nabla \Lu'\right|^2 +    \left|s \Lu'\right|^2   \, \mathrm d \boldsymbol x
 		\,\le\, \left|s\right| \left\Vert \partial_{\bfnu}\Lu' \right\Vert_{H^{-1/2}(\Gamma)} \left\Vert \partial_{\bfnu} (\Lu+\Lu^i) \right\Vert_{H^{1/2}(\Gamma)}
 		\left\Vert \bfh \cdot \bfnu \right\Vert_{C^{1}(\Gamma)} \,.
 	\end{align}  
 	The factor $\left\Vert \bfh \cdot \bfnu \right\Vert_{C^{1}(\Gamma)}$ stays bounded for $\Gamma \in C^2$ and $\bfh\in C^1(\Gamma,\mathbb R^d)$. By \cite[Thm.\@ 4.16]{BanSay22} there is a positive constant $C_\sigma$, such that
 	\begin{align*}
 		\left\Vert  \partial_{\bfnu}\Lu' \right\Vert_{H^{-1/2}(\Gamma)}
 		&\, \le \, 
 		C_\sigma \frac{\left|s\right|^2}{\real s} \left\Vert  \gamma\Lu' \right\Vert_{H^{1/2}(\Gamma)}\, .
 	\end{align*}
 	Applying this estimate to the first factor of \eqref{prop:intermediate} and Proposition~\ref{prop:H2-bound} to the second one yields
 	\begin{align*}
 		\real s	\int_{\Omega}  \left| \nabla \Lu'\right|^2 +    \left|s \Lu'\right|^2  \dx	\le C_\sigma
 	\frac{\left|s\right|^{11/2}}{(\real s)^{3/2}} \left\Vert  \gamma\Lu' \right\Vert_{H^{1/2}(\Gamma)} \left\Vert \gamma \Lu^i \right\Vert_{H^{3/2}(\Gamma)}
 	\le C_\sigma
 	\frac{\left|s\right|^{8}}{{(\real s)^2}} \left\Vert  \gamma\Lu^i \right\Vert^2_{H^{3/2}(\Gamma)}\,,
 	\end{align*}
 	where we used Proposition~\ref{prop:H2-bound} again in the final inequality. 
 	Dividing by $\real s$ and taking the square root on both sides gives
 	\begin{align}\label{eq:esth1s}
 	\Vert \Lu' \Vert_{|s|,\Omega} \, \leq \, 
 	C_\sigma
 	\frac{\left|s\right|^{4}}{{(\real s)^{3/2}}} \left\Vert  \gamma\Lu^i \right\Vert_{H^{3/2}(\Gamma)}\, .
 	\end{align}
 	Using \eqref{eq:equiv} now shows \eqref{eq:esth1}. The estimate in terms of the $L^2$-norm in \eqref{eq:estL2} is obtained by omitting the first summand on the left-hand side and dividing through $|s|$ on both sides.
 	
 	We turn towards the stated estimate for point evaluations. Combining \cite[Lem.\@ 4.5]{BanSay22} with the estimate \eqref{eq:esth1s} gives
 	\begin{align*}
 	\left\Vert \partial_{\bfnu} \widehat u'\right\Vert^2_{H^{-1/2}(\Gamma)} 
 	&\,\le\,
 	|s|\int_{\Omega}  \left| \nabla \Lu'\right|^2 +    \left|s \Lu'\right|^2   \, \mathrm d \boldsymbol x 
 	\,\le \,
 	C_\sigma
 	{\left|s\right|^{9}} \left\Vert  \gamma\Lu^i \right\Vert^2_{H^{3/2}(\Gamma)} \,.
 	\end{align*}
 	By the trace theorem and \eqref{eq:esth1} we moreover obtain
 	\begin{align*}
 	 	\left\Vert  \gamma\Lu' \right\Vert_{H^{1/2}(\Gamma)}	
 	 	\,\le\,
 	 C_\Gamma	\left\Vert  \Lu' \right\Vert_{H^{1}(\Omega)}
 	 \,\le \,C_\sigma |s|^4 \left\Vert \gamma \Lu^i\right\Vert_{H^{3/2}(\Gamma)}\,.
 	\end{align*}
   Now we apply the representation formula in \eqref{eq:representation} and use the pointwise estimates in \eqref{eq:pwbound} to find that
   \begin{align*}
   \left| \Lu' (\bfz)\right|\,&=\, 	\left|- (\Sop(s) (\partial_{\bfnu} \Lu')) (\bfz) + (\Dop(s)(\gamma \Lu' ))(\bfz) \right| \\
   	\,&\le\, C(\sigma,\mathrm{dist}(\bfz,\Gamma)) e^{-\mathrm{dist}(\bfz,\Gamma )\real s}|s|^{(d-1)/2} \left(\left\Vert\partial_{\bfnu} \Lu' \right\Vert_{H^{-1/2}(\Gamma)} + \Vert \gamma \Lu' \Vert_{H^{1/2}(\Gamma)}\right)\,.
   \end{align*}
 The statement now follows by inserting the estimates for the traces of the scattered wave as before.
 This ends the proof.	
 \end{proof}

\subsection{Boundary integral equations for the domain derivative}
We can now find an integral formulation for the domain derivative in the Laplace domain. 
By \eqref{eq:SV-1-th} the solution to \eqref{eq:HHUp} is given by 
\begin{align*}
\Lu' \, = \,  -\Sop(s)\Vop^{-1}(s)\left((\bfh\cdot \bfnu)\partial_\bfnu(\Lu + \Lu^i)\right)\, .
\end{align*}
The aim is to replace $(\bfh\cdot \bfnu)\partial_\bfnu(\Lu + \Lu^i)$ by an operator taking a boundary density as an input. For this purpose, we define the linear and bounded operator $L(s) : H^{3/2}(\Gamma) \to H^{1/2}(\Gamma)$ by
	\begin{align}\label{eq:defLs}
	L(s)g \, := \,
	P_{\bfh\cdot  \bfnu}\left(\partial_{\bfnu,D}\Lambda_D - \partial_{\bfnu,\Omega}\Lambda_\Omega\right)g\,. 
	\end{align}
	In this definition, $P_{\bfh\cdot  \bfnu} : H^{1/2}(\Gamma)\to H^{1/2}(\Gamma)$ is given by 
		$P_{\bfh\cdot  \bfnu} \varphi =  -\left(\bfh \cdot \bfnu\right) \varphi$.	
Moreover, the operator $\partial_{\bfnu,\mathcal{O}}$ for $\mathcal{O} \in \{D,\Omega\}$ denotes the normal trace in $H^2(\mathcal{O})$ where $\bfnu$ is always directed into the domain $\Omega$. Furthermore, $\Lambda_{\mathcal{O}}:H^{3/2}(\Gamma) \to H^2(\mathcal{O})$ is defined by $\Lambda_{\mathcal{O}}\Lg = \Lv$, where $\Lv \in H^2(\mathcal{O})$ is the unique solution to \eqref{eq:HHeqg}.
The terms $\partial_{\bfnu,D}\Lambda_D$ and $\partial_{\bfnu,\Omega}\Lambda_\Omega$ are the interior and exterior Dirichlet-to-Neumann operators (see also \cite[Sec.\@ 4.8]{BanSay22}).
We define the linear and bounded operator $\mathcal{F}_\bfh:H^{3/2}(\Gamma) \to H^1(\Omega)$ by
\begin{align}\label{eq:F_hGen}
	\mathcal{F}_{\bfh}(s) \, :=\Sop(s)\Vop^{-1}(s)L(s) \, .
	\end{align}
The operator family $\mathcal{F}_\bfh(s)$ maps the incoming wave to the domain derivative in the Laplace domain, i.e., the Fr\'echet derivative of the map $\widehat{F}_{\Gamma}$ from \eqref{eq:domainder} at $0$ applied to $\bfh \in C^1(\Gamma,\R^d)$
may be written as
	\begin{align}\label{eq:F_h-ui}
	\widehat{F}_{\Gamma}'(0)\bfh \,=\,
		(\mathcal{F}_{\bfh}(s)\gamma	\Lu^i)(\bfz_j))_{j=1,\dots,M} \,.
	\end{align}
By Proposition~\ref{prop:th-domain-derivative} we obtain bounds for $\mathcal{F}_\bfh(s)\Lg$ with $\mathcal{F}_\bfh(s)$ from \eqref{eq:F_hGen} in different norms. Moreover, if $\Lg$ is the Dirichlet trace of an interior
Helmholtz solution, then the definition of $L(s)$ from \eqref{eq:defLs} admits a simpler, more useful form.
This is collected in the next corollary.
\begin{cor}
The operator family $\mathcal{F}_\bfh(s)$ from \eqref{eq:F_hGen} is bounded by
\begin{align*}
\Vert \mathcal{F}_\bfh(s) \Vert_{H^{1}(\Omega) \leftarrow H^{3/2}(\Gamma)} \, &\leq \, C_\sigma\frac{|s|^4}{(\real s)^{3/2}} \, ,\\
\Vert \mathcal{F}_\bfh(s) \Vert_{L^2(\Omega) \leftarrow H^{3/2}(\Gamma)} \, &\leq \, C_\sigma\frac{|s|^3}{(\real s)^{3/2}} \, , \\
\left| \mathcal{F}_\bfh(s)\cdot (\bfz) \right|_{\C \leftarrow H^{3/2}(\Gamma)} \, &\leq \, C(\sigma,\mathrm{dist}(\bfz,\Gamma)) |s|^{(d+8)/2}	e^{-\mathrm{dist}(\bfz,\Gamma )\real s} \, , \quad \bfz \in \Omega \, .
\end{align*}
The constants have the same properties as the respective constants in Proposition \ref{prop:th-domain-derivative}. 
Moreover, if $\Lg = \gamma \Lu^i|_D \in H^{3/2}(\Gamma)$ for $\Lu^i|_D \in H^2(D)$ satisfying 
$s^2\Lu^i - \Delta \Lu^i = 0$ in $D$, then 
\begin{align}\label{eq:FhHE}
	\mathcal{F}_{\bfh}(s)\gamma\Lu^i \, = \, \Sop(s)\Vop^{-1}(s)P_{\bfh\cdot  \bfnu}\Vop^{-1}(s)\gamma u^i \, .
	\end{align}
\end{cor}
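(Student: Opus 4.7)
The plan is to interpret $\widehat u := \mathcal{F}_\bfh(s)\Lg = \Sop(s)\Vop^{-1}(s)L(s)\Lg$ as the unique $H^1(\Omega)$ solution of the exterior Helmholtz equation with Dirichlet trace $\gamma \widehat u = \Vop(s)\Vop^{-1}(s)L(s)\Lg = L(s)\Lg$ (which is just the single-layer ansatz already used in \eqref{eq:SV-1-th}), and then reduce every stated estimate to bounds already derived for scattered fields. The key intermediate estimate is control of $L(s)\Lg$ in $H^{1/2}(\Gamma)$: since $P_{\bfh\cdot \bfnu}$ is a bounded multiplication operator on $H^{1/2}(\Gamma)$ (using $\Gamma\in C^2$ and $\bfh\in C^1(\Gamma,\R^d)$) and the normal trace is continuous from $H^2(\mathcal O)$ into $H^{1/2}(\Gamma)$, Proposition~\ref{prop:H2-bound} applied separately to $\Lambda_D\Lg$ and $\Lambda_\Omega\Lg$ gives
\begin{align*}
\left\Vert L(s)\Lg\right\Vert_{H^{1/2}(\Gamma)} \,\le\, C_\sigma \frac{|s|^{5/2}}{(\real s)^{1/2}}\left\Vert\Lg\right\Vert_{H^{3/2}(\Gamma)}.
\end{align*}

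Feeding this into \eqref{eq:wpin} (with $\gamma \Lu^i$ replaced by $-L(s)\Lg$) yields $\Vert\widehat u\Vert_{|s|,\Omega} \le C_\sigma |s|^4/(\real s)^{3/2}\Vert\Lg\Vert_{H^{3/2}(\Gamma)}$. The $H^1$-bound then follows from the norm equivalence \eqref{eq:equiv}, absorbing the factor $\max\{1,|s|^{-1}\}\le \max\{1,\sigma^{-1}\}$ into $C_\sigma$; the $L^2$-bound follows by dropping $\Vert\nabla\widehat u\Vert_{L^2}^2$ on the left of \eqref{eq:wpin} and dividing by $|s|$. The pointwise bound is obtained exactly as in the closing lines of the proof of Proposition~\ref{prop:th-domain-derivative}: insert the representation formula \eqref{eq:representation}, apply the pointwise estimates \eqref{eq:pwbound}, combine with the Dirichlet-to-Neumann estimate $\Vert\partial_\bfnu \widehat u\Vert_{H^{-1/2}(\Gamma)} \le C_\sigma |s|^2 (\real s)^{-1} \Vert\gamma \widehat u\Vert_{H^{1/2}(\Gamma)}$, and use the $L(s)$-estimate above. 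The accumulated exponents add to $(d-1)/2 + 9/2 = (d+8)/2$, as claimed.

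For the simplified formula \eqref{eq:FhHE}, assume $\Lu^i|_D\in H^2(D)$ satisfies $s^2\Lu^i-\Delta\Lu^i=0$ in $D$. Uniqueness of the interior and exterior Dirichlet problems then gives $\Lambda_D\gamma\Lu^i=\Lu^i|_D$ and $\Lambda_\Omega\gamma\Lu^i=-\Lu|_\Omega$, with $\Lu$ the scattered wave solving \eqref{eq:HHeq}. Taking normal derivatives in the direction pointing into $\Omega$ therefore yields
\begin{align*}
\bigl(\partial_{\bfnu,D}\Lambda_D - \partial_{\bfnu,\Omega}\Lambda_\Omega\bigr)\gamma\Lu^i \,=\, \partial_\bfnu(\Lu + \Lu^i),
\end{align*}
and \eqref{eq:obtNeumann} identifies this expression with $\Vop^{-1}(s)\gamma\Lu^i$. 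Consequently $L(s)\gamma\Lu^i = P_{\bfh\cdot\bfnu}\Vop^{-1}(s)\gamma\Lu^i$, and substituting into \eqref{eq:F_hGen} produces \eqref{eq:FhHE}.

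The step requiring most care is the pointwise estimate, where one has to combine three separate $s$-polynomial factors coming from the trace of a Neumann derivative $(|s|^2/\real s)$, the $L(s)$-bound $(|s|^{5/2}/(\real s)^{1/2})$, and the potential bound $(|s|^{(d-1)/2})$, and verify that they add to precisely $(d+8)/2$ with an overall $(\real s)^{-3/2}$ factor absorbed into $C_\sigma$. No step presents a substantial analytic obstacle; every factor of $|s|$ and $\real s$ descends directly from Proposition~\ref{prop:H2-bound} and the already established bounds \eqref{eq:wpin} and \eqref{eq:pwbound}.
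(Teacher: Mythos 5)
Your proposal is correct and follows essentially the same route as the paper: the boundary datum $L(s)\Lg$ is bounded in $H^{1/2}(\Gamma)$ via Proposition~\ref{prop:H2-bound} applied to $\Lambda_D\Lg$ and $\Lambda_\Omega\Lg$, the resulting exterior Dirichlet problem is estimated exactly as for $\Lu'$ in Proposition~\ref{prop:th-domain-derivative} (you pass through \eqref{eq:wpin} where the paper re-runs the Green-identity argument, a harmless streamlining yielding the same exponents), and the identity \eqref{eq:FhHE} is derived from $\Lambda_D\gamma\Lu^i=\Lu^i|_D$, $\Lambda_\Omega\gamma\Lu^i=-\Lu$ and \eqref{eq:obtNeumann}, precisely as in the paper.
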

\begin{proof}
The proof for the bounds can be done exactly as the proof of Proposition~\ref{prop:th-domain-derivative} by defining $\Lu' = \mathcal{F}_\bfh(s) \Lg$ for some $\Lg \in H^{3/2}(\Gamma)$ and proceeding as described in the proof.

For a function $\Lu^i$ as defined in the corollary, it holds that $\Lambda_D \gamma \Lu^i = \Lu^i$ by the uniqueness of the interior problem.
Therefore, by the definition of $L(s)$, the orientation of the unit normal $\bfnu$, \eqref{eq:HHeq} and \eqref{eq:obtNeumann}, it holds that
\begin{align*}
L(s)\gamma \Lu^i \, = \, P_{\bfh\cdot  \bfnu}\left(\partial_{\bfnu,D}\Lambda_D - \partial_{\bfnu,\Omega}\Lambda_\Omega\right)\gamma\Lu^i \, = \,
 P_{\bfh\cdot \bfnu}\partial_\bfnu(\Lu + \Lu^i) \, = \, P_{\bfh\cdot \bfnu}\Vop^{-1}(s) \gamma \Lu^i\, ,
\end{align*}
what shows the claimed representation for $\mathcal{F}_\bfh(s)\gamma \Lu^i$.
\end{proof}

\section{The domain derivative in the time domain}
In order to carry over the results from the frequency domain into the time domain, we use the setting of temporal Hilbert spaces, which we shortly introduce in the following. Here, we rely on the standard terminology surrounding spatio-temporal convolutions, as introduced in \cite{Bam86a} and then later in \cite{L94}. A recent introduction to the mathematical background of retarded boundary integral equations is \cite[Ch.\@ 2]{BanSay22}.
\subsection{Temporal convolutions and Hilbert spaces}
\label{subsec:Z}

Consider the analytic family of bounded linear operators  $\cqK(s):\cqX\to \cqY$, $\real s \geq \sigma>0$,
which are defined between Hilbert spaces $\cqX$ and $\cqY$.
Let $K$ be polynomially bounded, i.e.\@ let there exist a real $\kappa \in \R$  and $\nu\ge 0$, and for every $\sigma >0$ let there exist $M_\sigma <\infty$, such that
\begin{equation}\label{eq:pol_bound}
	\| \cqK(s) \|_{\cqY\leftarrow \cqX} \, \le \,  M_\sigma \frac{|s|^\kappa}{(\real s)^\nu}, \qquad \real s \, \geq \, \sigma \, >\,  0 \, .
\end{equation}
Any $K$ that fulfills this bound is the Laplace transform of a distribution of finite order of differentiation with support on the non-negative real half-line $t \ge 0$ (see also \cite[Cor.\@ 2.4]{BanSay22}). For a temporal function $g:[0,T]\to X$, which is sufficiently regular when extended by~$0$ on the negative real half-line, we define the Heaviside operational calculus notation by
\begin{equation} \label{Heaviside}
		K(\partial_t)g \, := \, \mathcal{L}^{-1}\{K\} * g \, .
	\end{equation}
The temporal convolution-type operator $K(\partial_t)$ defined in \eqref{Heaviside} acts on causal distributions with values in $X$. Applied to a temporal distribution $g$, the expression $K(\partial_t)g$ is a causal distribution with values in $Y$ and its Laplace transform is given by $\mathcal{L}\{K(\partial_t)g\}(s) = K(s)\mathcal{L}\{g\}(s)$.

The associativity of convolutions and the product rule of Laplace transforms moreover yields, for two families of operators $K(s)$ and $L(s)$ mapping into compatible spaces, the composition rule
	\begin{equation}\label{comp-rule}
		K(\partial_t)L(\partial_t)g \, =\, (KL)(\partial_t)g\, .
	\end{equation}
Let $X$ be a Hilbert space and let $H^r(\R,X)$ be the Sobolev space of order $r\in \R$ of $X$-valued functions on $\mathbb R$. Moreover, on finite intervals $(0, T )$,
	we write
\begin{align*}
H_0^r(0,T;X) \, := \, \{g|_{(0,T)} \,:\, g \in H^r(\R,X)\ \text{ with }\ g = 0 \ \text{ on }\ (-\infty,0)\} \, ,
\end{align*}
	where the
	subscript 0 in ${H_0^r}$ only refers to the left end-point of the interval.
	The norm on $H_0^r(0,T;X)$, which may be defined via a quotient norm, is equivalent to the norm $\Vert \partial_t^r \cdot \Vert_{L^2(0,T;X)}$.
	Moreover, by the Plancherel formula, this norm is equivalent to the Laplace domain interpretation
	\begin{align*}
	\left( \int_{\sigma + i\R} |s|^{2r}\left\Vert \mathcal{L}\{g\}(s) \right\Vert_X^2 \ds \right)^{1/2} \, \quad \text{for any } \sigma >0 \, .
	\end{align*}

	The temporal convolutional operator defined in \eqref{Heaviside} is also bounded by the Plancherel formula.
	We formulate this standard result here, for the convenience of the reader (see \cite[Lem.\@ 2.1]{L94}):
	Let $K(s)$ be bounded by \eqref{eq:pol_bound} in the half-plane $\text{Re }s > 0$. Then, $K(\partial_t)$ extends by density to a bounded linear operator from $H^{r+\kappa}_0(0,T;X)$ to $H^r_0(0,T;Y)$, which fulfills the bound
	\begin{equation*}
		\| K(\partial_t) \|_{ H^{r}_0(0,T;Y) \leftarrow H^{r+\kappa}_0(0,T;X)} \, \le \,  e M_{1/ T}
	\end{equation*}
	for arbitrary real $r$. The right-hand side follows from inserting $\sigma=1/T$ into the remaining factor of the Plancherel formula, which reads $e^{\sigma T} M_\sigma$. Pointwise estimates (in time) follow by using the continuous embedding $H^{k+\alpha}_0(0,T;X)\subset C^k([0,T];X)$, which holds for any integer $k\ge 0$ and $\alpha>1/2$.
With these notations we can define (generalized) solutions to the wave equation \eqref{eq:scatwave-1}-\eqref{eq:scatwave-2}. We formulate this in the next corollary, which is a consequence of \eqref{eq:wpin}, \eqref{eq:equiv} as well as \eqref{bound-V-m-1}, \eqref{eq:pwbound1} and Proposition~\ref{prop:H2-bound} (see also \cite[Prop.\@ 4.11]{BanSay22}).
\begin{cor}\label{cor:wp}
The unique solution of \eqref{eq:scatwave-1}-\eqref{eq:scatwave-2} can be explicitly written by using the
frequency domain identity \eqref{eq:SV-1-th}, the Heaviside notation \eqref{Heaviside} and the composition rule \eqref{comp-rule} as
\begin{align}\label{eq:SV-1-td}
u\,=\, -(\Sop\Vop^{-1})(\partial_t) \gamma u^i\, .
\end{align}
 For $r \in \R$ the continuous convolution-type operator $(SV^{-1})(\partial_t)$ has the mapping properties
\begin{subequations}
\begin{align}
&(\Sop\Vop^{-1})(\partial_t) : H_0^{r+3/2}(0,T; H^{1/2}(\Gamma)) \to  H_0^r(0,T; H^{1}(\Omega))\, , \label{eq:map1} \\
&(\Sop\Vop^{-1})(\partial_t)\cdot (\bfz) : H_0^{r+(d+3)/2}(0,T; H^{1/2}(\Gamma)) \to  H_0^r(0,T; \R) \, , \quad \bfz \in \Omega\, , \label{eq:map2} \\
&(\Sop\Vop^{-1})(\partial_t) : H_0^{r+5/2}(0,T; {H}^{3/2}(\Gamma)) \to  H_0^r(0,T; H^{2}(\Omega)) \label{eq:map3}\, .
\end{align}
\end{subequations}
\end{cor}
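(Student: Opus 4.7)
The plan is to deduce the corollary from the Laplace-domain representation \eqref{eq:SV-1-td} together with the abstract transference principle of \cite[Lem.\@ 2.1]{L94} restated just above the corollary. Since all ingredients for polynomial bounds on $\Sop(s)\Vop^{-1}(s)$ in the three target norms are already available from Section~2, the work amounts to collecting the right exponents $\kappa$ and $\nu$ for each mapping and invoking the transference result with $K(s) = \Sop(s)\Vop^{-1}(s)$.

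First I would establish the representation itself. Taking the Laplace transform of \eqref{eq:scatwave-1}-\eqref{eq:scatwave-2} and using that $u^i$ has support disjoint from $\Gamma$ at $t=0$ (so the initial conditions for $u$ vanish) yields precisely the Helmholtz problem \eqref{eq:HHeq} for $\Lu = \mathcal{L}\{u\}$. By \eqref{eq:SV-1-th} this gives $\Lu = -(\Sop(s)\Vop^{-1}(s))\gamma \Lu^i$ in the Laplace domain, and the composition rule \eqref{comp-rule} then produces \eqref{eq:SV-1-td}. Uniqueness of $u$ is inherited from the (Laplace-domain) uniqueness of \eqref{eq:HHeq} that was already used to derive \eqref{eq:SV-1-th}.

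Next I would verify the three polynomial bounds in the form \eqref{eq:pol_bound} required by the Lubich lemma. For \eqref{eq:map1}, the bound \eqref{eq:wpin} gives $\|\Sop(s)\Vop^{-1}(s)\|_{H^1(\Omega)\leftarrow H^{1/2}(\Gamma)}$ controlled by $C_\sigma |s|^{3/2}/\real s$ after using the norm equivalence \eqref{eq:equiv} (the factor $\min\{1,|s|\}^{-1}$ is harmless for $\real s\ge\sigma>0$ and absorbs into the constant). This yields $\kappa=3/2$ and hence the loss of $3/2$ derivatives in time claimed in \eqref{eq:map1}. For \eqref{eq:map3}, Proposition~\ref{prop:H2-bound} directly supplies the bound $C_\sigma |s|^{5/2}/(\real s)^{1/2}$ for $\Sop(s)\Vop^{-1}(s): H^{3/2}(\Gamma)\to H^2(\Omega)$, giving $\kappa=5/2$ as in \eqref{eq:map3}. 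For \eqref{eq:map2}, composing the pointwise estimate \eqref{eq:pwbound1} with the $\Vop^{-1}(s)$-bound \eqref{bound-V-m-1} yields
\begin{equation*}
\left|(\Sop(s)\Vop^{-1}(s)g)(\bfz)\right| \,\le\, C(\sigma,\mathrm{dist}(\bfz,\Gamma))\, |s|^{(d+3)/2}/\real s \,\|g\|_{H^{1/2}(\Gamma)},
\end{equation*}
where the factor $e^{-\mathrm{dist}(\bfz,\Gamma)\real s}\le 1$ is absorbed. This gives $\kappa=(d+3)/2$.

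Finally, applying \cite[Lem.\@ 2.1]{L94} with these three $(\kappa,\nu)$-pairs yields the three claimed continuous extensions $(\Sop\Vop^{-1})(\partial_t)$ between the Bochner--Sobolev spaces stated in \eqref{eq:map1}--\eqref{eq:map3}. I do not expect any genuine obstacle here; the only minor subtlety is the bookkeeping that turns the $s$-dependent norm $\|\cdot\|_{|s|,\Omega}$ of \eqref{eq:wpin} into an $H^1$-bound via \eqref{eq:equiv} without spoiling the polynomial exponent, and the observation that the exponential factor in \eqref{eq:pwbound1} is uniformly bounded on $\real s \ge \sigma$ so it can be hidden in the constant $M_\sigma$ of \eqref{eq:pol_bound}.
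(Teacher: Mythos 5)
Your proposal is correct and follows exactly the route the paper intends: the paper gives no separate proof of Corollary~3.1 beyond citing \eqref{eq:wpin}, \eqref{eq:equiv}, \eqref{bound-V-m-1}, \eqref{eq:pwbound1} and Proposition~2.1 as the Laplace-domain bounds to be fed into the transference result of \cite[Lem.\@ 2.1]{L94}, which is precisely what you do. Your exponent bookkeeping ($\kappa=3/2$, $5/2$, and $(d+3)/2$ respectively) matches the stated mapping properties.
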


\subsection{From frequency to time domain: The temporal domain derivative}
We are now in the position to define the temporal domain derivative. To be consistent with the frequency domain setting, we use a similar terminology as the one introduced in Section~\ref{sec:th-dom-deriv}. 
We consider the boundary-to-solution operator in the time-domain $X : \Gamma \mapsto (u(\bfz_j))_{j=1,\dots,M}$, where $u$ solves \eqref{eq:scatwave-1} together with \eqref{eq:scatwave-2}. Then, let $D(F_\Gamma)$ denote a neighborhood of the zero function in $C^1(\Gamma,\R^d)$ that is so small that
\begin{align}\label{eq:FGamma}
	F_\Gamma : D(F_\Gamma) \subset C^1(\Gamma,\R^d) \to (L^2(0,T))^M\, , \qquad F_\Gamma(\bfh) \, := \, X(\Gamma_\bfh)
\end{align}
is well-defined, where $\Gamma_\bfh$ is defined as in Section~\ref{sec:th-dom-deriv}.
We denote by $F_\Gamma'(0) :  C^1(\Gamma,\R^d) \to (L^2(0,T))^M$ the Fr\'echet derivative of $F_\Gamma$ at zero satisfying
\begin{align*}
	\frac{1}{\left\Vert \bfh \right\Vert_{C^1(\Gamma)}}\left\Vert F_\Gamma(\bfh) - F_\Gamma(0) - F_\Gamma'(0)\bfh \right\Vert_{(L^2(0,T))^M} \, \to \, 0\, , \quad \text{as } \left\Vert \bfh \right\Vert_{C^1(\Gamma)} \to 0 \, .
\end{align*}
As in the Laplace domain, this Fr\'echet derivative may be characterized using the temporal domain derivative $u'$, which is the solution to a time-dependent scattering problem. In the following, we discuss this scattering problem and the implications for the regularity of the time-dependent domain derivative.
\begin{proposition}\label{prop:td-dom-deriv}
	Let $d\in\{2,3\}$, $D$ be a bounded $C^2$ domain, $\bfh\in C^1(\Gamma,\mathbb R^d)$ and for $r\geq 0$ let $u^i\in H_0^{r+(d+8)/2}(0,T; H_{\mathrm{loc}}^{2}(\R^d))$.
	Moreover, let $u\in H_0^{r+(d+3)/2}(0,T; H^2(\Omega))$ be the unique solution of \eqref{eq:scatwave-1}--\eqref{eq:scatwave-2}. Then, the Fr\'echet derivative of $F_\Gamma$ at zero exists and $F_\Gamma'(0)\bfh$ is given by the solution $u'\in H_0^{r+d}(0,T;H^1(\Omega))$ of
	\begin{align}\label{eq:td_dd}
		\begin{split}
			\partial_t^2 u' - \Delta u'   \,=\, 0  \quad &\text{in } \Omega \times [0,T]\, , \\
			u' \, = \, -\left(\bfh \cdot \bfnu\right)\partial_\bfnu(u+u^i) \quad &\text{on } \Gamma \times [0,T] \, ,
		\end{split}
	\end{align}
	evaluated at $\bfz_j$, $j=1,\dots,M$, i.e.\@ $F_\Gamma'(0) \bfh = (u'(\bfz_j))_{j=1,\dots,M}$.
	The function $u'$ is called the temporal domain derivative.
\end{proposition}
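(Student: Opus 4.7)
The plan is to lift Proposition~\ref{prop:th-dom-deriv-bounds} to the time domain via the Heaviside operational calculus of Section~3.1. Motivated by \eqref{eq:F_h-ui}, I would define the candidate temporal derivative
\begin{align*}
u' \,:=\, \mathcal{F}_\bfh(\partial_t)\,\gamma u^i \qquad \text{and} \qquad F_\Gamma'(0)\bfh \,:=\, \bigl(u'(\bfz_j)\bigr)_{j=1,\dots,M},
\end{align*}
where the right-hand side is interpreted through the notation \eqref{Heaviside} applied to the operator family $\mathcal{F}_\bfh(s)$ from \eqref{eq:F_hGen}.

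First I would check that $u'$ lies in an appropriate temporal Sobolev space. The frequency-explicit bounds collected in the corollary at the end of Section~2 place $\mathcal{F}_\bfh(s)$ in the framework \eqref{eq:pol_bound}, and the Plancherel-based boundedness result recalled in Section~3.1 then yields that $\mathcal{F}_\bfh(\partial_t)$ maps $\gamma u^i$ into a temporal Bochner space of the claimed type, provided the temporal regularity exponent $r+(d+8)/2$ dominates the polynomial factors of \eqref{eq:esth1} and \eqref{eq:estpw}. The pointwise estimate \eqref{eq:estpw} in particular guarantees that $u'(\bfz_j)\in H_0^r(0,T)\subset L^2(0,T)$ and that $\bfh\mapsto F_\Gamma'(0)\bfh$ is linear and bounded from $C^1(\Gamma,\R^d)$ to $(L^2(0,T))^M$.

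Second I would identify $u'$ as the unique solution of \eqref{eq:td_dd}. Laplace transforming and using the composition rule \eqref{comp-rule} gives $\widehat{u'} = \mathcal{F}_\bfh(s)\gamma \Lu^i$, which by Proposition~\ref{prop:th-dom-deriv-bounds} satisfies \eqref{eq:HHUp} for every $s$ in the right half-plane. Inverse Laplace transforming each equation of \eqref{eq:HHUp}, together with the injectivity of the Laplace transform, produces the wave equation and the inhomogeneous Dirichlet condition of \eqref{eq:td_dd}; the right-hand side $-(\bfh\cdot\bfnu)\partial_\bfnu(u+u^i)$ is meaningful as a trace thanks to the regularity hypotheses on $u$ and $u^i$. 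Uniqueness follows from Corollary~\ref{cor:wp} applied to the boundary datum.

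Third, the Fr\'echet differentiability is obtained by transferring the quadratic Laplace-domain residual estimate \eqref{eq:ddest} into the time domain. The exponential factor $e^{-d_{\min}\real s}$ in \eqref{eq:ddest} guarantees that the residual is the Laplace transform of a causal distribution, and together with the polynomial growth $|s|^{(d+5)/2}/(\real s)^3$ it satisfies \eqref{eq:pol_bound}. Applying the Plancherel bound of Section~3.1 componentwise yields
\begin{align*}
\bigl\|F_\Gamma(\bfh)-F_\Gamma(0)-F_\Gamma'(0)\bfh\bigr\|_{(L^2(0,T))^M}
\,\le\, C\,\|u^i\|_{H_0^{\alpha}(0,T;H^1(D_1\setminus \overline D))}\,
\|\bfh\|_{C^1(\Gamma)}^2
\end{align*}
for a suitable $\alpha$ depending on $d$, which is finite under the hypothesis on $u^i$. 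Dividing by $\|\bfh\|_{C^1(\Gamma)}$ and letting $\|\bfh\|_{C^1(\Gamma)}\to 0$ then establishes that $F'_\Gamma(0)$ is indeed the Fr\'echet derivative, and the identification $F'_\Gamma(0)\bfh=(u'(\bfz_j))_j$ is automatic from the previous steps.

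The main obstacle is the bookkeeping of temporal and spatial regularity shifts. The Plancherel passage requires the chosen regularity index of $u^i$ to dominate the polynomial growth exponents uniformly on the half-plane $\real s \ge \sigma$, and the right-hand side of \eqref{eq:ddest} is measured in an interior $H^1$-norm of $u^i$ rather than in a boundary trace norm, which must be reconciled with the trace-based Plancherel estimates of Section~3.1. Verifying that the stated hypothesis $u^i\in H_0^{r+(d+8)/2}(0,T;H_{\mathrm{loc}}^2(\R^d))$ simultaneously controls the well-posedness, the PDE identification, and the quadratic residual is the delicate part of the argument.
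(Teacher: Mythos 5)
Your proposal is correct and follows essentially the same route as the paper: the regularity of $u'$ is obtained from the Laplace-domain operator bounds via the Plancherel-based mapping properties of temporal convolution operators, and the Fr\'echet differentiability is obtained by transferring the quadratic residual estimate \eqref{eq:ddest} to the time domain with the Plancherel formula at $\sigma=1/T$. The bookkeeping you flag as delicate is resolved in the paper exactly as you anticipate: the trace theorem and the mapping properties \eqref{eq:map1}--\eqref{eq:map3} give $u'\in H_0^{r+d}(0,T;H^1(\Omega))$ and $u'(\bfz_j)\in H_0^r(0,T)$, and the residual is controlled by $\Vert u^i\Vert_{H_0^{(d+5)/2+r}(0,T;H^1(D_1\setminus\overline D))}$, which is dominated by the stated hypothesis on $u^i$.
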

\begin{proof}
 By the trace theorem, we obtain that $\gamma u^i\in H_0^{r+(d+8)/2}(0,T; H^{3/2}(\Gamma))$. The solution $u$ of \eqref{eq:scatwave-1}-\eqref{eq:scatwave-2} is, by \eqref{eq:map3},  in $H_0^{r+(d+3)/2}(0,T; H^{2}(\Omega))$. Thus, 
$\left(\bfh \cdot \bfnu\right)\partial_\bfnu(u+u^i) \in H_0^{r+(d+3)/2}(0,T; H^{1/2}(\Gamma))$ and \eqref{eq:map1} yields that $u' \in H_0^{r+d}(0,T; H^1(\Omega))$. 
Moreover, to study pointwise evaluations in $\Omega$, we use \eqref{eq:map2} to see that $u'(\bfz_j)\in H_0^r(0,T;\R)$ for $j=1,\dots,M$.

We apply the Plancherel formula and obtain, for any $\sigma>0$, that
\begin{multline*}
	\left\Vert F_\Gamma(\bfh) - F_\Gamma(0) - (u'(\bfz_j))_{j=1,\dots,M}\right\Vert_{(H^r(0,T))^M}^2
	 \\  \,\leq\,
	\rme^{2\sigma T}
\int_{\sigma +i\mathbb R}|s|^{2r}\left|\widehat{F}_\Gamma(\bfh) -\widehat{F}_\Gamma(0) - \widehat{F}_\Gamma'(0)\bfh \right|^2\ds\, ,
\end{multline*}
where we used that $(\widehat{u}'(\bfz_j))_{j=1,\dots,M} = \widehat{F}_\Gamma'(0)\bfh$, which holds by Proposition~\ref{prop:th-dom-deriv-bounds}, for $\Vert \bfh \Vert_{C^1(\Gamma)}<h_0$. 
We choose $\sigma=1/T$, use the bound \eqref{eq:ddest} and apply the Plancherel formula once more, to obtain
\begin{multline*}
	\frac{\rme^{2\sigma T}}{\left\Vert \bfh \right\Vert^2_{C^1(\Gamma)}}
\int_{\sigma +i\mathbb R}|s|^{2r}\left|\widehat{F}_\Gamma(\bfh) -\widehat{F}_\Gamma(0) - \widehat{F}_\Gamma'(0)\bfh \right|^2\ds \\
\, \leq \, C_T
	\Vert \bfh \Vert_{C^1(\Gamma)}^2	\int_{\sigma +i\mathbb R}  \left\Vert s^{(d+5)/2+r} \Lu^i \right\Vert_{H^{1}(D_1\setminus \overline{D})}^2  \ds
	 \, \leq \,  C_T
	\Vert \bfh \Vert_{C^1(\Gamma)}^2 \left\Vert u^i \right\Vert^2_{H_0^{(d+5)/2+r}(0,T; H^{1}(D_1\setminus \overline{D})) }.
\end{multline*}
The constant $C_T$ depends only polynomially on the final time $T$.
Taking the limit $\Vert\bfh\Vert_{C^1(\Gamma)}\rightarrow 0$ on the right-hand side shows the desired property.
\end{proof}
The frequency dependent bounds of Proposition~\ref{prop:th-domain-derivative} from the previous section are directly transferred to the time-domain
by using the same techniques that we applied in the proof of Proposition~\ref{prop:td-dom-deriv}.
 \begin{thm}\label{th:td-dd-bound}
 Let $D$ be a bounded domain, for which the boundary $\Gamma=\partial D$ is at least $C^2$ and $\bfh\in C^1(\Gamma,\mathbb R^d)$. Let $u'$ denote the solution to  \eqref{eq:td_dd}. Then, the following bound holds in the natural $H^1$-norm for any $r\geq 0$
	\begin{align*}
		\left\Vert u' \right\Vert_{H^r(0,T; H^1(\Omega))} \, \le \,  C_T \left\Vert  \gamma u^i \right\Vert_{H^{r+4}(0,T;H^{3/2}(\Gamma) )} \, .
	\end{align*}
	Moreover, we have the following bound with respect to the $L^2$-norm 
	\begin{align*}
		\left\Vert u' \right\Vert_{H^r(0,T; L^2(\Omega))} \, \le \, C_T \left\Vert \gamma u^i \right\Vert_{H^{r+3}(0,T;H^{3/2}(\Gamma) )} \, .
	\end{align*}
	The constant $C_T$ in those estimates depends only on the boundary $\Gamma$ and polynomially on the final time $T$.
	Finally, for any point $\bfz \in \Omega$ away from the boundary, we have the estimate
	\begin{align*}
		\left\Vert u' (\bfz) \right\Vert_{H^r(0,T)}
		\, \le \,  C_T	 \left\Vert \gamma u^i \right\Vert_{H^{r+(d+8)/2}(0,T;H^{3/2}(\Gamma) )} \, 
	\end{align*}
	for $d \in \{2,3\}$. The constant $C_T$ depends only on the boundary $\Gamma$, the distance of $\bfz$ to the boundary and polynomially on the final time $T$.
\end{thm}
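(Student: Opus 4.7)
The plan is to lift the three frequency-explicit bounds of Proposition~\ref{prop:th-domain-derivative} into the time domain by the same Plancherel argument that proves Proposition~\ref{prop:td-dom-deriv}. Since $\Lu' = \mathcal{L}\{u'\}$ (as implicitly identified in Proposition~\ref{prop:td-dom-deriv}), each Laplace-domain bound of the form
\begin{align*}
\|\Lu'(s)\|_Y \, \leq \, C_\sigma \frac{|s|^\kappa}{(\real s)^\nu} \|\gamma \Lu^i(s)\|_{H^{3/2}(\Gamma)}, \qquad \real s \geq \sigma > 0,
\end{align*}
fits the framework of Section~\ref{subsec:Z}: the analytic family $s \mapsto \Lu'(s)$ satisfies \eqref{eq:pol_bound}, and hence the solution operator $\gamma u^i \mapsto u'$ extends by density to a bounded map $H^{r+\kappa}_0(0,T; H^{3/2}(\Gamma)) \to H^r_0(0,T;Y)$.

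Concretely, the three desired estimates follow by applying Plancherel on the vertical line $\real s = \sigma$ with the choice $\sigma = 1/T$, giving
\begin{align*}
\|u'\|_{H^r_0(0,T;Y)}^2 \, \leq \, e^{2\sigma T} \int_{\sigma + i\R} |s|^{2r} \|\Lu'(s)\|_Y^2 \, \dif s,
\end{align*}
and inserting the three respective bounds of Proposition~\ref{prop:th-domain-derivative} with $(\kappa,\nu,Y) = (4,3/2,H^1(\Omega))$, $(3,3/2,L^2(\Omega))$, and $((d+8)/2,0,\R)$ for the pointwise case. In each instance the factor $(\real s)^{-2\nu}$ is bounded by $T^{2\nu}$ on the contour, so the remaining integral reduces, up to the absorbed constant $e^{2\sigma T}T^{2\nu}$, to
\begin{align*}
\int_{\sigma + i\R} |s|^{2(r+\kappa)} \|\gamma \Lu^i(s)\|_{H^{3/2}(\Gamma)}^2 \, \dif s,
\end{align*}
which by Plancherel in reverse equals (a constant multiple of) $\|\gamma u^i\|_{H^{r+\kappa}_0(0,T;H^{3/2}(\Gamma))}^2$. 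This produces exactly the three claimed estimates, with $C_T$ depending polynomially on $T$.

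For the pointwise bound the only additional observation is that the exponential $e^{-\mathrm{dist}(\bfz,\Gamma) \real s}$ in \eqref{eq:estpw} is bounded (actually decaying) on the contour $\real s = 1/T$ and is therefore absorbed into $C_T$ together with the distance-dependent prefactor, while the $|s|^{(d+8)/2}$ loss dictates the regularity $H^{r+(d+8)/2}$ required of $\gamma u^i$. There is no genuinely new ingredient beyond what appears in Proposition~\ref{prop:td-dom-deriv}; the main point to check is simply that the exponents $(\kappa,\nu)$ inherited from Proposition~\ref{prop:th-domain-derivative} line up with the regularity claims, which they do. A minor notational point is that the theorem writes $H^r$ rather than $H^r_0$, but since the admissible $u^i$ and hence $u'$ are causal, these norms agree on the relevant functions and no further argument is needed.
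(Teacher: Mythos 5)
Your proposal is correct and is essentially the paper's own argument: the paper itself only states that the bounds of Proposition~\ref{prop:th-domain-derivative} are ``directly transferred to the time-domain by using the same techniques'' as in the proof of Proposition~\ref{prop:td-dom-deriv}, i.e.\ the Plancherel argument on the line $\real s=1/T$ applied to the operator family $\mathcal{F}_\bfh(s)$, which is precisely what you carry out. Your identification of the exponents $(\kappa,\nu)$ in the three cases and your handling of the $(\real s)^{-\nu}$ and exponential factors match the intended proof.
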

Finally, by using the operator $\Fcal_\bfh$ from \eqref{eq:F_h-ui} and the representation formula in \eqref{eq:FhHE} we can write the domain derivative $u'$ from \eqref{eq:td_dd} as a convolution-type operator. 

\begin{cor}
For $\gamma u^i \in H_0^r(0,T; {H}^{3/2}(\Gamma))$
the temporal domain derivative $u'$ from \eqref{eq:td_dd} can be explicitly written by 
using the
frequency domain identities \eqref{eq:F_h-ui}, \eqref{eq:FhHE}, the Heaviside notation \eqref{Heaviside} and the composition rule \eqref{comp-rule} as
\begin{align}\label{eq:F_h-t}
u' \,=\,
	\mathcal{F}_{\bfh}(\partial_t)\gamma	u^i \, = \, 
	(SV^{-1}P_{\bfh\cdot\bfnu}V^{-1})(\partial_t) \gamma u^i
	\, .
\end{align}
For $r \geq 0$ the continuous convolution-type operator $\mathcal{F}_{\bfh}(\partial_t)$ has the mapping properties
\begin{align*}
&\mathcal{F}_{\bfh}(\partial_t) : H_0^{r+4}(0,T; {H}^{3/2}(\Gamma))) \to  H_0^r(0,T; H^{1}(\Omega))\, ,  \\
&\mathcal{F}_{\bfh}(\partial_t): H_0^{r+3}(0,T; {H}^{3/2}(\Gamma))) \to  H_0^r(0,T; L^2(\Omega)) \, \, , \\
&\mathcal{F}_{\bfh}(\partial_t)\cdot (\bfz) : H_0^{r+(d+8)/2}(0,T; {H}^{3/2}(\Gamma))) \to  H_0^r(0,T; \R) \,, \quad \bfz \in \Omega\,   .
\end{align*}
\end{cor}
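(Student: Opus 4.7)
The plan is to lift the Laplace-domain identity established in Section~2 to the time domain via the inverse Laplace transform and the composition rule \eqref{comp-rule}, and then read off the three mapping properties from the three Laplace-domain bounds on $\mathcal{F}_\bfh(s)$ provided in the corollary at the end of Section~2.4 using the Plancherel-based convolution calculus recalled in Section~\ref{subsec:Z}. No new estimates are needed; the task is to assemble previously established pieces.

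For the representation identity, I would take the Laplace transform of the time-domain scattering problem \eqref{eq:td_dd}. The regularity supplied by Proposition~\ref{prop:td-dom-deriv} ensures that $\Lu'$ solves \eqref{eq:HHUp} pointwise in $s\in\C_+$, so Proposition~\ref{prop:th-dom-deriv-bounds} applies and $\Lu' = \mathcal{F}_\bfh(s)\gamma\Lu^i$. Since $u^i$ satisfies the wave equation on all of $\R^d$, the restriction $\Lu^i|_D$ is a Helmholtz solution in $D$, so the simplified representation \eqref{eq:FhHE} is available, giving $\Lu' = \Sop(s)\Vop^{-1}(s)P_{\bfh\cdot\bfnu}\Vop^{-1}(s)\gamma\Lu^i$. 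Each factor is polynomially bounded in the sense of \eqref{eq:pol_bound}: $\Vop^{-1}(s)$ by \eqref{bound-V-m-1}, $P_{\bfh\cdot\bfnu}$ trivially as an $s$-independent multiplication operator, and $\Sop(s)$ through the frequency-explicit mapping into $H^1(\Omega)$ that underlies \eqref{eq:wpin}. Iteratively applying the composition rule \eqref{comp-rule} under the inverse Laplace transform then produces
\begin{equation*}
u' = (\Sop\Vop^{-1}P_{\bfh\cdot\bfnu}\Vop^{-1})(\partial_t)\gamma u^i = \mathcal{F}_\bfh(\partial_t)\gamma u^i,
\end{equation*}
which is the claimed identity \eqref{eq:F_h-t}.

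For the three mapping properties I would invoke the standard Plancherel-type bound stated in Section~\ref{subsec:Z}: any family $K(s)$ satisfying $\|K(s)\| \leq M_\sigma |s|^\kappa/(\real s)^\nu$ extends to a bounded operator $K(\partial_t) : H_0^{r+\kappa}(0,T;X) \to H_0^r(0,T;Y)$ for all real $r$. The corollary at the end of Section~2.4 furnishes exactly the three bounds required. The bound $C_\sigma|s|^4/(\real s)^{3/2}$ from $H^{3/2}(\Gamma)$ into $H^1(\Omega)$ delivers the shift $\kappa=4$; the bound $C_\sigma|s|^3/(\real s)^{3/2}$ into $L^2(\Omega)$ delivers $\kappa=3$; and the pointwise bound $C(\sigma,\mathrm{dist}(\bfz,\Gamma))|s|^{(d+8)/2}e^{-\mathrm{dist}(\bfz,\Gamma)\real s}$ at $\bfz\in\Omega$ delivers $\kappa=(d+8)/2$, the exponential factor only contributing a bounded multiplicative constant when setting $\sigma = 1/T$.

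The main subtlety, rather than a serious obstacle, is verifying that the operator composition and the interior/exterior trace identifications are consistent distributionally so that \eqref{comp-rule} legitimately applies to each factor of the product $\Sop\Vop^{-1}P_{\bfh\cdot\bfnu}\Vop^{-1}$. This is immediate from the polynomial bounds already established for each factor and from the uniqueness of the Laplace transform for causal distributions of finite order of differentiation, noting that the composite identity $\Lu' = \mathcal{F}_\bfh(s)\gamma\Lu^i$ already encodes the relevant frequency-domain compatibility.
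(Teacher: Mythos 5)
Your proposal is correct and follows exactly the route the paper intends: the corollary is stated without a separate proof precisely because it is the assembly of the frequency-domain identity \eqref{eq:FhHE}, the composition rule \eqref{comp-rule}, and the Plancherel-based transfer lemma of Section~3.1 applied to the three bounds on $\mathcal{F}_\bfh(s)$ from the corollary at the end of Section~2, with the shifts $\kappa=4$, $3$, and $(d+8)/2$ read off as you describe. Your handling of the exponential factor in the pointwise bound (absorbed into the constant for $\real s\geq\sigma>0$) and of the hypothesis that $\Lu^i|_D$ is an interior Helmholtz solution is consistent with the paper's setup.
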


\section{Semi-discretization in time by Runge--Kutta CQ}

\subsection{Recap: Runge--Kutta convolution quadrature}

We give a brief introduction on the approximation of temporal convolutions $K(\partial_t)g$ by the convolution quadrature method based on Runge--Kutta multistage methods. Consider an $m$-stage implicit Runge--Kutta  discretization of the initial value problem $y' = f(t,y)$, $y(0) = y_0$. With the constant time step size $\tau>0$, we aim to compute approximations $y^n$ to $y(t_n)$ at equidistant time points $t_n = n\tau$. Simultaneously, the method computes approximations at the internal stages $Y^{ni}$ approximating $y(t_n + \boldsymbol{c}_i \tau)$, by solving the system
\begin{equation*}
	\begin{aligned}
		Y^{ni} &\,=\, y^n + \tau \sum_{\ell = 1}^m a_{i\ell} f(t_n+c_\ell\tau,Y^{n\ell}), \qquad i =
		1,\dotsc,m \, ,\\
		y^{n+1} & \,=\, y^n + \tau \sum_{\ell = 1}^m b_\ell f(t_n+c_\ell\tau,Y^{n\ell}) \, .
	\end{aligned}
\end{equation*}
Details on Runge--Kutta methods can be found e.g. in \cite{HairerWannerII}. 
The scheme is fully determined by its coefficients, which are denoted by
\begin{equation*}
	\mathscr{A} = (a_{ij})_{i,j = 1}^m \,, \quad b = (b_1,\dotsc,b_m)^T\, ,
	\quad \text{and} \quad c = (c_1,\dotsc,c_m)^T \,.
\end{equation*}
The function $R(z) = 1 + z b^T ( I - z \mathscr{A})^{-1} \mathbbm{1}$, where $\mathbbm{1} = (1,1,\dotsc,1)^T \in \R^m$, is referred to as the stability function of the Runge--Kutta method. We always assume that $\mathscr{A}$ is invertible.

Convolution quadrature methods can be constructed with Runge--Kutta methods and are, in many settings, more efficient than their BDF-based counterparts (see e.g. \cite{B10,BLM11}).
An excellent book providing an overview of key results in the field and recent developments is \cite{BanSay22}.

Let $\cqK(s):\cqX\to \cqY$, $\real s \geq \sigma>0$ be an analytic family of bounded linear operators between Hilbert spaces $\cqX$ and $\cqY$, that satisfies \eqref{eq:pol_bound}.
By the results described in Section~\ref{subsec:Z}, this yields a temporal convolution operator $\cqK(\partial_t):H^{r+\kappa}_0(0,T;\cqX) \to \cqH^{r}_0(0,T;\cqY)$ for arbitrary real $r$. Consider now a time-dependent function $\cqg:[0,T]\to \cqX$ that is, together with its extension by 0 to the negative real half-axis $t<0$, sufficiently regular for the expression \eqref{Heaviside} to be well-defined. 
We approximate the convolution $(\cqK(\partial_t)\cqg)(t)$ at the discrete times  
$$
\vtn = (t_n+c_\ell \tau)_{\ell=1}^m\, ,\text{ where } t_n=n\tau\, ,
$$
i.e., at the equidistant time points, which are the stages of the underlying Runge--Kutta method. 

The Runge--Kutta differentiation symbol reads
\begin{equation*}
	\Delta(\zeta) \, := \, \Bigl(\mathscr{A}+\frac\zeta{1-\zeta}\mathbbm{1} \cqb^T\Bigr)^{-1} \in \C^{m \times m}\, , \qquad
	\zeta\in\C \hbox{ with } |\zeta|<1\, .
\end{equation*}
As a consequence of the Sherman--Woodbury formula, this expression is well defined for $|\zeta|<1$ if $R(\infty)=1-b^T\mathscr{A}^{-1}\bone$ satisfies $|R(\infty)|\le 1$. For A-stable Runge--Kutta methods (e.g.\@ the Radau IIA methods), the eigenvalues of the matrices $\Delta(\zeta)$ have positive real part for $|\zeta|<1$ (see \cite[Lem.\@ 3]{BLM11}).
The Sherman--Morrison formula then yields the expression
\begin{equation*}
	\Delta(\zeta) \,=\, \mathscr{A}^{-1} -\frac{\zeta}{1-R(\infty)\zeta} \mathscr{A}^{-1} \bone b^T \mathscr{A}^{-1}\, .
\end{equation*}
We are now in a position to define the convolution quadrature weights ${\cqW}_n(\cqK):\cqX^m \to \cqY^m$.

We replace the complex argument $s$ in $\cqK(s)$ by the matrix-valued analytic function $\Delta(\zeta)/\tau$ and write down the power series expansion
\begin{equation*}
	\cqK\Bigl(\frac{\Delta(\zeta)}\tau \Bigr) \,=\, \sum_{n=0}^\infty {\boldsymb W}_n(\cqK) \zeta^n\,.
\end{equation*}
 In the following, we use an upper index to denote a sequence element with $m$ components.
Thus, for a sequence $\cqg=(\cqg^n)$ with $\cqg^n=(\cqg^n_\ell)_{\ell=1}^m\in \cqX^m$ we arrive at the discrete convolution denoted by
\begin{equation}\label{rkcq}
	\bigl(\cqK(\underline{\partial_t^\tau}) \boldsymb g \bigr)^n \, := \, \sum_{j=0}^n {\boldsymb W}_{n-j}(\cqK) \boldsymb g^j \in \cqY^m \,.
\end{equation}
The notation $\cqK(\underline{\partial_t^\tau}) \boldsymb g$ in \eqref{rkcq} indicates that the resulting vector contains approximations at the stages $\vtn$.
For functions $\boldsymb g:[0,T]\to \cqX$, we use this notation for the vectors $\cqg^n = \cqg(\vtn) = \bigl(\cqg(t_n+c_i\tau)\bigr)_{i=1}^m$ of values of $\cqg$.
The $\ell$-th component of the vector $\bigl(\cqK(\underline{\partial_t^\tau}) \boldsymb g \bigr)^n$, that we denote by $\bigl(\cqK(\underline{\partial_t^\tau}) \boldsymb g \bigr)^{n,\ell}$, is then an approximation to $\bigl(\cqK(\partial_t)\cqg\bigr)(t_n+c_\ell\tau)$, i.e.\@ $\bigl(\cqK( \underline{\partial_t^\tau}) \boldsymb g \bigr)^{n,\ell} \approx \bigl(\cqK(\partial_t) \cqg \bigr)(t_{n}+c_\ell \tau)$
 (see \cite[Thm.\@ 4.2]{BL19}). 
In particular, if $c_m = 1$, as is the case with stiffly stable Runge--Kutta methods, which includes the Radau IIA methods,
the continuous convolution at $t_{n}$ is approximated by the $m$-th, i.e.~last component of the $m$-vector \eqref{rkcq} for $n-1$:
\begin{equation*}
	 \left(\cqK( \partial_t^\tau) \boldsymb g\right)_n \, := \,  \bigl(\cqK( \underline{\partial_t^\tau}) \boldsymb g \bigr)^{n-1,m} \in \cqY \, . 
\end{equation*}
These components approximate the continuous convolution at the equidistant time points $t_n$, i.e. $ \left(\cqK( \partial_t^\tau) \boldsymb g\right)_n\approx\bigl(\cqK(\partial_t) \cqg \bigr)(t_{n}) $.

A property that is key to show stability in many settings is that the composition rule~\eqref{comp-rule} is preserved under this discretization: For two compatible operator families $\cqK(s)$ and $\cqL(s)$, we have
\begin{equation*}
	\cqK(\underline{\partial_t^\tau})\cqL(\underline{\partial_t^\tau})\cqg \, =\,  (\cqK\cqL)(\underline{\partial_t^\tau})\cqg \, .
\end{equation*}
Such a property can only be formulated for the vector valued discrete convolution, which includes the approximations at the stages $\vtn$, but can not be formulated for the approximation $\cqK( \partial_t^\tau) \boldsymb g $ at the equidistant time points $t_n$.

The following error bound for Runge--Kutta convolution quadrature from~\cite[Thm.\@ 3]{BLM11}, here directly stated for the Radau IIA methods \cite[Sec.~IV.5]{HairerWannerII} and transferred to a Hilbert space setting, will be the basis for our error bounds of the  time discretization.

\begin{lem}
	\label{lem:RK-CQ}
	Let $\cqK(s):\cqX\to \cqY$, $\real s \geq \sigma>0$,  be an analytic family of linear operators between Banach spaces $\cqX$ and $\cqY$ satisfying
	the bound \eqref{eq:pol_bound} with exponents $\kappa$ and $\nu$.
	Consider the Runge--Kutta convolution quadrature based on the Radau IIA method with $m$ stages. Let $r>\max(2m+\kappa,2m-1,m+1)$ and further let $\cqg \in \cqC^r([0,T],\cqX)$ satisfy $\cqg(0)=\cqg'(0)=...=\cqg^{(r-1)}(0)=0$. Then, the following error bound holds at $t_n=n\tau\in[0,T]$:
	\begin{multline*}
	\left\|  \bigl(\cqK( \partial_t^\tau) \boldsymb g \bigr)_{n}-(\cqK(\partial_t)\cqg)(t_{n}) \right\|_{\cqY}
		\\ \, \le \, 
		C\, M_{1/T}\,\tau^{\min(2m-1,m+1-\kappa+\nu)}
		\left(\|{\cqg^{(r)}(0)}\|_{\cqX}+\int_0^t\|{\cqg^{(r+1)}(t')}\|_{\cqX} \,\mathrm{d}t'
		\right) \, .
	\end{multline*}
	The constant C is independent of $\tau$ and $\cqg$ and $M_\sigma$ of \eqref{eq:pol_bound}, but depends on the exponents $\kappa$ and $\nu$ in \eqref{eq:pol_bound} and on the final time $T$.
\end{lem}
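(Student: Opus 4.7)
The plan is to reduce the error analysis to a contour integral in the Laplace domain and then control the difference between the exact and the discrete symbols. The exact convolution admits the inverse Laplace transform representation
\begin{align*}
(\cqK(\partial_t)\cqg)(t_n) \, = \, \frac{1}{2\pi\rmi}\int_{\sigma+\rmi\R}\rme^{s t_n}\cqK(s)\widehat{\cqg}(s)\, ds\, ,
\end{align*}
while the Radau IIA convolution quadrature output, through the generating function $\cqK(\Delta(\zeta)/\tau)$, can after the substitution $\zeta = \rme^{-s\tau}$ (legitimate because $c_m=1$ and $R(\infty)=0$) be rewritten in the same form with $\cqK(s)$ replaced by the discrete symbol $\cqK_\tau(s) := \cqK(\Delta(\rme^{-s\tau})/\tau)$. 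Subtracting the two representations reduces the problem to the single contour integral
\begin{align*}
\bigl(\cqK(\partial_t^\tau)\cqg\bigr)_n-(\cqK(\partial_t)\cqg)(t_n) \, = \, \frac{1}{2\pi\rmi}\int_{\sigma+\rmi\R}\rme^{s t_n}\bigl[\cqK_\tau(s)-\cqK(s)\bigr]\widehat{\cqg}(s)\, ds\,.
\end{align*}

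The second step is to bound $\cqK_\tau(s)-\cqK(s)$ in operator norm in two separate regimes. For small $|s\tau|$, a Taylor expansion of $\Delta(\rme^{-s\tau})/\tau$ around $s$ combined with the classical order $2m-1$ of the Radau IIA method produces a bound of order $\tau^{2m-1}|s|^{2m-1+\kappa}/(\real s)^\nu$. For large $|s\tau|$ the Taylor expansion fails, and one instead exploits the stage order $m$ of the method together with the resolvent-like boundedness of $\Delta(\zeta)/\tau$, which is well-defined since A-stability ensures that $\Delta(\zeta)$ has spectrum with positive real part inside the unit disk; this produces a weaker bound of order $\tau^{m+1-\kappa+\nu}$ times a suitable power of $|s|$. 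Combining both regimes gives a uniform estimate with the factor $\tau^{\min(2m-1,\,m+1-\kappa+\nu)}$.

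The third step is to move the remaining polynomial $|s|$-growth off the symbol and onto $\widehat{\cqg}(s)$ by integration by parts in the time variable. Under the assumption $\cqg(0)=\cqg'(0)=\cdots=\cqg^{(r-1)}(0)=0$, one has $s^r\widehat{\cqg}(s)=\widehat{\cqg^{(r)}}(s)$, and one further integration yields $s^{r+1}\widehat{\cqg}(s)=\cqg^{(r)}(0)+\widehat{\cqg^{(r+1)}}(s)$. The requirement $r>\max(2m+\kappa,\,2m-1,\,m+1)$ is exactly what is needed for the resulting contour integral to be absolutely convergent no matter which regime dominates the symbol bound. Evaluating along $\sigma=1/T$ turns the Plancherel weight $\rme^{\sigma t_n}$ into a bounded constant on $[0,T]$ and converts $M_\sigma$ into $M_{1/T}$; the two remaining contributions yield exactly the sum $\|\cqg^{(r)}(0)\|_\cqX + \int_0^t\|\cqg^{(r+1)}(t')\|_\cqX\, dt'$.

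The hardest part will be the sharp control of $\cqK_\tau(s)-\cqK(s)$ across both frequency regimes with explicit tracking of the powers of $|s|$ and $\real s$; this is the very mechanism behind the order reduction encoded in the $\min$ of the two exponents. A related subtlety is making sure that the substitution $\zeta=\rme^{-s\tau}$ keeps the entire vertical contour inside the domain of analyticity of the discrete symbol, which is secured by the A-stability of Radau IIA together with the analyticity of $\cqK(s)$ for $\real s>0$ asserted by \eqref{eq:pol_bound}.
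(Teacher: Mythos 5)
The paper does not prove this lemma at all: it is quoted verbatim from \cite[Thm.~3]{BLM11} (specialized to Radau IIA and to a Hilbert/Banach space setting), so there is no in-paper argument to compare against. Judged against the argument in that reference, your outline identifies the right ingredients --- a Laplace-domain contour representation, a two-regime (small versus large $|s\tau|$) estimate of the symbol error producing the exponent $\min(2m-1,\,m+1-\kappa+\nu)$, and integration by parts using $\cqg(0)=\dots=\cqg^{(r-1)}(0)=0$ to convert $|s|$-growth into the data term $\|\cqg^{(r)}(0)\|_{\cqX}+\int_0^t\|\cqg^{(r+1)}\|_{\cqX}$.

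There is, however, a genuine gap at your very first step. The identity
$\bigl(\cqK(\underline{\partial_t^\tau})\cqg\bigr)^n=\frac{1}{2\pi\rmi}\int_{\sigma+\rmi\R}\rme^{st_n}\cqK(\Delta(\rme^{-s\tau})/\tau)\,\widehat{\cqg}(s)\,\dif s$ is not correct as stated: by Cauchy's formula the discrete convolution pairs the generating function $\cqK(\Delta(\zeta)/\tau)$ with the generating function $\sum_j \cqg(\underline{t_j})\zeta^j$ of the \emph{sampled} sequence, and under $\zeta=\rme^{-s\tau}$ this is (by Poisson summation) an aliased sum $\sum_{\ell\in\field{Z}}$ of shifted transforms $\widehat{\cqg}(s+2\pi\rmi\ell/\tau)$ over the bounded strip $|\imag s|\le\pi/\tau$, not $\widehat{\cqg}(s)$ over the full line. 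One must either control these aliasing contributions explicitly or, as is done in \cite{BLM11}, reduce by superposition to exponential inputs $\rme^{\lambda t}$, for which both the exact and the discrete convolutions can be evaluated in closed form and the error is expressed through the error of the Runge--Kutta method for the linear test equation; the stage order $m$ and classical order $2m-1$ of Radau IIA then enter through that ODE error, not through a Taylor expansion of the symbol alone. Relatedly, your second step --- the actual two-regime operator bound on the symbol discrepancy, where $\kappa$, $\nu$, A-stability and the spectrum of $\Delta(\zeta)$ must all be tracked --- is asserted rather than carried out, and you correctly flag it as the hardest part; as it stands the proposal is a plausible roadmap to the cited theorem rather than a proof.
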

In the next remark we comment on the selection of the Radau IIA method for the convolution quadrature.
\begin{rem}
Runge--Kutta convolution quadrature methods such as those based on the Radau IIA methods (see \cite[Sec.~IV.5]{HairerWannerII}), often enjoy more favourable properties than their multistep-based counterparts, which cannot exceed order~2. Recently, Runge--Kutta convolution quadrature approximations based on Gau\ss~methods have been analyzed in \cite{BF22}. 
However, note that the domain derivative in the frequency domain at a point $\boldsymbol z\in \Omega$ decays exponentially fast with respect to $\real s$ (see Equation~\eqref{eq:estpw}). As a consequence, Lemma \ref{lem:RK-CQ} implies convergence rates at the full classical order, which makes the Radau IIA based convolution quadrature schemes the ideal candidates for the present topic. Other stiffly accurate A-stable methods such as the Lobatto IIIC method would fulfill similar properties, but offer no benefit in return for their lower classical order $2m-2$.
\end{rem}
\subsection{The time-discrete domain derivative}
We apply convolution quadrature to the present scattering problems, starting with the initial value problem \eqref{eq:scatwave-1}--\eqref{eq:scatwave-2}. Discretizing the temporal convolution \eqref{eq:SV-1-td} yields the approximation
\begin{align*}
	u_\tau\,=\, -\Sop(\partial^\tau_t)\Vop^{-1}(\underline{\partial^\tau_t}) u^i\, .
\end{align*}
We use a similar notation as in \eqref{eq:FGamma} and define $X_\tau : \Gamma \mapsto (u_\tau(\bfz_j))_{j=1,\dots,M}$ and
\begin{align}\label{eq:defF}
	F_{\Gamma,\tau} : D(F_\Gamma)\subset C^1(\Gamma,\R^d) \to (\ell^2_\tau(0,N))^M \, , \qquad F_{\Gamma,\tau}(\bfh) \, := \, X_\tau(\Gamma_h)\, .
\end{align}
In particular, we note that $F_{\Gamma,\tau}(0)$ is the sequence ($u_\tau(\bfz_j))_{j=1,\dots,M}$, whose elements approximate the scattered wave $u$ from \eqref{eq:scatwave-1}--\eqref{eq:scatwave-2} evaluated at the spatial points $\bfz_j$, $j=1,\dots,M$ and at the equidistant time points $t_n$ for $n=1,\dots,N$ (which are the $m$-th components of the stages $\underline{t_n}$ for $n=0,\dots,N-1$).
Here, we use the notation $\ell^2_\tau (0,N)=\mathbb R^{N}$ for the space of finite sequences endowed with the norm (see \cite{BL19})
\begin{align}\label{eq:seq-norm}
	\|u_\tau (\bfz)\|^2_{\ell^2_\tau(0,N)}
	\, := \, \tau\sum_{n=1}^N |(u_\tau)_n(\bfz)|^2 \, .
\end{align}
This norm is a time-discrete pendant to the $L^2(0,T)$-norm, which scales appropriately with the finite time $T$. More generally, we write $\ell^2_\tau(0,N;V)$ for an arbitrary Hilbert space $V$, where the absolute value $|\cdot|$ in \eqref{eq:seq-norm} is exchanged by the appropriate norm $\|\cdot\|_V$.
Note that by applying Lemma~\ref{lem:RK-CQ} to the estimates \eqref{bound-V-m-1} and \eqref{eq:pwbound}, we obtain the pointwise error estimate for the scattering problem 
\begin{align}\label{eq:pwboundstd}
| F_{\Gamma,\tau}(0)_n-F_\Gamma(0)(t_n) |
\, \le \, C\tau^{2m-1} \left\Vert \gamma u^i\right\Vert_{H_0^{\widetilde{r}}(0,T; H^{1/2}(\Gamma))}\, \quad \text{for } \widetilde{r}\, >\, 2m+3+\frac{d}{2}
\end{align}
(see also \cite[Thm.\@ 4]{BLM11}).
The requirement on the regularity of $\gamma u^i$ on the right hand side of \eqref{eq:pwboundstd} can be seen by using $\kappa = (3+d)/2$, arbitrary $\nu>0$ and $r\in \N$ with $r  >  2m+\kappa$ in Lemma \ref{lem:RK-CQ} and by noting that $H_0^{\widetilde{r}}(0,T;X) \subset C^{r+1}([0,T],X)$ for $\widetilde{r}>r+3/2$. 
Lower order error bounds can be derived under lower regularity assumptions on the incident wave $\gamma u^i$.
Again, we denote by $F_{\Gamma,\tau}'(0) : C^1(\Gamma,\R^d) \to \ell^2(0,N;\R^M)$ the Fr\'echet derivative of the operator $F_{\Gamma,\tau}$ at zero, i.e.
\begin{align*}
	\frac{1}{\left\Vert \bfh \right\Vert_{C^1(\Gamma)}}\left\Vert F_{\Gamma,\tau}(\bfh) - F_{\Gamma,\tau}(0) - F_{\Gamma,\tau}'(0)\bfh \right\Vert_{\ell^2(0,N;\R^M)} \, \to \, 0\, , \quad \text{as } \left\Vert \bfh \right\Vert_{C^1(\Gamma)} \to 0\, .
\end{align*}
The time-discrete domain derivative is characterized by the convolution quadrature discretization of the scattering problem, which is formulated in the following corollary.
\begin{cor}
Let $\left( \gamma u^i(\vtn) \right)_{n\ge 0}\in \ell^2_{\tau}(0,N ; H^{3/2}(\Gamma)^m).$ 
Then, the Fr\'echet derivative of the time-discrete scattering operator $F_{\Gamma,\tau}$ at zero from \eqref{eq:defF} is given by the convolution quadrature of the temporal convolution \eqref{eq:F_h-t} evaluated at $\bfz_j$, $j=1,\dots,M$, that is
\begin{align}\label{eq:F_h-t-tau}
(u_\tau'(\bfz_j))_{j=1,\dots,M} = 	((\mathcal{F}_{\bfh}(\partial^\tau_t)\gamma u^i)(\bfz_j))_{j=1,\dots,M}  \, = \, F_{\Gamma,\tau}'(0)\bfh \,.
\end{align}
\end{cor}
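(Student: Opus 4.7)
The plan is to lift the Laplace-domain Fr\'echet differentiability from Proposition~\ref{prop:th-dom-deriv-bounds} to the time-discrete CQ setting. This is natural because Runge--Kutta convolution quadrature is defined via the generating function $\widehat K(\Delta(\zeta)/\tau)$, so any quantitative pointwise-in-$s$ identity between polynomially bounded operator families transfers to an identity between the corresponding CQ discretizations, as long as the residual family still satisfies a bound of the form~\eqref{eq:pol_bound}.

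\textbf{Key steps.} First, I would fix $\bfh$ with $\left\Vert \bfh \right\Vert_{C^1(\Gamma)} < h_0$ and introduce the Laplace-domain residual
\[
\widehat{R}_\bfh(s) \, := \, \widehat{F}_\Gamma(\bfh)(s) - \widehat{F}_\Gamma(0)(s) - \widehat{F}_\Gamma'(0)\bfh\,(s)\,,
\]
where $\widehat{F}_\Gamma'(0)\bfh\,(s) = \bigl((\mathcal{F}_\bfh(s)\gamma \widehat u^i(s))(\bfz_j)\bigr)_{j=1,\dots,M}$ in view of~\eqref{eq:F_h-ui}. By the quantitative estimate~\eqref{eq:ddest}, this residual admits a bound
\[
|\widehat{R}_\bfh(s)| \, \le \, C(\sigma,d_{\mathrm{min}})\, e^{-d_{\mathrm{min}}\real s}\frac{|s|^{(d+5)/2}}{(\real s)^3}\left\Vert \widehat{u}^i \right\Vert_{H^1(D_1\setminus\overline{D})}\left\Vert \bfh \right\Vert_{C^1(\Gamma)}^2\,,
\]
uniformly in $\bfh$ and for $\real s \ge \sigma > 0$. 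In particular, $\widehat R_\bfh$ is a polynomially bounded analytic operator family in the sense of~\eqref{eq:pol_bound} with a constant proportional to $\left\Vert \bfh \right\Vert_{C^1(\Gamma)}^2$. Second, I would argue that the Runge--Kutta CQ discretization is linear in the operator family: the Taylor coefficient map $\widehat K \mapsto {\cqW}_n(\widehat K)$ appearing in the definition of~\eqref{rkcq} is additive in~$\widehat K$. Consequently, applying CQ to both sides of the Laplace-domain decomposition yields
\[
F_{\Gamma,\tau}(\bfh) - F_{\Gamma,\tau}(0) - \bigl((\mathcal{F}_{\bfh}(\partial_t^\tau)\gamma u^i)(\bfz_j)\bigr)_{j=1,\dots,M} \, = \, \widehat{R}_\bfh(\partial_t^\tau)\gamma u^i\,.
\]
Third, I would bound the right-hand side using the Plancherel-based CQ stability estimate recalled at the end of Section~\ref{subsec:Z}: choosing $\sigma = 1/T$ together with the explicit polynomial bound on $\widehat{R}_\bfh$ delivers
\[
\left\Vert \widehat{R}_\bfh(\partial_t^\tau)\gamma u^i \right\Vert_{\ell^2_\tau(0,N;\R^M)} \, \le \, C_T \left\Vert \gamma u^i \right\Vert_{H_0^{r_0}(0,T;H^{3/2}(\Gamma))}\left\Vert \bfh \right\Vert_{C^1(\Gamma)}^2\,,
\]
for a sufficiently large regularity index $r_0$ dictated by the exponents in the bound on $\widehat R_\bfh$ together with the regularity assumption on $\gamma u^i$ made in the corollary.

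\textbf{Conclusion and main obstacle.} Dividing by $\left\Vert \bfh \right\Vert_{C^1(\Gamma)}$ and passing to the limit $\left\Vert \bfh \right\Vert_{C^1(\Gamma)} \to 0$ yields the desired Fr\'echet differentiability of $F_{\Gamma,\tau}$ at zero together with the identification of its derivative as in~\eqref{eq:F_h-t-tau}. The main technical hurdle is to make precise the assertion that CQ commutes with the Fr\'echet-derivative decomposition in $\bfh$. While this is natural at the level of generating functions, one must justify that: (i)~the full nonlinear map $\bfh \mapsto \widehat F_\Gamma(\bfh)(s)$ gives rise to a CQ discretization that coincides with $F_{\Gamma,\tau}(\bfh)$ as defined by~\eqref{eq:defF}, and (ii)~the constants governing CQ stability for the residual family $\widehat R_\bfh(s)$ do not degenerate as $\bfh \to 0$. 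Point (ii) follows from the fact that~\eqref{eq:ddest} holds with constants independent of $\bfh$ in the range $\left\Vert \bfh \right\Vert_{C^1(\Gamma)} \le h_0$, so that the $\left\Vert \bfh \right\Vert_{C^1(\Gamma)}^2$-scaling is preserved through the discrete Plancherel estimate.
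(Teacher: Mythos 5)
Your proposal is correct and follows essentially the same route as the paper, whose entire proof is the one-line observation that the statement follows by applying the frequency-domain counterpart (Proposition~\ref{prop:th-dom-deriv-bounds}) to the generating function of $u_\tau'$. Your write-up is simply a detailed expansion of exactly that idea: the residual family $\widehat R_\bfh(s)$ inherits the $\Vert\bfh\Vert_{C^1(\Gamma)}^2$-scaled polynomial bound \eqref{eq:ddest} uniformly for $\Vert\bfh\Vert_{C^1(\Gamma)}\le h_0$, the CQ weights are additive in the symbol, and the discrete stability estimate then yields Fr\'echet differentiability of $F_{\Gamma,\tau}$ with the claimed derivative.
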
 
\begin{proof}
The statement is the direct consequence of applying the frequency domain counterpart, i.e., Proposition~\ref{prop:th-dom-deriv-bounds} to the generating function of $u'_\tau$.
\end{proof}
Analogous to the frequency domain and the time-continuous settings, we can formulate the time-discrete domain derivative $F_{\Gamma,\tau}'(0)\bfh$ as the evaluation of the solution $u_\tau'$ of a time-discrete scattering problem.
Finally, applying the general approximation result Lemma~\ref{lem:RK-CQ} with the Laplace domain bound of Proposition~\ref{prop:th-domain-derivative}, yields the following error estimate for the time-discrete domain derivative.

\begin{thm}\label{th:cq-conv}
Let $d\in\{2,3\}$, and $\gamma u^i\in  H_0^{\widetilde{r}}(0,T;H^{3/2}(\Gamma))$ with $\widetilde{r} >2m+(d+11)/2$. Consider the convolution quadrature semi-discretization \eqref{eq:F_h-t-tau} of the scattering problem equivalent to the temporal domain derivative, based on the $m$-stage Radau IIA method. Then, for any point $\bfz \in \Omega$ away from the boundary, we have the following estimate at $t_n = n\tau$:
\begin{align*}
	\left|(u_\tau')_n (\bfz) -u'(\bfz,t_n) \right|
	\, \le \,  C	\tau^{2m-1} \left\Vert \gamma u^i \right\Vert_{ H_0^{\widetilde{r}}(0,T;H^{3/2}(\Gamma))} .
\end{align*}
The constant $C$ depends polynomially on the final time $T$, the domain $\Gamma$ and inversely polynomial on the distance of $\bfz$ from the boundary $\Gamma$.
\end{thm}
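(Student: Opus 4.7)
The plan is to reduce Theorem~\ref{th:cq-conv} to a single application of Lemma~\ref{lem:RK-CQ} for the operator family $K(s) := \mathcal{F}_\bfh(s)\cdot(\bfz) : H^{3/2}(\Gamma) \to \C$. Using the convolution representation \eqref{eq:F_h-t} for the continuous domain derivative together with its semi-discrete counterpart \eqref{eq:F_h-t-tau}, and invoking the stiffly-accurate property $c_m = 1$ of the Radau IIA scheme to align $t_n$ with the last stage, the error at $t_n$ is exactly the convolution quadrature error for this symbol applied to the boundary data $\gamma u^i$. Thus it suffices to verify the hypotheses of Lemma~\ref{lem:RK-CQ} for $K(s)$.

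The next step is to establish the polynomial-type bound \eqref{eq:pol_bound} for this symbol. The pointwise estimate \eqref{eq:estpw} of Proposition~\ref{prop:th-domain-derivative} already delivers
\[
\|K(s)\|_{\C \leftarrow H^{3/2}(\Gamma)} \, \le \, C(\sigma,d_{\min})\, |s|^{(d+8)/2} \, e^{-d_{\min}\real s}\, , \quad d_{\min} = \mathrm{dist}(\bfz,\Gamma) > 0\, ,
\]
with a constant that is polynomial in $\sigma^{-1}$. The crucial manoeuvre, flagged in the remark preceding the proof, is to trade the exponential decay in $\real s$ for extra inverse powers $(\real s)^{-\nu}$. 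From the elementary inequality $x^\nu e^{-d_{\min} x} \le (\nu / (e\, d_{\min}))^\nu$ valid for all $x > 0$, $\nu \ge 0$, one obtains
\[
\|K(s)\|_{\C \leftarrow H^{3/2}(\Gamma)} \, \le \, C_\nu(\sigma,d_{\min})\, \frac{|s|^{(d+8)/2}}{(\real s)^\nu}
\]
for any $\nu \ge 0$. Choosing $\nu = m + (d+4)/2$ ensures $m + 1 - \kappa + \nu \ge 2m - 1$ with $\kappa = (d+8)/2$, so the minimum in Lemma~\ref{lem:RK-CQ} is attained at the full classical order $2m-1$.

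The last step is to translate the regularity hypotheses. With $\kappa = (d+8)/2$, Lemma~\ref{lem:RK-CQ} demands $r > \max(2m+\kappa, 2m-1, m+1) = 2m + (d+8)/2$, together with $\gamma u^i \in C^{r+1}([0,T];H^{3/2}(\Gamma))$ vanishing to order $r-1$ at $t=0$. Invoking the continuous embedding $H_0^{\widetilde{r}}(0,T;X) \hookrightarrow C^{r+1}([0,T];X)$, which holds whenever $\widetilde{r} > r + 3/2$, yields precisely the threshold $\widetilde{r} > 2m + (d+11)/2$ stated in the theorem. The integral term on the right-hand side of Lemma~\ref{lem:RK-CQ} is then controlled by $\|\gamma u^i\|_{H_0^{\widetilde{r}}(0,T;H^{3/2}(\Gamma))}$ up to a constant depending polynomially on $T$ and on $d_{\min}^{-1}$ through the choice of $\nu$.

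The main obstacle — and the only non-routine step — is the exponential-to-polynomial conversion: applied naively with $\nu = 0$, Lemma~\ref{lem:RK-CQ} would deliver only the reduced rate $\tau^{m + 1 - (d+8)/2}$, which for $d \in \{2,3\}$ and moderate $m$ is not even positive. The exponential decay of potential evaluations away from $\Gamma$, encoded in the factor $e^{-d_{\min}\real s}$ inherited from \eqref{eq:pwbound}, is precisely what permits inflating $\nu$ arbitrarily, at the acceptable cost of a constant that degrades as $\bfz$ approaches the boundary.
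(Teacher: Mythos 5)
Your proposal is correct and follows essentially the same route as the paper, which states the theorem as a direct application of Lemma~\ref{lem:RK-CQ} to the symbol $\mathcal{F}_\bfh(s)\cdot(\bfz)$ with the pointwise bound \eqref{eq:estpw}, relying (as explained in the remark preceding the theorem and in the discussion of \eqref{eq:pwboundstd}) on trading the factor $e^{-\mathrm{dist}(\bfz,\Gamma)\real s}$ for arbitrary inverse powers of $\real s$ to reach the full classical order $2m-1$, and on the embedding $H_0^{\widetilde r}(0,T;X)\subset C^{r+1}([0,T];X)$ for $\widetilde r>r+3/2$ to obtain the threshold $\widetilde r>2m+(d+11)/2$. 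Your write-up merely makes explicit the optimization $x^\nu e^{-d_{\min}x}\le(\nu/(e\,d_{\min}))^\nu$ and the choice $\nu=m+(d+4)/2$, which the paper leaves implicit.
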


\begin{rem}
Applying the other bounds of Proposition~\ref{prop:th-domain-derivative} yields the estimates
	\begin{align*}
	\left\Vert (u_\tau')_n -u'(\cdot,t_n)\right\Vert_{H^1(\Omega)}	 \, &\leq \, C	\tau^{m-3/2} \left\Vert \gamma u^i \right\Vert_{ H_0^{2m+6}(0,T;H^{3/2}(\Gamma))} \,  ,\\
	\left\Vert (u_\tau')_n -u'(\cdot,t_n)\right\Vert_{L^2(\Omega)}	 \, &\leq \, C	\tau^{m-1/2} \left\Vert \gamma u^i \right\Vert_{ H_0^{2m+5}(0,T;H^{3/2}(\Gamma))}.
\end{align*}
\end{rem}
\begin{rem} The above assumptions on the temporal regularity of the incoming wave in the above bounds is the consequence of the exponent of $|s|$ in the bound of Proposition~\ref{prop:th-domain-derivative}, in combination with Lemma~\ref{lem:RK-CQ}. To reduce these assumptions, estimates that avoid Laplace domain estimates could be attempted, but are beyond the scope of this paper. Approaches in the literature include the combination of time-dependent transmission problems and energy estimates (see \cite[Ch.\@ 7--9]{Say16} and \cite{N24}) and semigroup theory \cite{BR18}.
\end{rem}
\section{Numerical experiments for time-domain inverse scattering}

We address the inverse problem to reconstruct the boundary of a two-dimensional sound-soft scattering object $\Gamma$ from given measurements of the scattered field at spatial observation points $\bfz_\ell \in \Omega$ with $\ell = 1,\dots, M$. 
Let the measured time-discrete signal at the observation points $\bfz_\ell$ and for the times $t_n = \tau n$ with $\tau = T/N_t$, $N_t \in \N$, be denoted by $\bfg \in \ell^2(0,N_t; \R^M)$.

\subsection{Discretization, regularization and a reconstruction algorithm}

For our numerical experiments we restrict our considerations to star-shaped obstacles with some center point $\bfz\in \mathbb R^2$.
We parametrize the boundaries of these objects by cubic periodic splines.
For this purpose, let $Q\in \N$ and let $q_j = 2\pi (j-1) / Q$, $j=1,\dots,Q$.
For some $r_j > 0$, $j=1,\dots,Q$ and $ \bfz \in \R^2$ we define the points
\begin{align}\label{eq:param}
\bfP_j(r_j,\bfz) \, := \, r_j
\begin{bmatrix}
\cos(q_j) \\
\sin(q_j) \\
\end{bmatrix} + \bfz \quad \text{for all } j = 1, \dots, Q \, .
\end{align}
We define the partition
$\tri \, := \, \{q_j \, : \, j = 1,\dots,Q\} \subset [0,2\pi]$
and introduce the set
\begin{align*}
\Pcal_\tri \, := \, \bigg\{ \bfp \in C^2([0,2\pi],\R^2) \, : \, \begin{array}{l}
   \bfp \text{ cubic periodic vector valued spline interpol. } \bfP_j(r_j,\bfz)  \\
    \text{as in } \eqref{eq:param} \text{ for all } j \,,  \; \text{some } r_1,\dots,r_{Q}>0\, \; \text{and } \bfz \in \R^2
  \end{array}
\bigg\} \, .
\end{align*}
By $\Gamma_\bfp$ we denote the boundary of a two-dimensional domain $D$ that is parametrized by $\bfp \in \Pcal_\tri$.
We introduce the relative residual we aim to minimize via
\begin{align*}
f:\Pcal_\tri \to \R \, ,  \quad
f(\bfp) \, := \, \frac{\left\Vert F_{\Gamma_\bfp,\tau}(0) - \bfg \right\Vert_{\ell^2_\tau(0,N_t;\R^M)}^2}{\left\Vert \bfg \right\Vert_{\ell^2_\tau(0,N_t;\R^M)}^2} \, ,
\end{align*}
with the norm defined in \eqref{eq:seq-norm} and the semi-discrete operator $F_{\Gamma_\bfp,\tau}(0)=X_\tau(\Gamma_\bfp)$ from \eqref{eq:F_h-t-tau}.
We introduce two regularization terms $\Psi_1$ and $\Psi_2$ to stabilize the shape reconstruction. The first regularization term $\Psi_1$ penalizes strong curvature and reads
\begin{align*}
\Psi_1:\Pcal_\tri \to \R\, ,  \quad \Psi_1(\bfp) \, := \, \int_0^{2\pi} \kappa^2(\theta) |\bfp'(\theta)| \mathrm d\theta \, ,  \quad \kappa(\theta) \, := \, \frac{p_1'(\theta)p_2''(\theta)-p_2'(\theta)p_1''(\theta)}{|\bfp'(\theta)|^3} \, .
\end{align*}
The term $\kappa$ is the curvature of the curve parametrized by $\bfp \in \Pcal_\tri$ and thus, the term $\Psi_1(\bfp)$ describes the total curvature of $\Gamma_\bfp$.
Moreover, we introduce another penalty term $\Psi_2$, which shall keep the center point of the star domain $\bfz\in \R^2$ close to its geometric center and reads
\begin{align*}
\Psi_2:\Pcal_\tri \to \R\, ,  \quad \Psi_2(\bfp) \, := \, \left|  \bfp_{c} - \bfz\right|^2\, ,
\end{align*}
where $\bfp_{c}$ denotes the geometric center of the domain that $\Gamma_{\bfp}$ encloses.
We add the weighted regularization terms $\alpha_1^2\Psi_1$ and $\alpha_2^2\Psi_2$ to the aim functional $f$, which yields the penalized objective functional
\begin{align}\label{eq:freg}
f_{\text{reg}}: \Pcal_\tri \to \R\, \qquad  f_{\text{reg}}(\bfp) \, := \, f(\bfp) + \alpha_1^2\Psi_1(\bfp) + \alpha_2^2 \Psi_2(\bfp) \, ,
\end{align}
that we minimize using the Gau\ss--Newton method, i.e., we aim to find
\begin{align*}
\bfp^* \, = \, \argmin_{\bfp \in \Pcal_\tri} f_{\text{reg}}(\bfp) \, .
\end{align*}
\begin{rem}[Implementation of the full discretization]
    As a space discretization we use the simple averaging method explained in \cite{DomLuSay14} and \cite[Ch.\@ 7]{HasSay16} (see also the documentation of the deltaBEM package \cite{deltaBEM}). This yields a fully discretized version of the semi-discrete aim functional $f_{\text{reg}}$ from \eqref{eq:freg}.
We direct the reader to \cite[Sec.\@ 5.4]{BanSay22} for comprehensive insights regarding the implementation of Runge--Kutta convolution quadrature methods.
\end{rem}
\begin{rem}[Choosing directions for the Gauß--Newton method]
    For the Gauß--Newton iteration, we require a finite family of perturbations $\bfh_j$ for $j=1,\dots,J<\infty$.  Here, we use a standard choice of perturbations, which varies the interpolation points $\mathbf{P}_j$ of the spline for $j=1,\dots,Q$ in radial direction and complete the directions by shifts of the center $\mathbf z$. A similar reconstruction algorithm based on a Gau\ss--Newton approach is found in \cite{Kirsch93}.
\end{rem}

\subsection{Numerical results}

In this section we delve into numerical examples and explore various scenarios featuring different incident waves and measurement configuration. 
Using both of these, our purpose is to reconstruct the unknown scatterer using the Gau\ss--Newton algorithm.
In order to simulate the forward data $\bfg \in \ell^2(0,N_t; \R^M)$, we employ a 3-stage Radau IIA method and simulate a fully discretized version of \eqref{eq:SV-1-td}
with $N_s = 2\times 10^3$ spatial collocation points to discretize the boundary of the scattering object and $N_t=2\times 10^4$ time steps.
We fix the final time to be $T=20$, what results in the time step size $\tau = 1\times 10^{-3}$.
Note that for this simulation both numbers $N_s$ and $N_t$ are chosen significantly larger than in our reconstructions using the Gau\ss--Newton method.
For our reconstruction algorithm we also use the same Radau IIA method and stop the iteration when the euclidean norm of the update of the scatterer falls below a certain tolerance.
In the plots that accompany the examples, we first visualize the direct problem for three different time points.
The subsequent plots show the initial guess, an intermediate step and lastly, the final approximation of the reconstruction algorithm.
In all of these plots we find the boundary we aim to reconstruct in solid blue and the current iterate of the Gau\ss--Newton method in dashed red.
The positions of the receivers are indicated by black diamonds.
\begin{example}\label{ex:1}
\begin{figure}[t!]
\centering 
\includegraphics[scale=.36]{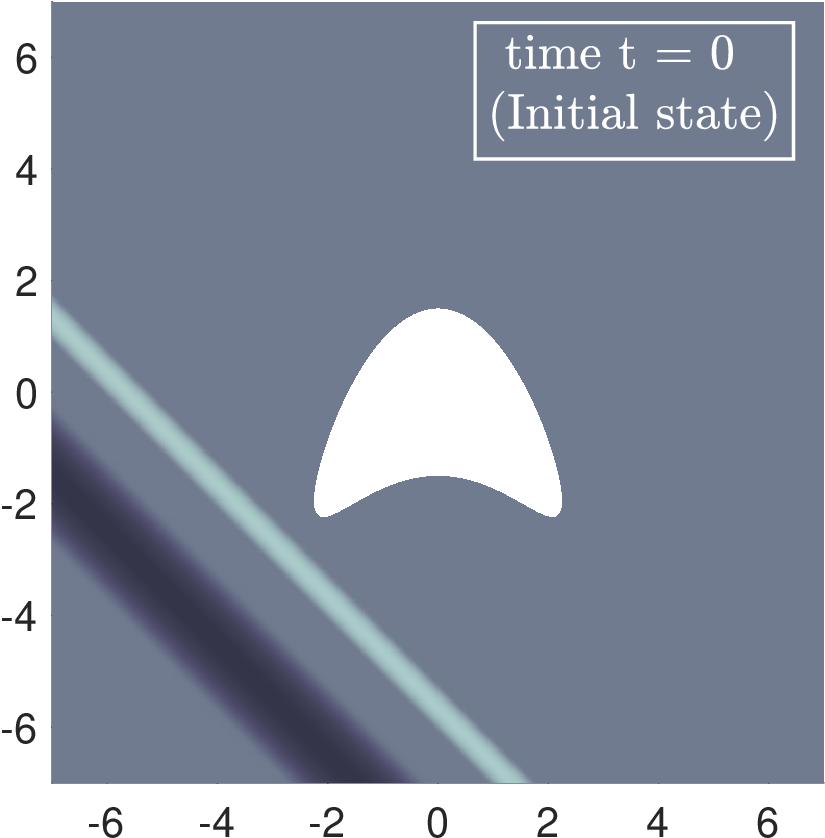} \hfill
\includegraphics[scale=.36]{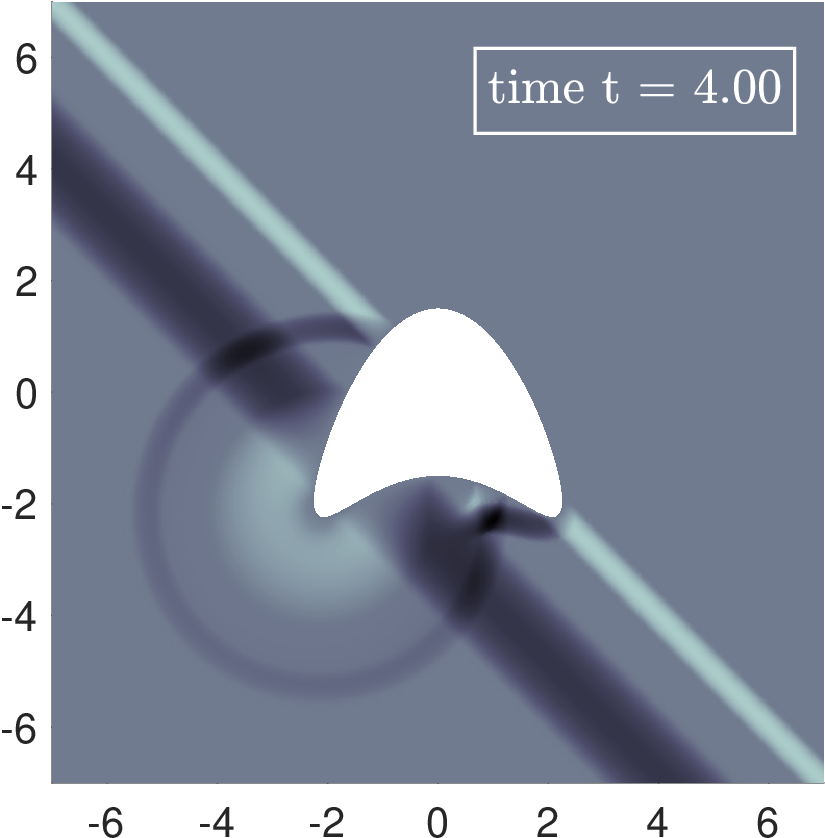} \hfill
\includegraphics[scale=.36]{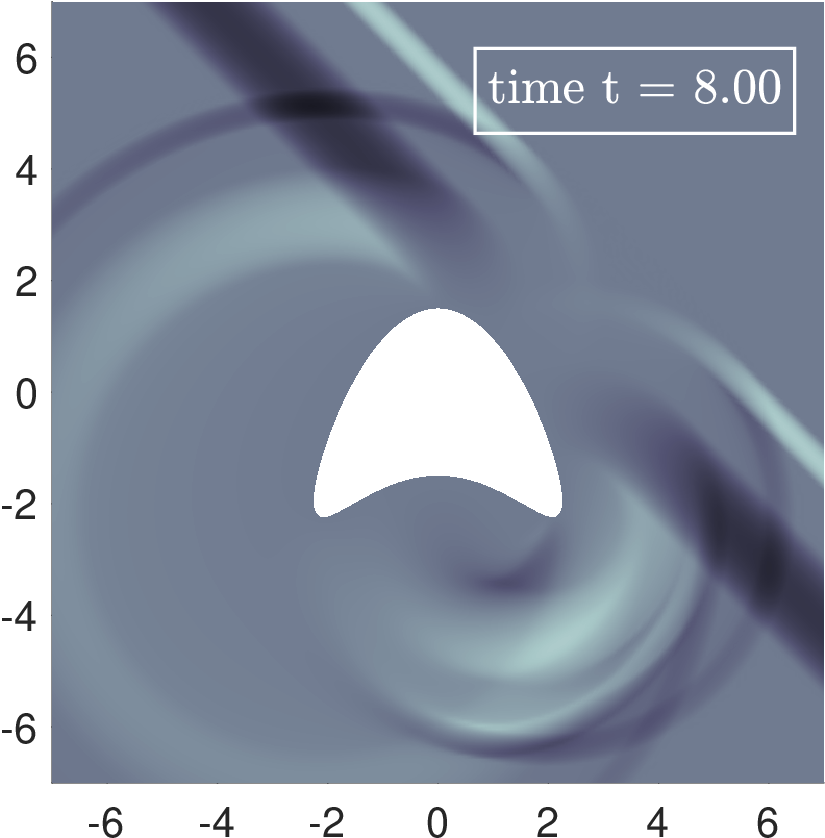}
\caption{Visualization of the scattering object with boundary parametrized by the curve from \eqref{eq:parex1}, together with the total wave $u + u^i$ corresponding to Example~\ref{ex:1} and Example~\ref{ex:2}. 
}
\label{fig:1}
\end{figure}

In our first numerical example let the boundary of the scattering object be given by the curve $\bfp = [p_1,p_2]^\top$ with 
\begin{align}\label{eq:parex1}
p_1(t) \, := \, -(1.5)^2 \sin(t)\, , \quad p_2(t) \, := \, 1.5\left(\cos(t) + 0.65 \cos(2t) - 0.65\right) \, , \; t\in [0,2\pi]\, .
\end{align}
Let $f\in C^\infty_c(\R)$ be defined by $f(t) := \rme^{-1/(1-t^2)}$ for $|t|<1$ and $f(t):=0$ for $|t|\geq 1$.
We define the incident wave $u^i$ by
\begin{align}\label{eq:uiex1}
u^i(\bfx,t) \, := \, f\left(3(\bfx\cdot \bfd - t+4)\right) - f\left(\bfx\cdot \bfd - t+6 \right)\, ,
\end{align}
with $\bfd := 1/\sqrt{2}[1,1]^\top \in S^1$. 
In Figure~\ref{fig:1} we visualize the total wave at three time points, namely at $t=0,4,8$.
For the initial state, i.e.\@, for $t=0$ only the incident wave $u^i(\bfx,0)$ from \eqref{eq:uiex1} is visible.
Then, the wave fronts propagate into the direction $\bfd$ and get scattered by the kite-shaped object.
This is seen in the middle and right plot of Figure~\ref{fig:1}.

We assume measurements of the scattered wave to be available at the spatial points defined by
$\bfz_j := 6[\cos(\theta_{j}), \, \sin(\theta_j)]^\top \in \R^2$ for $\theta_j = (j-1)\pi/4$ and $j = 1,\dots,8$.
In the Gau\ss--Newton method we use $Q=40$ points that are supposed to be interpolated by a cubic periodic spline.
We pick $N_s=1\times10^3$ points to evaluate this spline and use $N_t=5 \times 10^3$ points in time. This corresponds to the time step size $\tau = 4\times 10^{-3}$.
Furthermore, we
choose the regularization parameters $\alpha_1 = 0.02$ and $\alpha_2 = 0.5$.

We initialize the iteration with a circular initial guess having the radius $r=0.5$ and the center point at $\bfz = [-1,\; -1.5]^\top$.
Three snapshots from the convergence history can be found in Figure~\ref{fig:2}.
We find that the iteration stops after 17 steps, with the final approximation found in the right plot of Figure~\ref{fig:2}.
\begin{figure}[t!]
\centering 
\includegraphics[scale=.36]{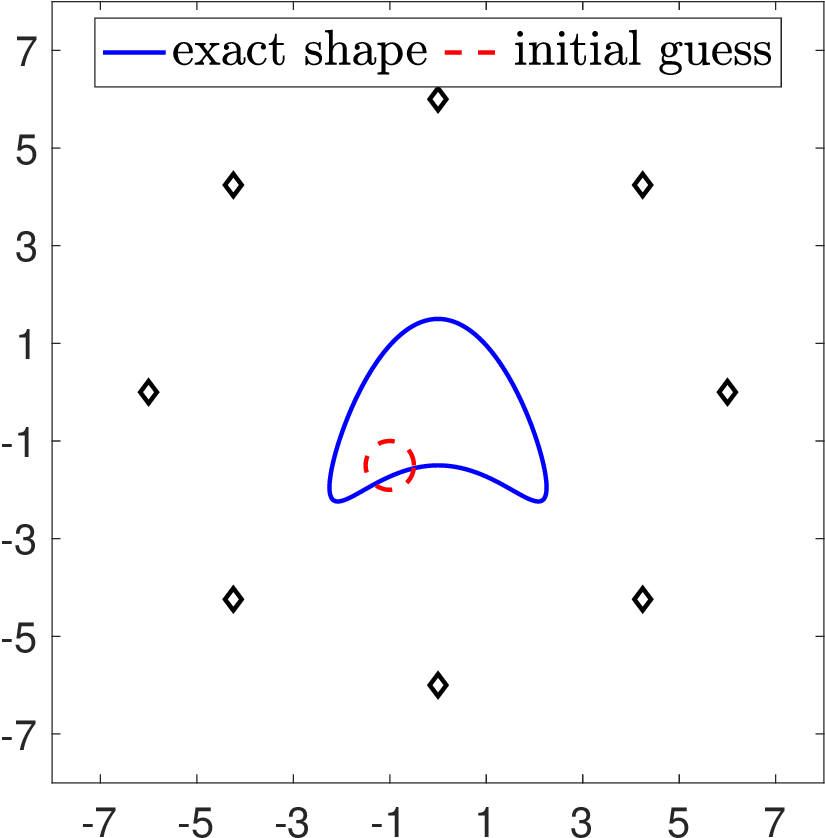}\hfill
\includegraphics[scale=.36]{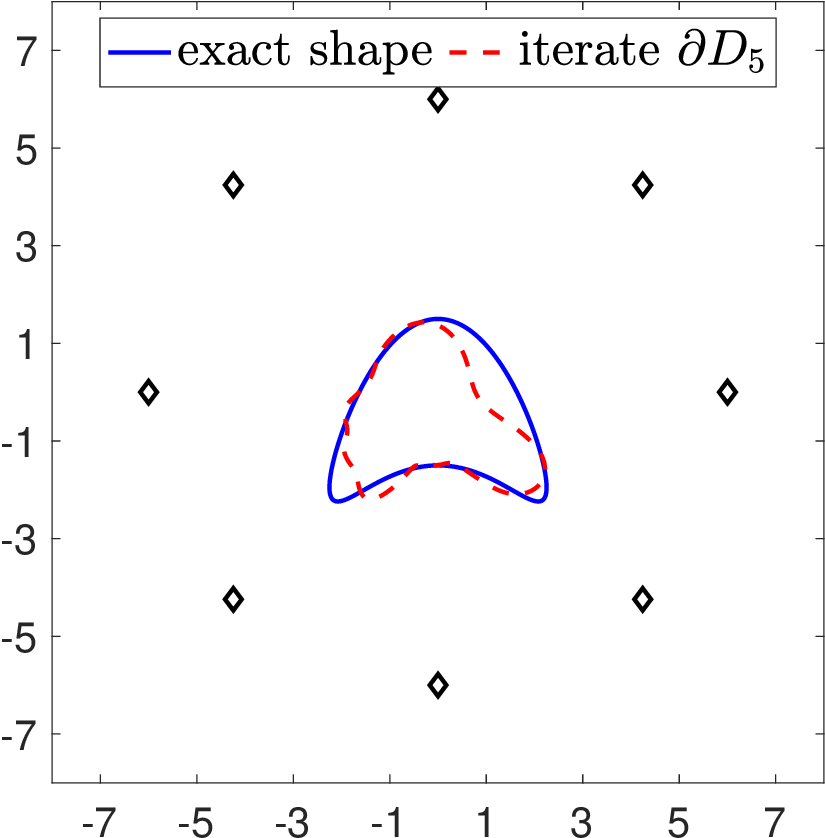} \hfill
\includegraphics[scale=.36]{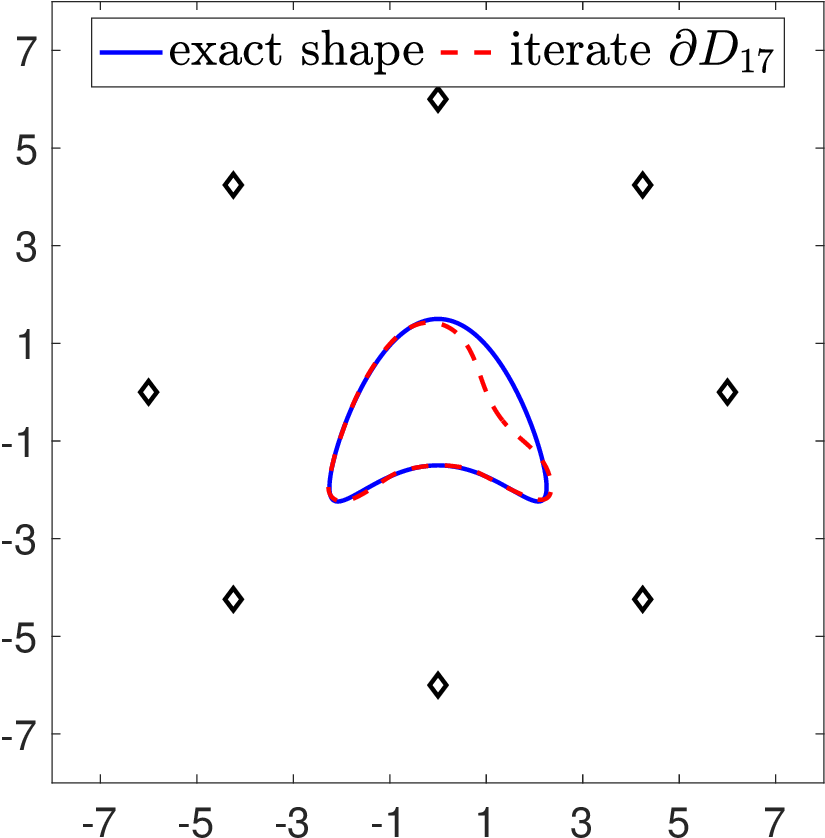}
\caption{Convergence history starting with a circle with radius $r=0.5$ and center point $\bfz = [-1, \; -1.5]^\top\in \R^2$ (left). The middle plot shows the iterate $\ell = 5$. The right plot shows the final result after 17 steps. }
\label{fig:2}
\end{figure}
The observation points $\bfz_j$, $j=1,\dots,8$ are 
located all around the scattering object and thus, only
the top-right contour - the part of the boundary that lies in the shadow of the incident wave - is somewhat off from the exact shape.
\begin{rem}
Using the geometrical setup from Example \ref{ex:1} and using an additional incident wave from the opposite angle may substantially improve the reconstruction quality.
\end{rem}
\end{example}
\begin{example}\label{ex:2}
In our second example, we study the same configuration for the unknown scattering object as in Example~\ref{ex:1} together with an incident wave $u^i$ of the form \eqref{eq:uiex1} with $\bfd := 1/\sqrt{2}[1,1]^\top \in S^1$. The forward scattering problem is also depicted by Figure~\ref{fig:1}. 
This time, we assume $M=4$ measurement points to be given by $\bfz_j := 6[\cos(\theta_{j}), \, \sin(\theta_j)]^\top \in \R^2$ for $\theta_j = \pi/4(j+3)$ and $j = 1,\dots,4$, i.e., the measurement points are distributed on the circular arc around the origin defined by the radius 6 and angle between $\pi$ and $7/4\pi$.
We start {the reconstruction algorithm} with the same initial guess as in Example~\ref{ex:1}, i.e.\@ we use the disc centered in $\bfz = [-1, \, -1.5]^\top$ having the radius $r=0.5$.
Moreover, we pick the regularization parameters $\alpha_1 = 0.06$ and $\alpha_2 = 0.5$.
The convergence history can be found in Figure~\ref{fig:3}. The reconstruction algorithm stops after 20 iterations
\begin{figure}[t!]
\centering 
\includegraphics[scale=.36]{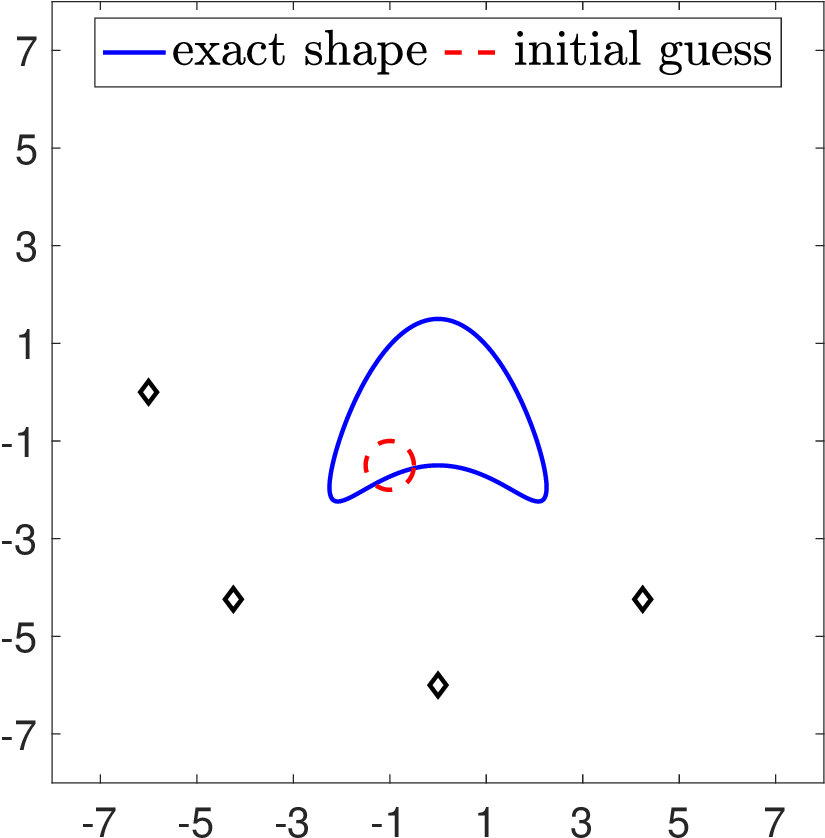}\hfill
\includegraphics[scale=.36]{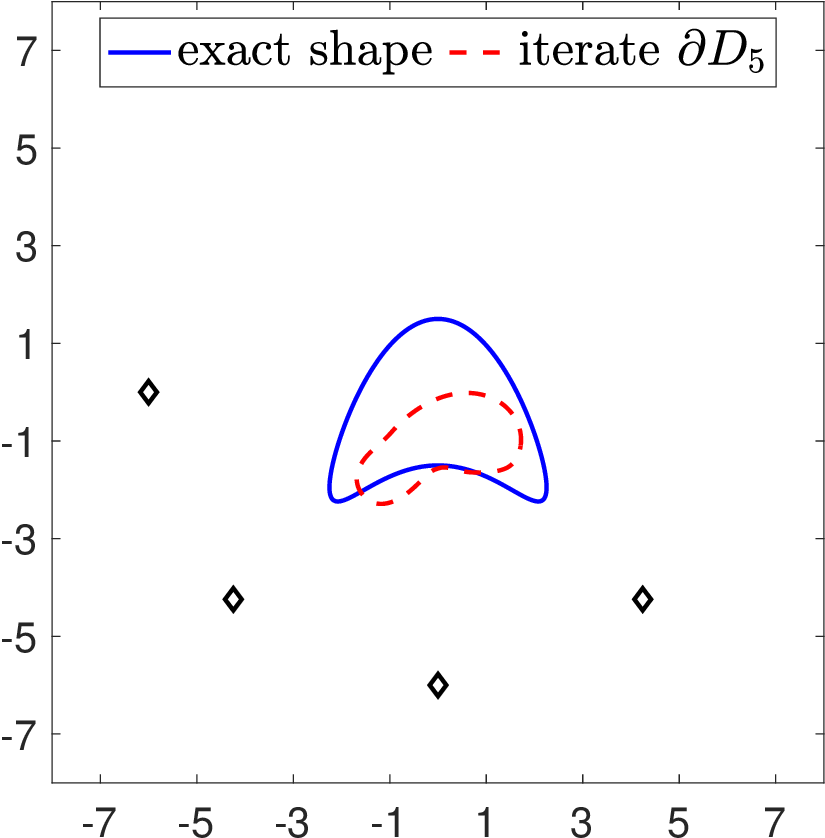}  \hfill
\includegraphics[scale=.36]{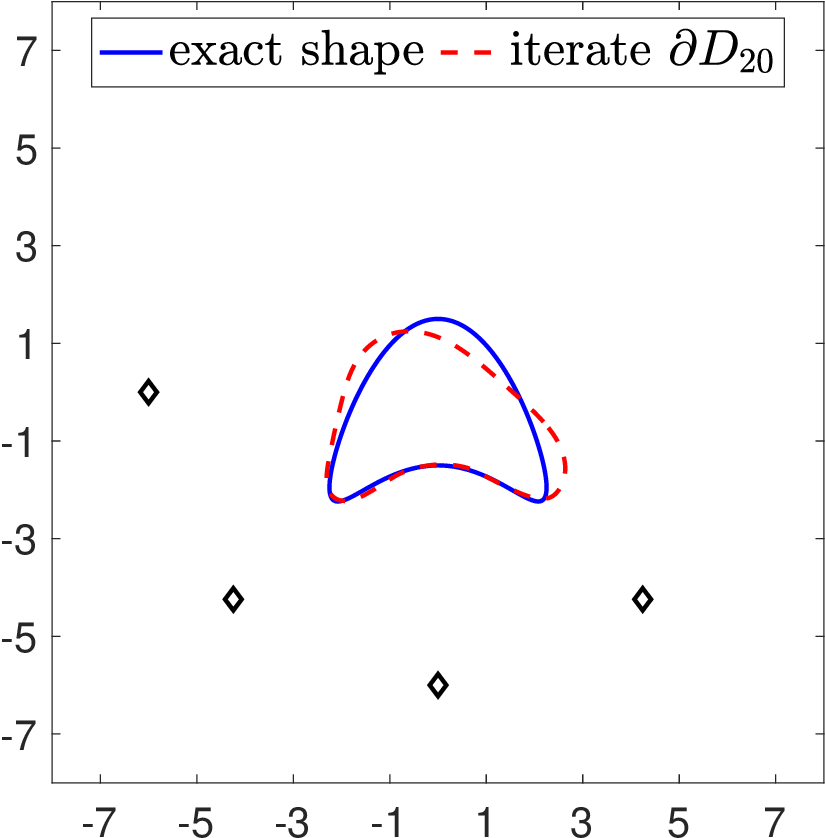}
\caption{Convergence history starting with a circle with radius $r=0.5$ and center point $\bfz = [-1, \; -1.5]^\top\in \R^2$ (left). The middle plot shows the iterate $\ell = 5$. The right plot shows the final result after 20 steps. }
\label{fig:3}
\end{figure}
and yields a good approximation of the bottom and bottom-left contour of the scattering object, which is the main contact surface of the wave front of the incident wave $u^i$.
We observed similar phenomena for different configurations of incident waves and geometrical setups.
The gathered backscattered data appears to be essential for a reasonable reconstruction of the surface, on which the incident wave impinges. Receivers that lie in the shadow of the obstacle do not seem to contribute substantially to an effective reconstruction. 
\end{example}

\begin{example}\label{ex:3}
In our third example we study a more complicated shape of the scattering object.
We define some points in the two-dimensional space, interpolate them using a cubic periodic spline and obtain a dove-shaped scatterer (see Figure \ref{fig:4}).
In this particular scenario, we study an incident wave formed by a superposition of emissions originated from source points located away from the scattering object, which also act as receivers. These emitters and receivers are denoted by $\bfz_j$ for $j=1,\dots,M$. Let $f(t) := \rme^{-1/(1-4(t-1)^2)}$ for $|t-1|<1/2$ and $f(t):=0$ for $|t-1|\geq 1/2$. We define the incoming wave by (see \cite[Eq.\@ 13]{Rie23})
\begin{align}\label{eq:uisource}
u^i(\bfx,t) \, = \, \sum_{j=1}^M \mathcal{H}\left( {t}{| \bfx-\bfz_j |^{-1}}-1\right) \int_0^{\acosh\bigl( \frac{t}{| \bfx-\bfz_j |}\bigr)} f\left( t-| \bfx - \bfz_j | \cosh(\theta) \right) \dtheta \, ,
\end{align}
where $\mathcal{H}$ denotes the Heaviside function. In our simulation, we choose $M=5$ and set the points as visualized in Figure~\ref{fig:5}.
In practice, we approximate the integral in \eqref{eq:uisource} by the built-in Matlab routine \textit{integral}. 
\begin{figure}[t!]
\centering 
\includegraphics[scale=.36]{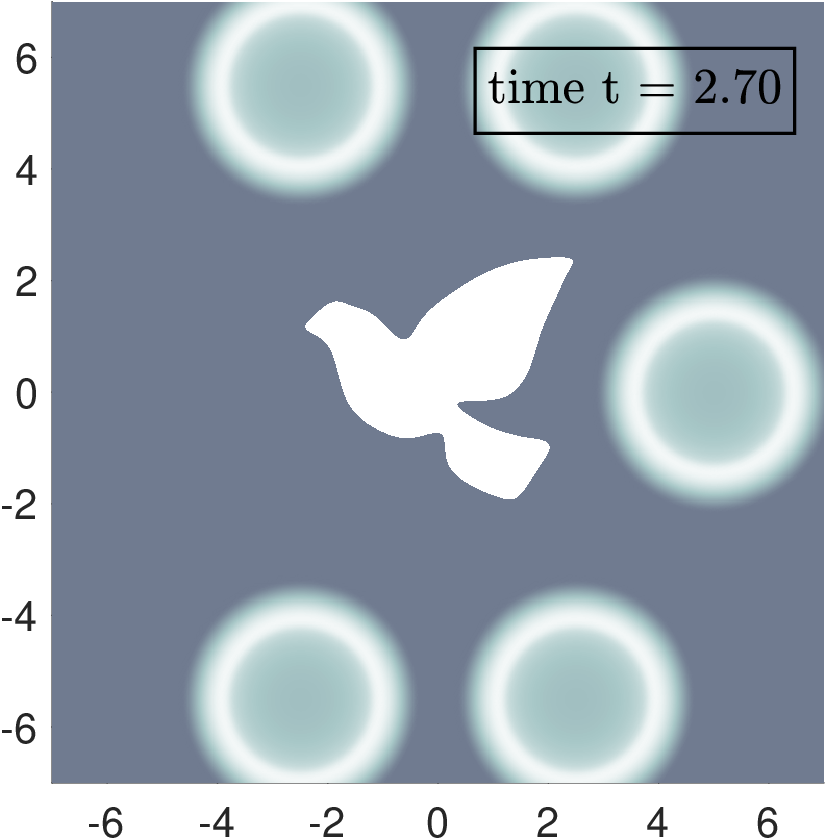} \hfill
\includegraphics[scale=.36]{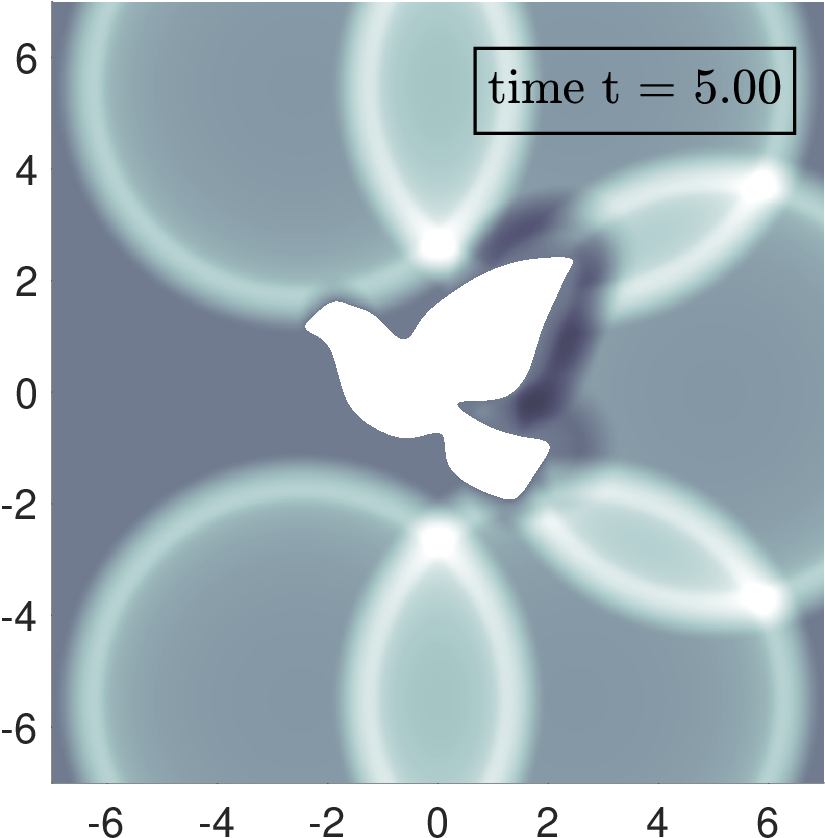} \hfill
\includegraphics[scale=.36]{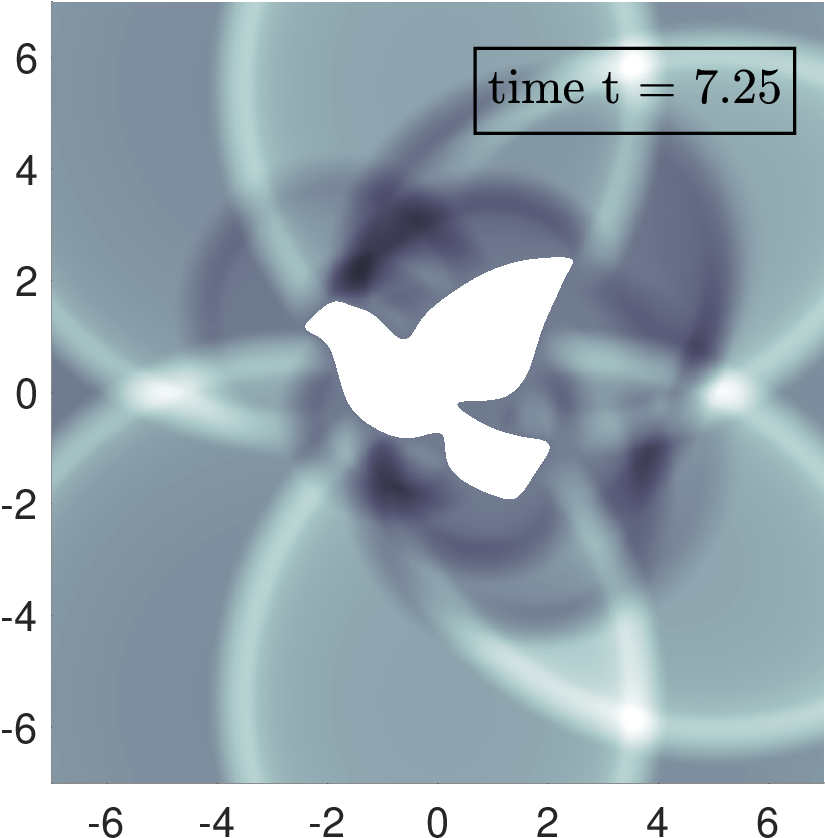}
\caption{Visualization of the dove-shaped scattering object together with the total wave $u + u^i$ of Example~\ref{ex:3} at different times. The incoming wave is zero for $t\leq 0.5$.}
\label{fig:4}
\end{figure}
In Figure~\ref{fig:4} we visualize the total wave at three time points, namely at $t=2.7,5,7.25$.
In the left plot only the incident wave $u^i(\bfx,2.7)$ from \eqref{eq:uisource} is visible.

In our reconstruction algorithm, we use $Q=40$ points, which we interpolate by a cubic periodic spline.
This spline is evaluated at $N_s=1\times 10^3$ spatial points, with $N_t=5\times 10^3$ time steps until the final time $T=20$.
We set the regularization parameters to $\alpha_1 = 0.02$ and $\alpha_2 = 0.06$.
In Figure \ref{fig:5} we find the results.
The algorithm stops after 16 steps with a final approximation that captures the overall shape of true scatterer. 
However, it is worth noting that the fine details from the actual shape of the scatterer are not reconstructed by the algorithm.
In view of the fact that the algorithm only supports star-shaped objects, we believe that the reconstruction is still reasonable.
\end{example}
We conclude this section with a remark about noisy data.
\begin{figure}[t!]
\centering 
\includegraphics[scale=.36]{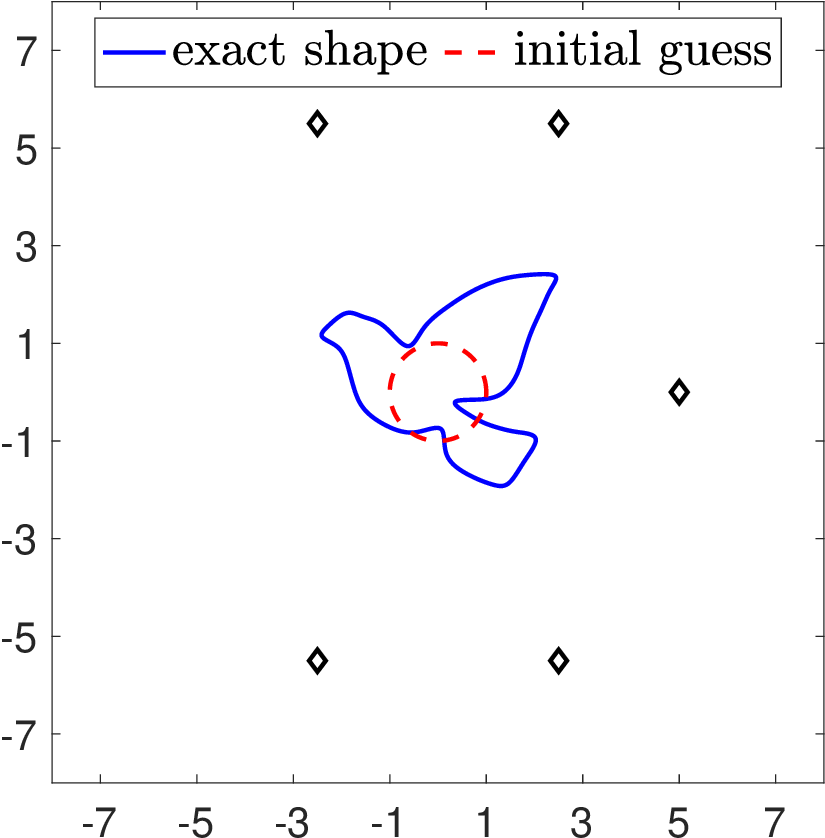}\hfill
\includegraphics[scale=.36]{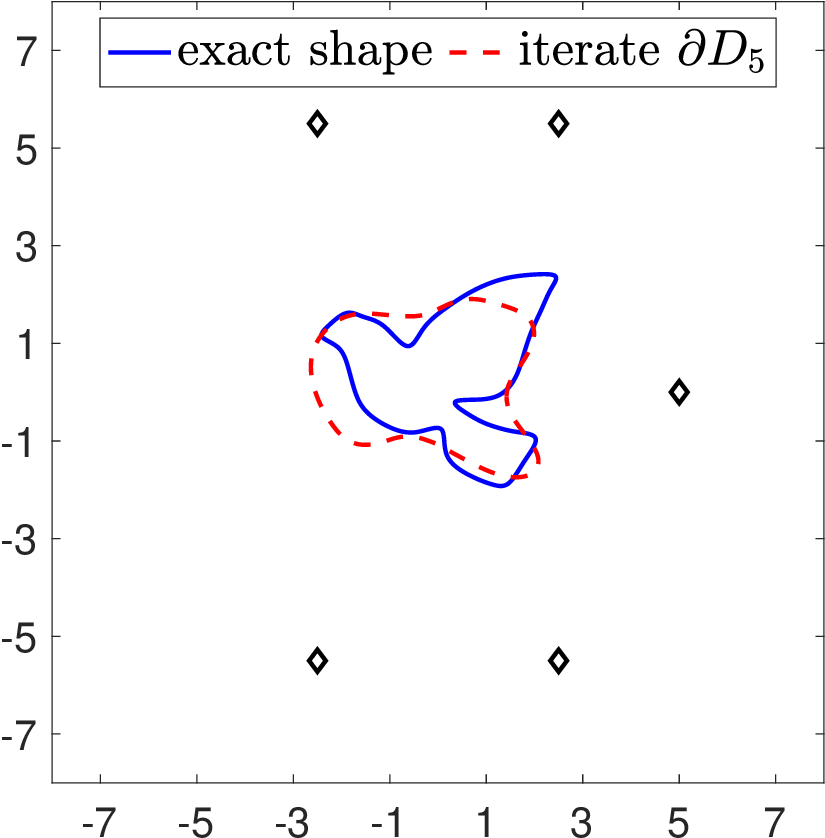} \hfill
\includegraphics[scale=.36]{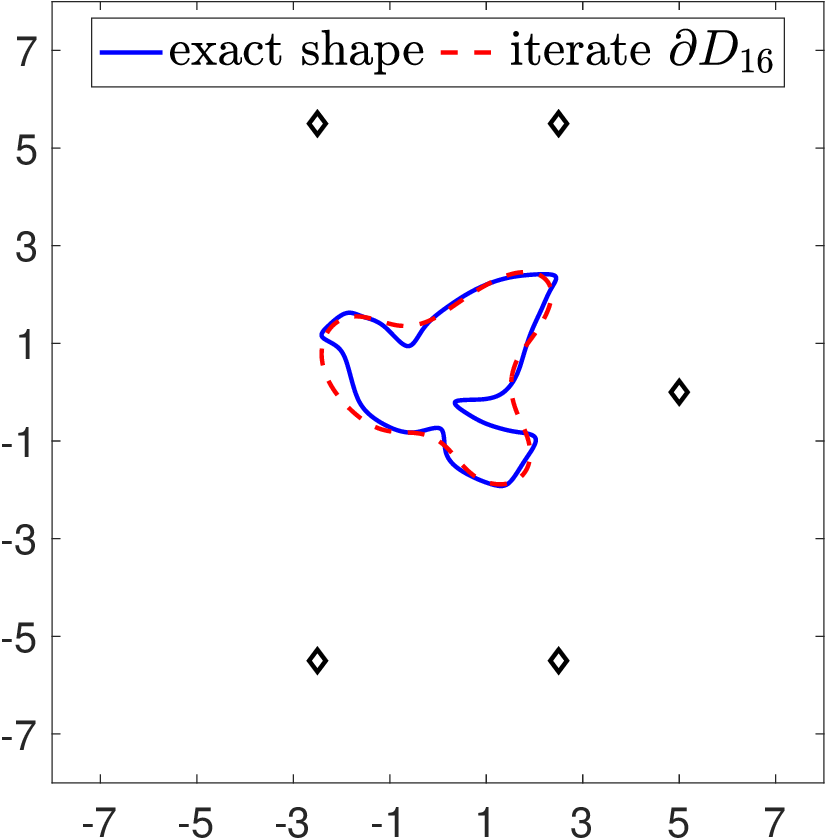}
\caption{Convergence history starting with a circle with radius $r=1$ and center point $\bfz = [0, \; 0]^\top\in \R^2$ (left). The middle plot shows the iterate $\ell = 5$. The right plot shows the final result after 16 steps.}
\label{fig:5}
\end{figure}
\begin{rem} 
The algorithm appears to be rather resistant to additional noise. 
For given data $\bfg\in \ell^2(0,N_t;\R^M)$ and a uniformly distributed random vector $\bfzeta \in \ell^2(0,N_t;\R^M)$ we simulated noisy data $\bfg^\delta\in \ell^2(0,N_t;\R^M)$ via
\begin{align*}
\bfg^\delta \, := \, \bfg + \delta \bfzeta \frac{\Vert \bfg \Vert_{\ell_\tau^2(0,N_t;\R^M)}}{\Vert \bfzeta \Vert_{\ell_\tau^2(0,N_t;\R^M)}}
\end{align*}
for $\delta = 0.3$. This corresponds to $30\%$ additional relative noise.
Then, we considered this data as the given measurement of the scattered wave and started the reconstruction algorithm.
Restarting Example \ref{ex:1}, Example \ref{ex:2} and Example \ref{ex:3} with this noisy data yields results, which are qualitatively the same as those that are found in Figure \ref{fig:2}, Figure \ref{fig:3} and Figure \ref{fig:5}. 
\end{rem}

\section*{Acknowledgment}
Funded by the Deutsche Forschungsgemeinschaft (DFG, German Research Foundation) - Project-ID 258734477 - SFB 1173. 

We would like to thank Roland Griesmaier and Christian Lubich for their remarks on improving this manuscript and for fruitful discussions. Furthermore, we thank Eliane Kummer and the anonymous referees for their valuable feedback and helpful comments.

\small
\bibliographystyle{abbrvurl}
\bibliography{references}
\end{document}